\newcommand{\Id}{\mathbbm{1}}
\newcommand{\LL}{\mathcal{L}}
\newcommand{\Or}{\mathcal{O}}
\newcommand{\E}{\mathbbm{E}}
\newcommand{\e}{\varepsilon}
\newcommand{\I}{{\rm i}}
\newcommand{\R}{\mathbb{R}}
\newcommand{\N}{\mathbb{N}}
\newcommand{\Z}{\mathbb{Z}}
\renewcommand{\Re}{\operatorname{Re}}
\numberwithin{equation}{section}
\DeclareMathOperator*{\Tr}{tr}
\DeclareMathOperator*{\Cov}{Cov}
\DeclareMathOperator{\Ai}{Ai}
\DeclareMathOperator{\Pb}{\mathbbm{P}}
\newtheorem{prop}{Proposition}[section]
\newtheorem{thm}[prop]{Theorem}
\newtheorem{lem}[prop]{Lemma}
\newtheorem{cor}[prop]{Corollary}
\newtheorem{rem}[prop]{Remark}
\title{On the exponent governing the correlation decay of the Airy$_1$ process}
\author{Riddhipratim Basu\thanks{International Centre for Theoretical Sciences, Tata Institute of Fundamental
Research, Bangalore, India; E-mail:~\texttt{rbasu@icts.res.in}} \and Ofer Busani \thanks{Institute for Applied Mathematics, University of Bonn, Endenicher Allee 60,\newline 53115 Bonn, Germany; E-mail:~\texttt{busani@iam.uni-bonn.de}} \and Patrik L. Ferrari\thanks{Institute for Applied Mathematics, University of Bonn, Endenicher Allee 60,\newline 53115 Bonn, Germany; E-mail:~\texttt{ferrari@uni-bonn.de}}}
\date{June 15, 2022}
\begin{document}
	\sloppy
	\maketitle
	
\begin{abstract}
We study the decay of the covariance of the Airy$_1$ process, ${\cal A}_1$, a stationary stochastic process on $\R$ that arises as a universal scaling limit in the Kardar-Parisi-Zhang (KPZ) universality class. We show that the decay is super-exponential and determine the leading order term in the exponent by showing that $\Cov({\cal A}_1(0),{\cal A}_1(u))= e^{-(\frac{4}{3}+o(1))u^3}$ as $u\to\infty$. The proof employs a combination of probabilistic techniques and integrable probability estimates. The upper bound uses the connection of ${\cal A}_1$ to planar exponential last passage percolation and several new results on the geometry of point-to-line geodesics in the latter model which are of independent interest; while the lower bound is primarily analytic, using the Fredholm determinant expressions for the two point function of the Airy$_1$ process together with the FKG inequality.
\end{abstract}

\section{Introduction and the main result}\label{sectIntro}
The one-dimensional Kardar-Parisi-Zhang (KPZ) universality class~\cite{KPZ86} of stochastic growth models has received a lot of attention in recent years, see e.g.~the surveys and lecture notes~\cite{FS10,Cor11,QS15,BG12,Qua11,Fer10b,Tak16,Zyg18}. Two of the most studied models in this class are the exponential/geometric last passage percolation (LPP) and the totally asymmetric simple exclusion process (TASEP). In both cases, one can define a height function $h(x,t)$, where $x$ stands for space (one-dimensional in our case) and $t$ for time.

At a large time $t$, under the $2/3-1/3$ scaling, one expects to see a non-trivial limit process. To illustrate it, consider the scaling around the origin
\begin{equation}
h_t^{\rm resc}(u)=\frac{h(u t^{2/3},t)-t h_{\rm ma}(u t^{-1/3})}{t^{1/3}}
\end{equation}
with $h_{\rm ma}(\xi)=\lim_{t\to\infty} t^{-1} h(\xi t)$ being the (deterministic) macroscopic limit shape.

The limit process depends on the geometry of the initial condition. One natural initial condition is the stationary one and the limit process in this case, called Airy$_{\rm stat}$, has been determined in~\cite{BFP09}. For non-random initial conditions, the two main cases are:
\begin{enumerate}
\item[(a)] curved limit shape $h_{\rm ma}$: one expects the weak limit $\lim_{t\to\infty} h^{\rm resc}_t(u)=a_1 {\cal A}_2(a_2 u)$, with ${\cal A}_2$ being the Airy$_2$ process~\cite{PS02} and $a_1,a_2$ are model-dependent parameters (see~\cite{PS02,Jo03b,BF07} for LPP and TASEP setting and~\cite{Dim20} for a non-determinantal case),
\item[(b)] flat limit shape $h_{\rm ma}$: one expects the weak limit $\lim_{t\to\infty} h^{\rm resc}_t(u)=a_1' {\cal A}_1(a_2' u)$, with ${\cal A}_1$ being the Airy$_1$ process~\cite{Sas05}, with again $a_1',a_2'$ model-dependent parameters (see~\cite{Sas05,BFPS06,BFP06}).
\end{enumerate}

As universal limit objects in the KPZ universality class, the $Airy_{\rm stat}$, as well as ${\cal A}_1$ and ${\cal A}_2$ (which also are stationary stochastic processes in $\R$) have attracted much attention. It is known that the one point marginal for ${\cal A}_2$ is the GUE Tracy-Widom distribution from random matrix theory~\cite{PS02}, whereas the one point marginal for ${\cal A}_1$ is a scalar multiple of the GOE Tracy-Widom distribution~\cite{Sas05,BFPS06}. The next fundamental question is naturally to understand the two point functions for these processes. Although there are explicit formulae available for the multi-point distributions, extracting asymptotics from these complicated formulae is non-trivial. Widom in~\cite{Wid03} (see also~\cite{aVm03} for a conditional result) proved that
\begin{equation}
{\rm Cov}({\cal A}_2(0),{\cal A}_2(u))=2 u^{-2}+\Or(u^{-4})\textrm{ as }u\to\infty.
\end{equation}
Although algebraically there are many similarities between the processes ${\cal A}_1$ and ${\cal A}_{2}$ (see the review~\cite{Fer07}), the method used in~\cite{Wid03} can not be directly applied to the case of the Airy$_1$ process, and the question of understanding the decay of correlations in the Airy$_1$ process had remained open until now.

A numerical study~\cite{BFP08} clearly showed that the decay of the covariance for the Airy$_1$ process is very different from that of Airy$_2$, in that it decays super-exponentially fast, i.e., $-\ln \Cov({\cal A}_1(0),{\cal A}_1(u)) \sim u^{\delta}$ for some $\delta>1$. Unfortunately, the numerical data of~\cite{BFP08} are coming from a Matlab program developed in~\cite{Born08} and uses the 10-digits machine precision. From the data it was not possible to conjecture the true value of $\delta$.

The reason behind the difference in the decay of the covariances of ${\cal A}_2$ and ${\cal A}_1$ can be explained as follows. In the curved limit shape situation, the space-time regions which essentially determine the values of $h(0,t)$ and $h(u t^{2/3},t)$ have an intersection whose size decays polynomially in $u$. In contrast, for the flat limit shape, except on a set whose probability goes to zero super-exponentially fast in $u$, these regions are disjoint.

The goal of this paper is to prove that the decay of covariance for the Airy$_1$ process is super-exponential with $\delta=3$. More precisely, we prove upper and lower bounds of the covariance where exponents have a matching leading order term. The following theorem is the main result of this paper.

\begin{thm}\label{ThmMain}
There exist constants $c,c'>0$ such that for $u>1$
\begin{equation}
e^{-c u \ln(u)} e^{-\frac43 u^3}\leq \Cov({\cal A}_1(0),{\cal A}_1(u)) \leq e^{c'u^2}e^{-\frac43 u^3}.
\end{equation}
\end{thm}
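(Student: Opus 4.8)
The plan is to prove the two bounds by entirely different methods, matching the dichotomy announced in the abstract. For the lower bound I would work directly with the known Fredholm-determinant formula for the two-point distribution $\Pb({\cal A}_1(0)\leq s_1,{\cal A}_1(u)\leq s_2)$. The covariance can be written as a double integral $\Cov({\cal A}_1(0),{\cal A}_1(u))=\int\int \big(\Pb({\cal A}_1(0)>s_1,{\cal A}_1(u)>s_2)-\Pb({\cal A}_1(0)>s_1)\Pb({\cal A}_1(u)>s_2)\big)\,\dx s_1\,\dx s_2$, so it suffices to show that the joint upper-tail event is positively correlated with a quantitatively controllable strength. The key structural input is the FKG inequality: the Airy$_1$ process, realized as a limit of last passage times (which are monotone functions of the i.i.d.\ weights), inherits positive association, so the integrand is nonnegative and the covariance is genuinely positive. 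To extract the exponential rate I would localize: condition on the value of ${\cal A}_1$ at the midpoint $u/2$ (or use a finite-$n$ LPP approximation), and show that the event that two point-to-line maximizers both pass near a common point costs at most $e^{-(\frac43+o(1))u^3}$ in probability but contributes a non-negligible correlation; quantitatively this is the probability that a single Airy$_1$/Brownian-type excursion achieves an atypically large value over a window of width $u$, whose cost is $\tfrac13\cdot\tfrac{(2u)^3}{?}$-type — the $\frac43 u^3$ constant should emerge from the large-deviation rate $\exp(-\tfrac{4}{3}t^{3/2})$ of the GOE Tracy--Widom right tail after the parabolic change of variables inherent in going from flat to a shifted problem. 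The loss in this argument is only the polynomial-in-$u$ prefactor corrections in the tail estimates, which aggregate to the benign $e^{-cu\ln u}$ factor.

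For the upper bound I would use the connection of ${\cal A}_1$ to exponential LPP with a flat (point-to-line) initial condition, as set up in the body of the paper, together with the new geometric results on point-to-line geodesics stated earlier. The strategy is a decoupling argument: $h(0,t)$ and $h(ut^{2/3},t)$ are each determined by the weights in a neighbourhood of their respective point-to-line geodesics, and on a very likely event these two geodesics are disjoint and moreover are contained in disjoint space-time regions, so conditionally on the complement of a rare event the two height values are \emph{independent}. Then $\Cov\leq \E[|{\cal A}_1(0)-\E {\cal A}_1(0)|\cdot|{\cal A}_1(u)-\E {\cal A}_1(u)|\,;\,\text{bad event}]$, which by Cauchy--Schwarz is bounded by $\sqrt{\Pb(\text{bad event})}$ times $L^4$-type moment bounds on the Airy$_1$ marginals (these moments are finite and $O(1)$ by the GOE tail). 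Thus everything reduces to showing $\Pb(\text{the two point-to-line geodesics ending at $0$ and at $ut^{2/3}$ are not well-separated})\leq e^{(c'/2)u^2}e^{-\frac83 u^3}$, i.e.\ super-exponential decay with the correct leading constant $\frac83$ in the exponent of the probability (which becomes $\frac43$ after the square root).

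The heart of the upper bound — and the step I expect to be the main obstacle — is establishing that transversal fluctuation estimate with the \emph{sharp} constant. The bad event forces the flat geodesic to either wander a transversal distance of order $ut^{2/3}$, or forces the last-passage value along a ``pinched'' path to be anomalously large; the first has probability governed by the moderate-deviation transversal-fluctuation bounds for geodesics (cost $e^{-c u^3}$), but pinning down the constant $\frac83$ requires matching it to the one-point large-deviation rate $\exp(-\tfrac{4}{3}\theta^{3/2})$ for the lower tail of point-to-line LPP evaluated at the relevant displacement, i.e.\ the competition between a geodesic from $0$ and a geodesic from $ut^{2/3}$ meeting in the middle. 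Concretely I would (i) reduce the covariance to the separation event via the conditional-independence / Cauchy--Schwarz step above; (ii) decompose the separation event into a transversal-fluctuation part and a passage-time part; (iii) bound each using the integrable-probability one-point tail estimates with explicit constants, being careful that the flat geometry contributes a factor $2$ relative to the curved case (hence $\frac83$ rather than $\frac43$ in the probability); (iv) track all polynomial and sub-leading corrections to confirm they are absorbed into the $e^{c'u^2}$ prefactor. Step (iii) is where all the care goes: the naive union bound over the location of the ``pinch point'' must be shown to cost only a polynomial factor, and the Brownian-Gibbs or determinantal tail inputs must be applied at exactly the right scale so that no constant is lost.
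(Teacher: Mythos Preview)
Your lower-bound sketch is in the right spirit (Hoeffding formula plus FKG to get a nonnegative integrand, then restrict), but the paper does \emph{not} carry it out by a geodesic/coalescence argument; it instead performs a direct asymptotic analysis of the Fredholm determinant for the two-point function. Writing $F(u;s_1,s_2)=f(s_1)f(s_2)\det(\Id-\widetilde K)$ with $\widetilde K=(\Id-K_{1,1})^{-1}K_{1,2}(\Id-K_{2,2})^{-1}K_{2,1}$, one shows $\det(\Id-\widetilde K)-1=-\Tr(K_{1,2}K_{2,1})+\text{(smaller)}$ and computes $-\Tr(K_{1,2}K_{2,1})\sim \frac{s_1 s_2}{16\pi u^4}e^{-2(s_1+s_2)u-\frac43 u^3}$ by steepest descent. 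The $e^{-cu\ln u}$ loss comes from having to take $s_1,s_2\sim 3\ln u$ so that the error term $R_1$ (which carries an $e^{-\min\{s_1,s_2\}}$ but no $s_1 s_2$) is dominated. Your probabilistic route might also work, but getting the sharp $\frac43$ that way is not obviously easier.

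The upper-bound plan has a genuine gap. First, the inequality $\Cov(X,Y)\le \E\big[|X-\E X|\,|Y-\E Y|\,;\text{bad}\big]$ is not valid: conditional independence of $X,Y$ on the good event does \emph{not} make $\E[(X-\E X)(Y-\E Y);\text{good}]$ vanish, because the good event is itself correlated with $X$ and $Y$. Second, and more seriously, even with a correct decoupling the Cauchy--Schwarz step cannot give the sharp constant. The ``bad'' event (the two point-to-line geodesics meeting, or each crossing the midline) has probability $e^{-(\frac43+o(1))u^3}$, not $e^{-\frac83 u^3}$: by the transversal-fluctuation bound each geodesic reaches the midpoint with probability $\approx e^{-\frac43(u/2)^3}=e^{-\frac16 u^3}$, and after BK the intersection costs $\approx e^{-\frac13 u^3}$ in the LPP scaling, i.e.\ $e^{-\frac43 u^3}$ in the Airy scaling. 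Taking a square root would leave you with $e^{-\frac23 u^3}$.

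The paper circumvents this loss by a different decoupling. Define $X_j$ (resp.\ $Y_{j'}$) as the rescaled LPP restricted to paths staying strictly to the left of $j(2N)^{2/3}$ (resp.\ right of $(u-j')(2N)^{2/3}$), and telescope $X=\sum_j(X_{j+1}-X_j)$, $Y=\sum_{j'}(Y_{j'+1}-Y_{j'})$. Most pairs of increments depend on disjoint randomness and have zero covariance; for the remaining pairs one bounds $\Cov(X_{j+1}-X_j,Y_{j'+1}-Y_{j'})\le \E[(X-X_j)(Y-Y_{j'})]$. The crucial point is that $X-X_j\ge 0$ and \emph{vanishes identically} off the event $A_j=\{\sup_t\Gamma^*_N(t)\ge j(2N)^{2/3}\}$ (and similarly for $Y-Y_{j'}$), so the product is supported on $A_j\cap B_{j'}$. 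This allows H\"older with $p=u$ and $q=u/(u-1)$, giving $\Pb(A_j\cap B_{j'})^{1-1/u}$ times a moment factor that is only polynomial in $u$; no factor of $2$ is lost in the exponent. The required bound $\Pb(A_j\cap B_{j'})\le e^{-\frac13 u^3+cu^2}$ is then obtained by separating the cases $\Gamma_1\cap\Gamma_2\ne\emptyset$ (coalescence, handled by a dedicated estimate) and $\Gamma_1\cap\Gamma_2=\emptyset$ (handled by BK applied to paths, not geodesics). The idea you are missing is this ``vanishing on the good event'' structure, which is what permits H\"older with $q\approx 1$ in place of Cauchy--Schwarz.
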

Clearly, the threshold $u>1$ above is arbitrary, and by changing the constants $c,c'$ we can get the same bounds for any $u$ bounded away from $0$.

The upper and the lower bounds in Theorem~\ref{ThmMain} are proved separately with very different arguments.

In Section~\ref{sectUpperBound} we prove the upper bound; see Corollary~\ref{corUB}. For this purpose we consider the point-to-line exponential last passage percolation (LPP) which is known to converge to the Airy$_1$ process under an appropriate scaling limit. Corollary~\ref{corUB} is an immediate consequence of Theorem~\ref{thmUpperBound} which proves the corresponding decorrelation statement in the LPP setting. The strategy for the upper bound follows the intuition that the decorrelation comes from the fact that the point-to-line geodesics for two initial points far from each other use mostly disjoint sets of random variables. To make this precise, we prove and use results controlling the transversal fluctuations of the point-to-line geodesics and their coalescence probabilities (see Theorem~\ref{ThmUB6Bis} and Theorem~\ref{ThmCrossings}) that are of independent interest. The proof is mainly based on probabilistic arguments, but uses one point moderate deviation estimates for the point-to-point and point-to-line exponential LPP with optimal exponents. Such results have previously been proved in~\cite{LR10}, but an estimate with the correct leading order term in the upper tail exponent is required for our purposes. These are obtained in Lemma~\ref{lemUB2} and Lemma~\ref{lemUB2B} by using asymptotic analysis.

In Section~\ref{SectLowerBound} we prove the lower bound; see Theorem~\ref{ThmLowerBound}. For the lower bound we start with Hoeffding's covariance formula, which says that the covariance of two random variables is given by the double integral of the difference between their joint distribution and the product of the two marginals; see \eqref{covid}. The joint distribution of the Airy$_1$ process is given in terms of a Fredholm determinant (see \eqref{Fr}) and the proof uses analytic arguments to obtain precise estimates for these Fredholm determinants. A crucial probabilistic step here, however, is the use of the FKG inequality applied in the LPP setting, which, upon taking an appropriate scaling limit yields that the aforementioned integrand is always non-negative; see Lemma~\ref{lemFKG}. This allows one to lower bound the covariance by estimating the integrand only on a suitably chosen compact set, which nonetheless leads to a lower bound with the correct value of the leading order exponent.

We finish this section with a brief discussion of some related works. Studying the decay of correlations in exponential LPP has recently received considerable attention. Following the conjectures in the partly rigorous work~\cite{FS16}, the decay of correlations in the time direction has been studied for the stationary and droplet initial conditions in~\cite{FO18}, where precise first order asymptotics were obtained (see also~\cite{FO22,BKLD22} for works on the half-space geometry). Similar, but less precise, estimates for the droplet and flat initial conditions were obtained in~\cite{BG18, BGZ19}. All these works also rely on understanding the localization and geometry of geodesics in LPP, some of those results are also useful for us. The lower bounds in~\cite{BG18, BGZ19} also use the FKG inequality in the LPP setting and provide bounds valid in the pre-limit. One might expect that similar arguments can lead to a bound similar, but quantitatively weaker, to Theorem~\ref{thmUpperBound} valid in the LPP setting.

\paragraph{Acknowledgements.} The work of O. Busani and P.L. Ferrari was partly funded by the Deutsche Forschungsgemeinschaft (DFG, German Research Foundation) under Germany’s Excellence Strategy - GZ 2047/1, projekt-id 390685813. P.L. Ferrari was also supported by the Deutsche Forschungsgemeinschaft (DFG, German Research Foundation) - Projektnummer 211504053 - SFB 1060. R. Basu is partially supported by a Ramanujan Fellowship (SB/S2/RJN-097/2017) and a MATRICS grant (MTR/2021/000093) from SERB, Govt. of India, DAE project no. RTI4001 via ICTS, and the Infosys Foundation via the Infosys-Chandrasekharan Virtual Centre for Random Geometry of TIFR.

\newpage
\section{Upper Bound}\label{sectUpperBound}
In this section we prove the upper bound of Theorem~\ref{ThmMain}.

\subsection{Last passage percolation setting}
We consider exponential last passage percolation (LPP) on $\Z^2$. Let $\omega_{i,j}\sim\exp(1)$, $i,j\in\Z$, be independent exponentially distributed random variables with parameter $1$. For points $u,v\in \Z^2$ with $u\prec v$, i.e., $u_1\leq v_1$ and $u_2\leq v_2$, we denote the passage time between the points $u$ and $v$ by
\begin{equation}
L_{u,v}=\max_{\pi:u\to v} \sum_{(i,j)\in\pi} \omega_{i,j},
\end{equation}
where the maximum is taken over all up-right paths from $u$ to $v$ in $\Z^2$. Denote by $\Gamma_{u,v}$ the geodesic from $u$ to $v$, that is, the path $\pi$ maximizing the above sum. Furthermore, let $\LL_n=\{(x,y)\in\Z^2\,|\, x+y=n\}$ and denote by
\begin{equation}
L_{u,\LL_n}=\max_{\pi:u\to\LL_n} \sum_{(i,j)\in\pi}\omega_{i,j}
\end{equation}
the point-to-line last passage time, where the maximum is taken over all up-right paths going from $u$ to $\LL_n$.

Let us mention some known limiting results of exponential LPP. Let\footnote{We do not write explicitly the rounding to integer values, i.e., $(x,y)$ stands for $(\lfloor x\rfloor,\lfloor y\rfloor)$.}
\begin{equation}
I(u)=u(2N)^{2/3}(1,-1),\quad J(u)=(N,N)+u(2N)^{2/3}(1,-1)
\end{equation}
and define the rescaled LPP
\begin{equation}\label{eq1.4}
L_N^*(u)=\frac{L_{I(u),\LL_{2N}}-4N}{2^{4/3}N^{1/3}}, \quad L_N(u)=\frac{L_{I(u),(N,N)}-4N}{2^{4/3}N^{1/3}}.
\end{equation}
Then, by the result on TASEP with density $1/2$~\cite{Sas05,BFPS06} which can be transferred to LPP using slow decorrelation~\cite{CFP10b}, we know that
\begin{equation}\label{eqUB5}
\lim_{N\to\infty} L_N^*(u)= 2^{1/3}{\cal A}_1(2^{-2/3}u),
\end{equation}
where ${\cal A}_1$ is the Airy$_1$ process, in the sense of finite-dimensional distributions. Similarly, (see~\cite{Jo03b} for the geometric case and~\cite{BP07} for a two-parameter extension)
\begin{equation}
\label{eqa2}
\lim_{N\to\infty} L_N(u)= {\cal A}_2(u)-u^2,
\end{equation}
with ${\cal A}_2$ is the Airy$_2$ process~\cite{PS02}, where in~\cite{Jo03} the convergence is weak convergence on compact sets.
\begin{figure}[t!]
 \centering
 \includegraphics[height=5.5cm]{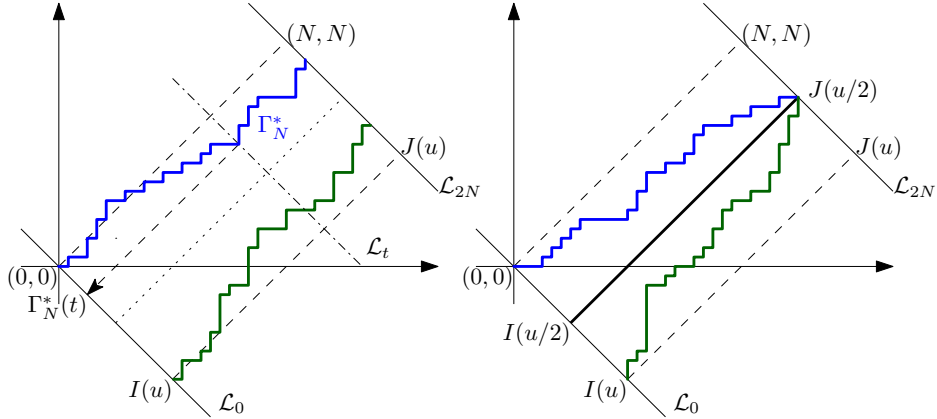}
\caption{
(Left): In the typical scenario, the geodesics associated with $L_N^*(0)$ (blue) and $L_N^*(u)$ (green) do not cross the straight line connecting $I(u/2)$ and $J(u/2)$. This suggests that $L_N^*(0)$ and $L_N^*(u)$ are almost independent. (Right) In the untypical case where the blue and green geodesics meet. In that case the geodesics will meet around the point $J(u/2)$. The probability of this event is the main contributor to the covariance of $L_N^*(0)$ and $L_N^*(u)$.
}
\label{FigGeometryUB}
\end{figure}

We also denote by $\Gamma_N$ (resp.\ $\Gamma^*_N$) the (almost surely unique) \emph{geodesic} attaining $L_{(0,0),(N,N)}$ (resp.\ $L_{(0,0),\LL_{2N}}$). For a directed path $\pi$, we denote by $\pi(t)=(x-y)/2$ where $(x,y)$ is the unique point (if it exits) where $\pi$ intersect $\LL_t$. The parameter $t$ will be thought of as \emph{time} and $\pi(t)$ will be the \emph{position} of the path at time $t$. For instance, if $\Gamma^*_N$ ends at $J(u)$, then $\Gamma^*_N(2N)=u(2N)^{2/3}$, see also Figure~\ref{FigGeometryUB}. We shall also denote by $L(\pi)$ the passage time of the path $\pi$, i.e., the sum of the weights on $\pi$.

The rescaled last passage times $L_N^*(u)$ and $L_N(u)$ have super-exponential upper and lower tails (see e.g.~Appendices of~\cite{FO18} for a collection of such results and references), which implies that the limit of their covariance is the covariance of their limit, i.e.,
\begin{equation}\label{eq2.7}
\lim_{N\to\infty} {\rm Cov}\left(L_N^*(u),L_N^*(0)\right)= 2^{2/3}{\rm Cov}\left({\cal A}_1(2^{-2/3}u),{\cal A}_1(0)\right).
\end{equation}
So if ${\rm Cov}\left(L_N^*(u),L_N^*(0)\right)\sim e^{-\beta u^3}$, then ${\rm Cov}\left({\cal A}_1(u),{\cal A}_1(0)\right)\sim e^{-4 \beta u^3}$.

\medskip

We first state the upper bound on ${\rm Cov}\left(L_N^*(u),L_N^*(0)\right)$ which is the main result in this section.

\begin{thm}\label{thmUpperBound}
For $N^{1/14}\gg u>1$,
\begin{equation}
{\rm Cov}\left(L_N^*(u),L_N^*(0)\right)\leq e^{cu^2}e^{- \frac13 u^3}
\end{equation}
for some $c>0$.
\end{thm}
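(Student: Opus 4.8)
The plan is to bound the covariance of $L_N^*(u)$ and $L_N^*(0)$ by decomposing the probability space according to whether the two point-to-line geodesics (from $I(u)$ and from $I(0)$ to $\LL_{2N}$) are "essentially disjoint'' or not. On the good event, where the geodesics stay far apart and use disjoint collections of weights, $L_N^*(u)$ and $L_N^*(0)$ are genuinely independent, contributing zero to the covariance; the entire covariance comes from the bad event, whose probability carries the super-exponential factor $e^{-\frac13 u^3 + O(u^2)}$. Concretely, I would use Hoeffding's covariance formula to write the covariance as a double integral over levels $(s,t)$ of $\Pb(L_N^*(u)>s, L_N^*(0)>t) - \Pb(L_N^*(u)>s)\Pb(L_N^*(0)>t)$, and then split this integrand using the decorrelation event.

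The key geometric input is the following: introduce the anti-diagonal line segment through the midpoint $J(u/2)$ (as in Figure~\ref{FigGeometryUB}), and observe that if \emph{neither} geodesic crosses this barrier, then $L_{I(u),\LL_{2N}}$ and $L_{I(0),\LL_{2N}}$ depend on disjoint sets of the $\omega_{i,j}$'s and are therefore independent. So the covariance is controlled by the probability that at least one of the two geodesics crosses the barrier near $J(u/2)$. Here I would invoke the transversal-fluctuation control of point-to-line geodesics (Theorem~\ref{ThmUB6Bis}) together with the coalescence/crossing estimates (Theorem~\ref{ThmCrossings}): forcing a point-to-line geodesic from $I(u)$ to reach the anti-diagonal at distance $\sim u(2N)^{2/3}$ on the "wrong'' side requires an atypical transversal displacement of order $u$ in the $(2N)^{2/3}$-rescaled units, which by the one-point moderate deviation estimates with sharp leading constant (Lemma~\ref{lemUB2} and Lemma~\ref{lemUB2B}) has probability $e^{-\frac13 u^3 + O(u^2)}$ --- the $\frac13$ being precisely the constant appearing in the upper-tail rate function for exponential LPP at the relevant scale, and the source of the matching $\frac13$ in the statement (which becomes $\frac43$ after the $2^{1/3}$, $2^{-2/3}$ rescaling in \eqref{eqUB5}, consistently with Theorem~\ref{ThmMain}).

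To turn this into a bound on the covariance integrand rather than just a probability, I would bound the bad-event contribution to Hoeffding's integral by something like $\int\!\!\int \Pb(\text{geodesics cross barrier},\, L_N^*(u)>s,\, L_N^*(0)>t)\,\dx s\,\dx t$ plus the analogous product term, and then use Cauchy--Schwarz (or simply $\Pb(A\cap B)\le \Pb(A)$) together with the super-exponential upper tails of $L_N^*(\cdot)$ recalled after \eqref{eqa2} to ensure the $s,t$ integrals converge and contribute only a further $e^{O(u^2)}$ (indeed only a polynomial or constant) factor. The restriction $N^{1/14}\gg u$ is exactly what is needed for the pre-limit moderate deviation estimates and the transversal fluctuation bounds to hold uniformly in the stated range; I would track this constraint through each application of Theorems~\ref{ThmUB6Bis}--\ref{ThmCrossings} and Lemmas~\ref{lemUB2}--\ref{lemUB2B}.

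The main obstacle I anticipate is obtaining the crossing probability with the \emph{sharp} leading-order constant $\frac13$ rather than merely an $e^{-c u^3}$ bound with an unspecified $c$. A naive union bound over the $\sim u (2N)^{2/3}$ possible crossing locations, each contributing an upper-tail cost, risks losing the constant or introducing spurious polynomial-in-$N$ factors; the delicate point is to organize the event so that a \emph{single} one-point moderate deviation estimate (with the optimal exponent from Lemma~\ref{lemUB2B}) governs the whole crossing probability, using the concavity of the limit shape to argue that the cheapest way to cross the barrier is the ``direct'' one near $J(u/2)$. Getting the bookkeeping right so that all the error terms genuinely combine into $e^{cu^2}$ --- and not something larger --- is the crux of the argument.
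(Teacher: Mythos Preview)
Your overall strategy---decouple the two LPPs via a barrier and charge the covariance to a bad geometric event---matches the paper's, and you correctly identify Theorems~\ref{ThmUB6Bis} and~\ref{ThmCrossings} as the main inputs. But the single-barrier decomposition you describe does not reach the constant $\tfrac13$; the obstacle you flag in your last paragraph is real and your proposed resolution (``a single one-point moderate deviation estimate'') is not how it is overcome.

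Here is the concrete problem. With $\tilde X,\tilde Y$ the rescaled LPPs restricted to stay, respectively, left and right of the midline through $I(u/2)$ and $J(u/2)$, one has $\Cov(\tilde X,\tilde Y)=0$ and
\[
\Cov(X,Y)=\Cov(\tilde X,Y-\tilde Y)+\Cov(X-\tilde X,\tilde Y)+\Cov(X-\tilde X,Y-\tilde Y).
\]
The last term is supported on $\{X\neq\tilde X\}\cap\{Y\neq\tilde Y\}$, whose probability is indeed $e^{-\frac13 u^3+cu^2}$ (this is where Theorem~\ref{ThmCrossings} enters). But the cross terms are the issue: $Y-\tilde Y$ is supported on the event that the geodesic from $I(u)$ crosses the midline, which by Theorem~\ref{ThmUB6Bis} applied with $u/2$ has probability only $e^{-\frac43(u/2)^3+cu^2}=e^{-\frac16 u^3+cu^2}$. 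Any H\"older or Cauchy--Schwarz bound on $\Cov(\tilde X,Y-\tilde Y)$ therefore gives at best $e^{-\frac16 u^3+cu^2}$, and since $\tilde X$ is genuinely correlated with $Y-\tilde Y$ through the left-side weights, this is not an artefact of a crude inequality. Hoeffding's formula does not help here: the integrand $\Pb(X>s,Y>t)-\Pb(X>s)\Pb(Y>t)$ does not vanish on the event that only one geodesic crosses the barrier.

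The paper's fix is twofold. First, it replaces the single barrier with a \emph{telescoping family} of barriers at positions $j=1,\dots,u-1$: writing $X=\sum_j(X_{j+1}-X_j)$ with $X_j$ the LPP restricted left of position $j$, and similarly for $Y$, bilinearity kills every pair $(j,j')$ with $j+j'<u-1$ by disjoint-weight independence; there are no cross terms of the problematic type. For the surviving pairs one bounds $\Cov(X_{j+1}-X_j,Y_{j'+1}-Y_{j'})\le\E[(X-X_j)(Y-Y_{j'})]$, which is supported on the event $A_j\cap B_{j'}$ that \emph{both} geodesics make transversal displacements of size $j$ and $j'$ respectively. Second, to bound $\Pb(A_j\cap B_{j'})$ with the correct constant (Lemma~\ref{lemAjBj}), one splits according to whether the two geodesics intersect: if they do, Theorem~\ref{ThmCrossings} applies; if they do not, the two large-deviation events occur \emph{disjointly} and the BK inequality yields the product bound $e^{-\frac43 j^3}e^{-\frac43 j'^3}$, which by convexity and $j+j'\ge u-1$ gives $e^{-\frac13 u^3+cu^2}$. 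The BK step is the ingredient missing from your sketch; no single moderate-deviation estimate substitutes for it.
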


The following corollary, proving the upper bound in Theorem~\ref{ThmMain}, is immediate from \eqref{eq2.7} and the above theorem.

\begin{cor}\label{corUB}
For $u>1$, we have
\begin{equation}
{\rm Cov}\left({\cal A}_1(u),{\cal A}_1(0)\right) \leq e^{cu^2}e^{-\frac43 u^3}
\end{equation}
for some $c>0$.
\end{cor}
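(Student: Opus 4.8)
The plan is to deduce the corollary directly from Theorem~\ref{thmUpperBound} together with the convergence of covariances recorded in \eqref{eq2.7}. First I would observe that Theorem~\ref{thmUpperBound} gives, for every fixed $u>1$ and all $N$ large enough (in particular $N^{1/14}\gg u$, which holds for $N$ large once $u$ is fixed), the bound $\Cov\left(L_N^*(u),L_N^*(0)\right)\leq e^{cu^2}e^{-\frac13 u^3}$. Since the right-hand side does not depend on $N$, I may pass to the limit $N\to\infty$ and use \eqref{eq2.7}, which yields
\begin{equation}
2^{2/3}\Cov\left({\cal A}_1(2^{-2/3}u),{\cal A}_1(0)\right)=\lim_{N\to\infty}\Cov\left(L_N^*(u),L_N^*(0)\right)\leq e^{cu^2}e^{-\frac13 u^3}.
\end{equation}

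Next I would perform the change of variables to remove the factors of $2$. Writing $v=2^{-2/3}u$, equivalently $u=2^{2/3}v$, the inequality above becomes
\begin{equation}
\Cov\left({\cal A}_1(v),{\cal A}_1(0)\right)\leq 2^{-2/3}\exp\!\left(c\,2^{4/3}v^2\right)\exp\!\left(-\tfrac13\cdot 2^{2}v^3\right)=2^{-2/3}e^{c'' v^2}e^{-\frac43 v^3},
\end{equation}
where $c''=c\,2^{4/3}$. Absorbing the harmless constant $2^{-2/3}\le 1$ (or into the exponential prefactor) and renaming $c''$ as $c$ and $v$ as $u$ gives exactly the claimed bound $\Cov\left({\cal A}_1(u),{\cal A}_1(0)\right)\leq e^{cu^2}e^{-\frac43 u^3}$ for all $u>1$. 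This is precisely the arithmetic already flagged in the sentence following \eqref{eq2.7}: a covariance decay of $e^{-\beta u^3}$ at the LPP level transfers to $e^{-4\beta u^3}$ at the Airy$_1$ level, and here $\beta=\tfrac13$ gives $\tfrac43$.

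The only genuine content is thus Theorem~\ref{thmUpperBound}; the corollary is a formal consequence, and there is no real obstacle. The one point worth stating carefully is the legitimacy of exchanging the limit with the covariance, i.e.\ the validity of \eqref{eq2.7}: this rests on the finite-dimensional convergence \eqref{eqUB5} promoted to convergence of second moments, which in turn follows from the uniform super-exponential upper and lower tail bounds for $L_N^*(u)$ (as collected in the appendices of~\cite{FO18}) giving uniform integrability of $L_N^*(u)L_N^*(0)$ and of the marginals. Granting this — which is already asserted in the text preceding the statement — the corollary follows immediately, and one should also note that the restriction $u>1$ is inessential, since for $u$ in any compact subset of $(0,\infty)$ the bound is trivially true after adjusting $c$.
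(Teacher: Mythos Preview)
Your proposal is correct and is precisely the argument the paper intends: the corollary is stated as ``immediate from \eqref{eq2.7} and the above theorem,'' and you have simply spelled out the change of variables $v=2^{-2/3}u$ that converts the exponent $\tfrac13 u^3$ into $\tfrac43 v^3$, together with the justification of \eqref{eq2.7} via uniform tail bounds. There is nothing to add.
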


Before proceeding further, let us explain the heuristic idea behind the proof of Theorem~\ref{thmUpperBound}. Let $T$ denote the straight line joining $I(u/2)$ and $J(u/2)$. Let $\tilde{L}_{N}(0)$ (resp.\ $\tilde{L}_{N}(u)$) denote the rescaled last passage time from $(0,0)$ to $\LL_{2N}$ (resp.\ from $I(u)$ to $\LL_{2N}$) in the LPP restricted to use the randomness only to the left (resp.\ to the right) of $T$. Since $\tilde{L}_{N}(0)$ and\ $\tilde{L}_{N}(u)$ depend on disjoint sets of vertex weights and hence are independent, one expects that the leading order behaviour of the covariance is given by the probability that $L_N^*(0)\neq \tilde{L}_{N}(0)$ and $L_N^*(u)\neq \tilde{L}_{N}(u)$ (parts of the sample space where only one of these two events hold can also contribute, but our arguments will show that these contributions are not of a higher order, see Figure~\ref{FigGeometryUB}). Now,
$$\Pb(L_N^*(0)\neq \tilde{L}_{N}(0))=\Pb(L_N^*(u)\neq \tilde{L}_{N}(u))=\Pb\Big(\sup_{0\leq t\leq 2N}\Gamma^*_N(t)\geq \tfrac{1}{2}u(2N)^{2/3}\Big)$$
and the probability of the last event is $\lesssim e^{-\frac{1}{6}u^3}$ by Theorem~\ref{ThmUB6Bis} below. The proof is completed by showing that the probability of the intersection of the two events has an upper bound which is of the same order (at the level of exponents) as their product. This final step is obtained by considering the two cases, one where the point-to-line geodesics do not intersect, and the second where they do. The first part is bounded using the BK inequality where the probability of the geodesics intersecting is upper bounded separately in Theorem~\ref{ThmCrossings}.

\subsection{Localization estimates of geodesics}
As explained above, a key step in the proof is to get precise estimates for the probability that the geodesic behaves atypically, i.e., it exits certain given regions. To this end, the main result of this subsection provides the following localization estimate for $\Gamma^*_N$ that is of independent interest.

\begin{thm}\label{ThmUB6Bis}
For $N^{1/14}\gg u>1$ we have
\begin{equation}
\Pb\Big(\sup_{0\leq t\leq 2N}\Gamma^*_N(t)\geq u(2N)^{2/3}\Big)\leq e^{cu^2}e^{-\frac{4}{3} u^3}
\end{equation}
for some constant $c>0$.
\end{thm}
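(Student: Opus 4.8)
The plan is to control the supremum of the point-to-line geodesic $\Gamma^*_N$ by a one-point argument: if the geodesic reaches position $u(2N)^{2/3}$ at some time $t_0$, then the last passage time $L_{(0,0),\LL_{2N}}$ decomposes as $L_{(0,0),p} + L_{p,\LL_{2N}}$ where $p$ is a point on $\LL_{t_0}$ with $p_1 - p_2 \approx 2u(2N)^{2/3}$, and both summands must be atypically large for the total to be of the typical order $4N + \Or(N^{1/3})$. Concretely, on the event $\{\sup_t \Gamma^*_N(t) \ge u(2N)^{2/3}\}$ there is a (random) time $t_0$ and a point $p \in \LL_{t_0}$ with $\pi(t_0) \ge u(2N)^{2/3}$ on the geodesic, so $L_{(0,0),\LL_{2N}} = L_{(0,0),p} + L_{p,\LL_{2N}} - \omega_p$. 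The geometry forces one of the two pieces to traverse a region with a large transversal displacement relative to its length, and the curved limit shape makes this costly: the deterministic leading term of $L_{(0,0),p}$ over a box that is skewed by $u(2N)^{2/3}$ is smaller than $|p|/\,$(the on-diagonal value) by an amount of order $u^2 (2N)^{1/3}$, so to keep $L_{(0,0),\LL_{2N}}$ at its typical value one needs an upward deviation of order $u^2$ in rescaled units, whose probability is $\le e^{-(\frac43 + o(1)) u^3}$ by the moderate-deviation upper-tail estimates (Lemma~\ref{lemUB2}, Lemma~\ref{lemUB2B}).

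First I would reduce the supremum over the continuum of times to a union over dyadic-type blocks: partition $[0,2N]$ into $\Or(N)$ intervals (or better, into $\Or(\log N)$ scales) and use that if the geodesic is above level $u(2N)^{2/3}$ somewhere, then at one of the block endpoints it is above level $\tfrac12 u(2N)^{2/3}$ (or some fixed fraction), so it suffices to bound $\Pb(\Gamma^*_N(t) \ge c_0 u (2N)^{2/3})$ for a single time $t$ and then pay a polynomial-in-$N$ union bound, which is harmless against the stretched-exponential decay once $u$ is not too small (this is where the constraint $u \ll N^{1/14}$ enters — it guarantees the union-bound factor is absorbed into $e^{cu^2}$). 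For a fixed time $t = \alpha \cdot 2N$, the event $\{\Gamma^*_N(t) \ge c_0 u(2N)^{2/3}\}$ implies the existence of a point $p$ on $\LL_t$ at the prescribed transversal location with $L_{(0,0),p} + L_{p,\LL_{2N}} \ge 4N + s\, 2^{4/3}N^{1/3}$ for all $s$ below the actual value of $L_N^*(0)$; since $L_N^*(0)$ itself is $\Or(1)$ with overwhelming probability, I can condition on $L_N^*(0) \ge -M$ for a constant $M$ and then require $L_{(0,0),p} + L_{p,\LL_{2N}} \ge 4N - M\, 2^{4/3}N^{1/3}$. Using independence of the two passage times (they live on disjoint vertex sets once $p$ is fixed, up to the single weight $\omega_p$ which is negligible), and the fact that $\E L_{(0,0),p} = 4N - (\text{const})\, u^2 (2N)^{1/3} + o(u^2 N^{1/3})$ and similarly for the point-to-line piece, the deficit that must be overcome by fluctuations is $\gtrsim u^2 (2N)^{1/3}$, i.e.\ $\gtrsim u^2$ in rescaled units; splitting this deficit optimally between the two independent pieces and invoking the sharp upper-tail bounds $\Pb(L_N^*(0) \ge s) \le e^{-\frac{4}{3}(1+o(1)) s^{3/2}}$-type estimates gives the factor $e^{-\frac43 u^3}$ after optimizing (the constant $\tfrac43$ emerges precisely because $s^{3/2}$ with $s \sim u^2$ gives $u^3$, and the on-diagonal curvature constant is tuned so the coefficient is $\tfrac43$).

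The main obstacle, and the step requiring the most care, is getting the constant in the exponent exactly right rather than just an order-of-magnitude $e^{-c u^3}$ bound. This forces two things: (i) the moderate-deviation estimates must have the correct leading coefficient in the upper-tail rate function — this is exactly why the paper re-proves the one-point bounds in Lemma~\ref{lemUB2} and Lemma~\ref{lemUB2B} with optimal exponents rather than citing~\cite{LR10}; and (ii) the split of the required deviation between the point-to-point piece $L_{(0,0),p}$ and the point-to-line piece $L_{p,\LL_{2N}}$ must be optimized, and one must check that the worst case over the location of $p$ along $\LL_t$ and over the time $t$ does not improve the constant — heuristically the extreme case is $p$ near $J(u/2)$, i.e.\ $t \approx 2N$ and the full transversal budget spent, matching the picture in Figure~\ref{FigGeometryUB}. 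A secondary technical point is handling the uncountable supremum cleanly: one wants a deterministic geometric lemma saying that a geodesic exceeding level $u(2N)^{2/3}$ must pass through one of a polynomially-bounded family of ``portals,'' so that the union bound is legitimate; the slack between $u(2N)^{2/3}$ in the statement and the threshold used internally is what lets the $e^{cu^2}$ prefactor absorb both the union-bound count and the $o(u^2)$ errors in the limit-shape expansion. I would organize the write-up as: (1) a geometric reduction lemma; (2) the limit-shape deficit computation; (3) the two-sided split and application of the sharp one-point tail bounds; (4) assembling the union bound and collecting the $e^{cu^2}$ errors.
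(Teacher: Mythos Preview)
Your fixed-time argument (steps (2)--(4) of your plan) is essentially correct and is exactly what the paper proves as Lemma~\ref{lemUB3B}. The genuine gap is in your reduction step~(1). You propose to discretize time into $\Or(N)$ (or $\Or(\log N)$) blocks and pay a polynomial-in-$N$ union-bound factor, claiming this is ``absorbed into $e^{cu^2}$.'' It is not: the statement must hold for all $u>1$, in particular for $u$ bounded, where $e^{cu^2}$ is just a constant and cannot swallow any power of $N$. Relatedly, your claim that ``if the geodesic is above level $u(2N)^{2/3}$ somewhere, then at one of the block endpoints it is above level $\tfrac12 u(2N)^{2/3}$'' is only automatic if blocks have length $\Or(u(2N)^{2/3})$, which forces $\gtrsim N^{1/3}$ blocks. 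And even granting that, replacing $u$ by $\tfrac12 u$ in the fixed-time bound yields $e^{-\frac43(\tfrac12 u)^3}=e^{-\frac16 u^3}$, destroying the constant you are after.

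The paper's fix is to discretize into only $\e^{-1}=\Or(u^{3/2})$ intervals (so polynomial in $u$, not $N$), apply the fixed-time bound at level $u-1$ (not $c_0 u$; losing $1$ rather than a fixed fraction costs only $e^{\Or(u^2)}$ in the exponent), and then --- the key missing ingredient in your plan --- control what happens \emph{between} the discrete times by invoking the point-to-point transversal fluctuation bound (Lemma~\ref{lemUB5}) together with ordering of geodesics: if $\Gamma^*_N$ stays left of $I(u-1)+\cdot$ at every mesh point, it is sandwiched to the left of the concatenated point-to-point geodesics between consecutive mesh points, and each of those has transversal fluctuation $\le (2N)^{2/3}$ except with probability $\le e^{-\frac43 u^3}$ once $\e$ is chosen small enough. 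This interpolation step is not a ``secondary technical point''; without it the discretization either has too many cells or loses the sharp constant.
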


Although we do not prove a matching lower bound, the constant $\frac{4}{3}$ is expected to be optimal; see the discussion following Lemma~\ref{lemUB3}. Transversal fluctuation estimates for geodesics in LPP are of substantial interest and have found many applications. This is a first optimal upper bound in this direction for point-to-line geodesic. For point-to-point geodesics, similar estimates (albeit with unspecified constants in front of the cubic exponent) are proved for Poissonian LPP~\cite{BSS14} and exponential LPP~\cite{BGZ19}, see also~\cite{HS20} for a lower bound. In fact, we shall need to use the following estimate from~\cite{BGZ19,BF20b}.

\begin{lem}[Proposition~C.9 of~\cite{BGZ19}]\label{lemUB5}
For $N^{1/3}\gg u>1$ we have
\begin{equation}
\Pb\Big(\sup_{0\leq t\leq 2N}\Gamma_N(t)\geq u(2N)^{2/3}\Big)\leq e^{-c u^3}
\end{equation}
for some constant $c>0$.
\end{lem}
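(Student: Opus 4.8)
\textbf{Proof strategy.} The plan is to deduce this transversal‑localization bound from one‑point moderate‑deviation estimates for point‑to‑point exponential LPP, the driving mechanism being the parabolic curvature of the macroscopic shape. First I would reduce to large $u$: the probability $\Pb(\sup_{0\le t\le 2N}\Gamma_N(t)\ge u(2N)^{2/3})$ is non‑increasing in $u$, and for $u$ in any fixed compact subinterval of $(1,\infty)$ it is bounded away from $1$ uniformly in $N$ by standard transversal‑localization estimates for point‑to‑point geodesics; hence, after decreasing $c$, it suffices to treat $u\ge u_0$ for a large absolute constant $u_0$, which is exactly what makes the curvature gains dominate the lower‑order terms below.

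The core is a fixed‑time estimate. Fix $\tau\in[1,2N-1]$ and suppose $\Gamma_N(\tau)\ge D:=u(2N)^{2/3}$, i.e.\ the geodesic crosses $\LL_\tau$ at a point $w$ of displacement $\ge D$. Splitting at $w$,
\[
L_{(0,0),(N,N)}=L_{(0,0),w}+L_{w,(N,N)}-\omega_w\ \le\ L_{(0,0),\LL_\tau^{\ge D}}+L_{\LL_\tau^{\ge D},(N,N)},
\]
where $\LL_\tau^{\ge D}$ is the portion of $\LL_\tau$ at displacement $\ge D$ and the two terms on the right are the corresponding restricted point‑to‑line and line‑to‑point passage times. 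Since $(\sqrt{a+D}+\sqrt{a-D})^2=4a-D^2/a+O(D^4/a^3)$, the means of these two terms add up to at most $4N-\Theta(D^2/N)=4N-\Theta(u^2N^{1/3})$, strictly below $\E[L_{(0,0),(N,N)}]=4N$, with the deficit minimized at $\tau=N$ and growing without bound as $\tau\to1$ or $\tau\to2N$ (the relevant terms being $D^2/\tau$ and $D^2/(2N-\tau)$). So the event forces either a downward deviation of $L_{(0,0),(N,N)}$ by $\Omega(u^2N^{1/3})$, of probability $e^{-\Omega(u^6)}$ by the lower tail, or an upward deviation of one restricted term by $\Omega(u^2N^{1/3})$, i.e.\ by $\Omega(u^2)$ fluctuation‑scale units. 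For $L_{(0,0),\LL_\tau^{\ge D}}$ I would decompose $\LL_\tau^{\ge D}$ dyadically in the displacement variable: the shell at displacement $\sim2^\ell D$ contains only $\sim2^\ell u$ boxes of the natural transversal scale $N^{2/3}$, and reaching the target from such a box costs an upward deviation of $\gtrsim4^\ell u^2$ units, so a union bound over these $\sim2^\ell u$ boxes followed by summing $\sum_{\ell\ge0}2^\ell u\,e^{-c(4^\ell u^2)^{3/2}}$ yields $e^{-\Omega(u^3)}$. The crucial point is that the union bound runs over $O(u)$ correlation cells, not $O(N)$ lattice sites, so only $u$‑polynomial factors appear and $e^{-cu^3}$ absorbs them. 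The inputs are precisely the optimal‑exponent one‑point bounds (upper tail $\lesssim e^{-cs^{3/2}}$, lower tail $\lesssim e^{-cs^3}$) — here one may quote Lemma~\ref{lemUB2} rather than its sharper leading‑order refinement, which is only needed for Theorem~\ref{ThmUB6Bis}.

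Finally I would pass from a fixed time to $\sup_{0\le t\le2N}$. Observe that $L_{(0,0),\LL_\tau^{\ge D}}$ is non‑decreasing in $\tau$ (any up‑right path to $\LL_\tau^{\ge D}$ extends forward to $\LL_{\tau'}^{\ge D}$ along the line of displacement $D$, adding only non‑negative weights), while $L_{\LL_\tau^{\ge D},(N,N)}$ is non‑increasing in $\tau$ (extend backwards along the same line). Partitioning $[0,2N]$ into blocks accumulating at $t=N$ and at the two endpoints, on a block $[\tau^-,\tau^+]$ the supremum is at most $L_{(0,0),\LL_{\tau^+}^{\ge D}}+L_{\LL_{\tau^-}^{\ge D},(N,N)}$, whose mean equals $4N-\Theta(u^2N^{1/3})+\Theta(\tau^+-\tau^-)$; thus blocks of width $\lesssim u^2N^{1/3}$ near the midpoint retain a deficit of order $u^2N^{1/3}$, while blocks further out have deficit dominated by the far larger $D^2/\tau$ and $D^2/(2N-\tau)$ terms and are controlled more easily. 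Summing the block bounds (dyadically there are $O(\log N)$ of them, each contributing $e^{-\Omega(u^3)}$) together with the fixed‑time bound yields the claimed $e^{-cu^3}$.

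I expect the last step to be the main obstacle: obtaining the supremum bound with the optimal cubic exponent and \emph{no} spurious power of $N$ hinges on (i) the $\tau$‑monotonicity above, which lets one replace an entire block of times by its two endpoints; (ii) running every union bound over the $O(u)$ transversal cells of scale $N^{2/3}$ rather than over individual sites; and (iii) the blow‑up of the curvature deficit away from the midpoint. The rest is bookkeeping — tracking the $O(N^{1/3})$ point‑to‑line ``enhancement'' terms against the $\Theta(u^2N^{1/3})$ deficits (which is where $u\ge u_0$ is used), and checking that the hypothesis (here $u\ll N^{1/3}$ already suffices) keeps all the moderate‑deviation estimates in their range of validity.
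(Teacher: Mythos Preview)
The paper does not prove this lemma; it is quoted as Proposition~C.9 of \cite{BGZ19} and used as a black-box input, so there is no proof here to compare against. I therefore comment on your proposal directly and against what the argument in \cite{BGZ19} (and the closely parallel proof of Theorem~\ref{ThmUB6Bis} in this paper) actually does.

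Your fixed-time step is correct and is the standard mechanism. The gap is in the passage to the supremum. The monotonicity of $\tau\mapsto L_{(0,0),\LL_\tau^{\ge D}}$ and of $\tau\mapsto L_{\LL_\tau^{\ge D},(N,N)}$ is valid, and on a block $[\tau^-,\tau^+]$ it does reduce matters to a single pair of half-line LPPs with mean deficit $\Theta\bigl(D^2/\tau^+ + D^2/(2N-\tau^-)\bigr) - 2(\tau^+-\tau^-)$. But near the midpoint the deficit is only $\Theta(u^2 N^{1/3})$, so for the bound to bite you must take block widths $\lesssim u^2 N^{1/3}$, forcing $\gtrsim N^{2/3}/u^2$ blocks there. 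A genuinely dyadic partition accumulating at $t=N$ and at $t=0,2N$ will inevitably contain blocks of width $\gg u^2 N^{1/3}$ at times comparable to $N$, where the overlap term $2(\tau^+-\tau^-)$ swallows the deficit and the estimate becomes vacuous. Either way the union bound carries a factor polynomial in $N$, which cannot be absorbed into $e^{-cu^3}$ for bounded $u$ as $N\to\infty$; your claim of ``$O(\log N)$ blocks each contributing $e^{-\Omega(u^3)}$'' is precisely the step that does not close.

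The device used in \cite{BGZ19}, and mirrored in this paper's proof of Theorem~\ref{ThmUB6Bis}, is different: take a mesh of $\varepsilon^{-1}=O(u^{3/2})$ equally spaced times with $\varepsilon=\delta u^{-3/2}$; apply the fixed-time bound at level $u-1$ at each mesh time; and between consecutive mesh times sandwich $\Gamma_N$, via ordering of geodesics, by a point-to-point geodesic of length $\varepsilon N$, which would have to deviate by $(2N)^{2/3}=\delta^{-2/3}u\,(2\varepsilon N)^{2/3}$ on its own scale. In \cite{BGZ19} this last step is handled by an induction on scale; in this paper's Theorem~\ref{ThmUB6Bis} it simply invokes the present lemma on the smaller scale. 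The union bound then runs over $O(u^{3/2})$ events only, which is why no power of $N$ appears.
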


For the proof of Theorem~\ref{ThmUB6Bis} as well as the results in the subsequent subsections of this section, we shall use as input some lower and upper tail estimates of various LPP, which are collected and, if needed, proved in Appendix~\ref{a:LPP}.

The first step is to prove the special case $t=2N$ of Theorem~\ref{ThmUB6Bis}; we get an estimate of the probability that $\Gamma^*_N$ ends in \mbox{${\cal D}_u=\cup_{v\geq u} J(v)$}, that is, $\Gamma^*_N(2N)\ge u(2N)^{2/3}$.

\begin{lem}\label{lemUB3}
For all $N^{1/9}\gg u>1$, we have
\begin{equation}\label{eqUBLocLine}
\Pb(\Gamma^*_N(2N)\geq u(2N)^{2/3})\leq e^{cu^2}e^{-\frac43u^3}
\end{equation}
for some $c>0$.
\end{lem}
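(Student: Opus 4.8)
We plan to reduce the statement to a one-point moderate deviation estimate for the point-to-line passage time, combined with a union bound over the fluctuations of $L_{(0,0),(N,N)}$. Throughout write $\sigma=2^{4/3}N^{1/3}$ and $\mathcal{D}_u=\bigcup_{v\ge u}J(v)$. If the geodesic $\Gamma^*_N$ ends in $\mathcal{D}_u$, i.e.\ if $\Gamma^*_N(2N)\ge u(2N)^{2/3}$, then $L_{(0,0),\mathcal{D}_u}=L_{(0,0),\LL_{2N}}\ge L_{(0,0),(N,N)}$, since $(N,N)\in\LL_{2N}$; hence it suffices to bound $\Pb\big(L_{(0,0),\mathcal{D}_u}\ge L_{(0,0),(N,N)}\big)$. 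The point $(N,N)$ lies on the diagonal, so $L_{(0,0),(N,N)}$ concentrates around $4N$ on the scale $\sigma$, whereas the ray $\mathcal{D}_u$ emanates from $J(u)$, which sits at transversal distance $u(2N)^{2/3}$, so the passage time from the origin to it has mean of order $4N-u^{2}\sigma$; it is this parabolic deficit $u^{2}\sigma$ that will produce the exponent $-\tfrac43 u^{3}$.

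The central input is a sharp upper-tail bound for $L_{(0,0),\mathcal{D}_u}$. I will take from Appendix~\ref{a:LPP}, via the point-to-line moderate deviation estimate with optimal exponent (Lemma~\ref{lemUB2B}), a bound of the form
\[
\Pb\big(L_{(0,0),\mathcal{D}_u}\ge 4N-m\sigma\big)\le \exp\!\Big(-\tfrac43\,(u^2-m)^{3/2}+o(u^{2})\Big),\qquad 1\le m\le u^2-1,
\]
valid for $u$ in the range permitted by the hypothesis, with the correct constant $\tfrac43$ in front and a correction negligible on the scale $u^2$. This is the only step that uses the exact solvability of the model, and crucially it is uniform in $N$: a naive union bound over the $\asymp(2N)^{2/3}$ lattice points of $\mathcal{D}_u$ in each unit window would be fatal, since the hypothesis $u\ll N^{1/9}$ is far too weak to absorb such a factor. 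In addition I will use the classical point-to-point lower-tail bound $\Pb\big(L_{(0,0),(N,N)}\le 4N-s\sigma\big)\le e^{-c_1 s^3}$ from the same appendix.

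Now partition according to the value of $L_{(0,0),(N,N)}$: for $k\ge 1$ let $G_k=\{4N-(k+1)\sigma\le L_{(0,0),(N,N)}<4N-k\sigma\}$ and $G_0=\{L_{(0,0),(N,N)}\ge 4N-\sigma\}$. On $G_k$ the event $\{L_{(0,0),\mathcal{D}_u}\ge L_{(0,0),(N,N)}\}$ forces $L_{(0,0),\mathcal{D}_u}\ge 4N-(k+1)\sigma$, so
\[
\Pb\big(\Gamma^*_N(2N)\ge u(2N)^{2/3}\big)\le \sum_{k\ge 0}\Pb\big(G_k\cap\{L_{(0,0),\mathcal{D}_u}\ge 4N-(k+1)\sigma\}\big).
\]
Fix $C$ large enough that $c_1C^3\ge\tfrac43$ and split the sum at $k^\ast=Cu$. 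For the tail $k\ge k^\ast$ I simply drop the second event: $\sum_{k\ge k^\ast}\Pb(G_k)=\Pb\big(L_{(0,0),(N,N)}<4N-k^\ast\sigma\big)\le e^{-c_1(k^\ast)^3}\le e^{-\frac43 u^3}$. For $0\le k<k^\ast$ I drop $G_k$ and apply the displayed point-to-line bound with $m=k+1\le Cu+1\le u^2-1$; using $(u^2-m)^{3/2}\ge u^3-\tfrac32 m u$, the $k$-th term is at most $\exp\big(-\tfrac43u^3+2(k+1)u+o(u^2)\big)\le \exp\big(-\tfrac43 u^3+\Or_C(u^2)\big)$, and summing the $\le Cu$ such terms costs only a further factor $e^{\Or(u^2)}$. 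Altogether $\Pb\big(\Gamma^*_N(2N)\ge u(2N)^{2/3}\big)\le e^{cu^2}e^{-\frac43u^3}$ for all large $u$, and by enlarging $c$ the same bound holds on the whole range $N^{1/9}\gg u>1$, as claimed.

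The one genuine difficulty is the sharp point-to-line moderate deviation estimate quoted above: producing the constant $\tfrac43$ (rather than some smaller constant) with a correction of lower order, uniformly in $N$, requires integrable-probability input (Fredholm determinant / RSK asymptotics) and is precisely what confines the argument to the regime $u\ll N^{1/9}$. Everything else — the geometric reduction to $L_{(0,0),\mathcal{D}_u}\ge L_{(0,0),(N,N)}$, the partition over the fluctuations of $L_{(0,0),(N,N)}$, and the bookkeeping that keeps every correction at the scale $e^{\Or(u^2)}$ — is elementary.
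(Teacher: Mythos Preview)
Your argument is correct and is essentially the paper's proof: compare $L_{(0,0),\mathcal{D}_u}$ against $L_{(0,0),(N,N)}$ at a threshold of order $4N-C_0u\sigma$, using the point-to-point lower tail (Lemma~\ref{lemUB1}) on one side and the sharp point-to-half-line upper tail on the other, then expand $(u^2-C_0u)^{3/2}=u^3+\Or(u^2)$. Two minor remarks: the bound you invoke on $\Pb(L_{(0,0),\mathcal{D}_u}\ge 4N-m\sigma)$ is Lemma~\ref{lemUB2}, not Lemma~\ref{lemUB2B} --- the latter concerns the full line $\LL_{2N}$ and carries no $u^2$ shift, so it cannot produce the exponent $(u^2-m)^{3/2}$; and your dyadic-style partition over $k$ is superfluous, since both tail bounds are monotone in the threshold and the paper simply takes the single cut at $k^\ast=C_0u$.
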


Again, we do not prove a matching lower bound, but the constant $\frac{4}{3}$ should be optimal. Indeed, notice that by \eqref{eqa2}, one expects that $(2N)^{-2/3}\Gamma^{*}_{N}(2N)$ weakly converges to the almost surely unique maximizer ${\cal M}$ of ${\cal A}_{2}(u)-u^2$. The distribution of ${\cal M}$ has been studied in~\cite{Sch12,BKS12} whence it is known that $\Pb(|{\cal M}|\ge u)\sim e^{-\frac43u^3}$.

\begin{proof}[Proof of Lemma~\ref{lemUB3}]
Let $C_0$ be such that (using Lemma~\ref{lemUB1})
\begin{equation}\label{eq2.12}
\Pb(L_{(0,0),(N,N)}<4N- C_0u2^{4/3} N^{1/3})\le e^{-\frac{4}{3}u^3}.
\end{equation}
We have
\begin{equation}
\begin{aligned}
\Pb(\Gamma^*_N(2N)\leq u(2N)^{2/3})\geq &\, 1-\Pb(L_{(0,0),{\cal D}_u}\geq 4N-C_0u 2^{4/3}N^{1/3})\\
&-\Pb(L_{(0,0),(N,N)}<4N-C_0u 2^{4/3} N^{1/3}).
\end{aligned}
\end{equation}
Using our definition of $C_0$ and Lemma~\ref{lemUB2}, we get for $N^{1/3}\gg u\ge C_0+1$
\begin{equation}
\Pb(\Gamma^*_N(2N)\geq u(2N)^{2/3}) \leq C e^{-\frac43 (u^2-C_0u)^{3/2}}+ e^{-\frac{4}{3}u^3}\leq e^{cu^2} e^{-\frac{4}{3} u^3}.
\end{equation}
By adjusting the constant $c$ if necessary, we get the same conclusion for $u\in [1,C_0+1]$.
\end{proof}

Our next result is a similar localization estimate for the point-to-line geodesic $\Gamma^*_N$ at an intermediate time $t$.
\begin{lem}\label{lemUB3B}
Let $t=2\tau N$. Fix any $\theta\geq 1$ and take $\min\{\tau,1-\tau\}\geq u^{-\theta}$. Assume that $N^{1/(9+3\theta)}\gg u>\max\{1,\tfrac43\sqrt{\tau}C_0\}$ with $C_0$ as in \eqref{eq2.12}. Then there exists a constant $c>0$ such that
\begin{equation}
\label{eq:t1}
\Pb(\Gamma^*_N(t)\geq u(2N)^{2/3}) \leq e^{-\frac43 u^3 \tau^{-3/2}}e^{\frac12 u \tau^{-3/2}} e^{u^2(c+ 2 C_0 \tau^{-1})}.
\end{equation}
In particular, by choosing another constant $c'>0$,
\begin{equation}
\label{eq:t2}
\Pb(\Gamma^*_N(t)\geq u(2N)^{2/3})\leq e^{-\frac43 u^3+c' u^2}
\end{equation}
for all $N^{1/(9+3\theta)}\gg u>1$.
\end{lem}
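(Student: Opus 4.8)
The plan is to mimic the structure of the proof of Lemma~\ref{lemUB3}, but now split at the intermediate time $t=2\tau N$ rather than at the endpoint. Write $\Gamma^*_N(t)\geq u(2N)^{2/3}$ as the event that the geodesic passes through the region $\{(x,y): x+y=2\tau N,\ x-y\geq 2u(2N)^{2/3}\}$. Decomposing along this set of "forced" passage points, if the geodesic reaches such a point $w$ with $w$ on $\LL_{2\tau N}$ at position $\geq u(2N)^{2/3}$, then the total passage time $L_{(0,0),\LL_{2N}}$ is at most $L_{(0,0),w} + L_{w,\LL_{2N}}$, while on the other hand one has the lower bound $L_{(0,0),\LL_{2N}}\geq L_{(0,0),(N,N)}\geq 4N - C_0 u 2^{4/3}N^{1/3}$ with probability at least $1-e^{-\frac43 u^3}$ by \eqref{eq2.12}. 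This is the same kind of sandwiching used in Lemma~\ref{lemUB3}.

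The key point is to get the right exponent. The segment from $(0,0)$ to a point $w\in\LL_{2\tau N}$ at transversal displacement $\geq u(2N)^{2/3}$ is a point-to-point passage time over a rectangle of "length" parameter $\tau N$ but with transversal offset of order $u\tau^{-1}(2\tau N)^{2/3}$ in the natural scaling of that rectangle; similarly the segment from $w$ to $\LL_{2N}$ is a point-to-line passage time over a rectangle of length parameter $(1-\tau)N$ with a comparable offset rescaled by $(1-\tau)^{-1}$. One then applies the one-point upper-tail / lower-tail moderate deviation estimates for point-to-point LPP (Lemma~\ref{lemUB2}) and point-to-line LPP (Lemma~\ref{lemUB2B}), together with the analogue of the deterministic lower bound \eqref{eq2.12} rescaled to the sub-rectangle, summed (via a union bound) over the $O(N^{2/3})$ possible forced points $w$. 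The constraints $\min\{\tau,1-\tau\}\geq u^{-\theta}$ and $N^{1/(9+3\theta)}\gg u$ are exactly what is needed to guarantee that the rescaled transversal parameters are still $o(\text{length}^{1/3})$ so the moderate deviation estimates apply with their stated exponents, and to absorb the polynomial-in-$N$ union bound into the $e^{u^2(\cdots)}$ prefactor.

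Tracking the exponents: the optimal strategy for a geodesic forced through displacement $u(2N)^{2/3}$ at time $2\tau N$ is to travel "straight" to $w$, paying a deficit whose cost, in the $N^{1/3}$-scale, is of order $\tfrac43 (u\tau^{-1})^2 \cdot \tau\cdot(\text{something})$ — carefully combining the point-to-point cost over the first rectangle and the point-to-line cost over the second rectangle yields the combined exponent $-\tfrac43 u^3\tau^{-3/2}$, with the $e^{\frac12 u\tau^{-3/2}}$ coming from the sub-leading correction in the tail estimate (just as the $(u^2-C_0 u)^{3/2}$ in Lemma~\ref{lemUB3} produced a linear-in-$u$ correction), and the $e^{u^2(c+2C_0\tau^{-1})}$ from the interplay with the $C_0 u$ shift in \eqref{eq2.12} rescaled by $\tau^{-1}$ and from the union bound. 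For \eqref{eq:t2}, since $\tau^{-3/2}\geq 1$, the leading term $-\tfrac43 u^3\tau^{-3/2}\leq -\tfrac43 u^3$; the constraint $\tau\geq u^{-\theta}$ gives $\tau^{-3/2}\leq u^{3\theta/2}$, so the correction terms $\tfrac12 u\tau^{-3/2}$ and $2C_0 u^2\tau^{-1}$ are at most polynomial in $u$ of degree $< 3$ (here one uses $\theta$ fixed and, say, $\tfrac12 u^{1+3\theta/2} \leq u^2$ for $u$ large when $\theta$ is close to $1$, or more honestly one just needs them dominated by $c'u^2$ after enlarging $c'$ — wait, $1+3\theta/2$ can exceed $2$; the point is these are all $o(u^3)$ so they are absorbed into the $\tfrac43 u^3$ with room to spare, giving $-\tfrac43 u^3 + c'u^2$ after possibly weakening $-\tfrac43$ to any constant arbitrarily close to it — but in fact since they are lower order than $u^3$ one keeps $\tfrac43$ and only needs $c'u^2$ to dominate for $u$ large, adjusting $c'$ for small $u$).

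The main obstacle I expect is bookkeeping the scalings correctly: making sure that when one restricts a point-to-point (resp. point-to-line) LPP to the sub-rectangle $[(0,0),w]$ (resp. $[w,\LL_{2N}]$), the rescaled transversal displacement really is $u\tau^{-1}$ (resp. with $1-\tau$) in the units natural to that sub-rectangle, and that the length parameters are large enough that Lemma~\ref{lemUB2} and Lemma~\ref{lemUB2B} apply with the claimed leading constant $\tfrac43$ rather than just some constant. Checking that the union bound over forced points $w$ and the error from \eqref{eq2.12} rescaled genuinely contribute only at the $e^{cu^2}$ and polynomial-in-$N$ level — the latter being swallowed because $u\ll N^{1/(9+3\theta)}$ — is the other place where care is needed, but it is structurally identical to the corresponding step in Lemma~\ref{lemUB3}.
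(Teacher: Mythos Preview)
Your overall strategy --- split at time $t$ and force the two segments to jointly achieve passage time close to $4N$ --- is the right one, but the union-bound step has a genuine gap. You propose to sum over the $O(N^{2/3})$ forced points $w\in\LL_{2\tau N}$ and then claim this polynomial-in-$N$ factor is ``swallowed because $u\ll N^{1/(9+3\theta)}$''. That condition is an \emph{upper} bound on $u$; for fixed $u$ the factor $N^{2/3}$ is unbounded and cannot be absorbed into $e^{cu^2}$. The paper avoids this via a two-step argument. First, a rough a~priori bound using ordering of geodesics (Lemma~\ref{lemUB3} for the endpoint plus Lemma~\ref{lemUB5} for point-to-point transversal fluctuation) gives $\Pb(\Gamma^*_N(t)\geq C_1 u\tau^{-1/2}(2N)^{2/3})\leq e^{-\frac43 u^3\tau^{-3/2}}$, confining the crossing to the \emph{bounded} interval $v\in[u,C_1u\tau^{-1/2}]$. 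This interval is then discretized into $O(u/(\sqrt\tau\,\delta))$ pieces with $\delta=(1-\tau)^{2/3}$ --- polynomially many in $u$, not $N$. For each piece the second segment is an \emph{interval}-to-line LPP, so one needs Proposition~\ref{PropUB5} rather than the point-to-line Lemma~\ref{lemUB2B} you invoke.

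You also gloss over the step that actually produces the exponent. Writing $X_0=\sup_{v\geq u}L_{(0,0),K(v)}$ and $Y$ for the interval-to-line LPP, one must optimize over the split $S^*$ of the threshold $S=4N-au^2 2^{4/3}N^{1/3}$ (with $a=C_0\tau^{-1/2}u^{-1}$), i.e.\ minimize $s_1^{3/2}+s_2^{3/2}$ subject to the linear constraint $-\tfrac{u^2}{\tau}+s_1\tau^{1/3}+s_2(1-\tau)^{1/3}=-(a+\eta/u^2)u^2$. The minimizer yields $s_1^{3/2}+s_2^{3/2}=u^3(\tfrac1\tau-\tilde a)^{3/2}$, and expanding this is exactly where the factors $e^{\frac12 u\tau^{-3/2}}$ and $e^{2C_0 u^2\tau^{-1}}$ in \eqref{eq:t1} come from; without this optimization the sharp constant $\tfrac43$ is not automatic. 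Lastly, your hand-wringing over \eqref{eq:t2} is unnecessary: the worst case of \eqref{eq:t1} is simply $\tau\to1$ (for $u$ large the term $-\tfrac43 u^3\tau^{-3/2}$ dominates the corrections as $\tau$ decreases), which gives $-\tfrac43 u^3+O(u^2)$ directly with no issue about $1+3\theta/2$ versus $2$.
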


Notice that \eqref{eq:t1} provides a stronger bound compared to \eqref{eq:t2} for small $\tau$ which is expected as the transversal fluctuation should grow with $\tau$. Indeed, for $\tau \ll 1$, one expects even stronger bounds, see Theorem~3 of~\cite{BSS19}. For the proof of Theorem~\ref{ThmUB6Bis}, \eqref{eq:t2} would suffice but we record the stronger estimate \eqref{eq:t1} as it would be used in the next subsection.

\begin{proof}[Proof of Lemma~\ref{lemUB3B}]
We start with a simple rough bound. Notice that since the geodesics are almost surely unique, by planarity they cannot cross each other multiple times. Consequently, if $\Gamma^*_N(2N)\leq A(2N)^{3}$ for some $A>0$, the geodesic $\Gamma^{*}_{N}$ lies to the left of the point-to-point geodesic from $I(A)$ to $J(A)$. Therefore, the maximal transversal fluctuation of the point-to line geodesic can be upper bounded by the sum of the fluctuation at the endpoint plus the maximal transversal fluctuation of a point-to-point geodesic. Similar arguments will be used multiple times in the sequel and will be referred to as the ordering of geodesics. It follows that
\begin{equation}\label{Oub5}
\begin{aligned}
\Pb(\Gamma^*_N(t)\geq C_1 u\tau^{-1/2} (2N)^{2/3})&\leq \Pb(\Gamma^*_N(2N)\geq \tfrac12 C_1 u\tau^{-1/2} (2N)^{2/3})\\
&+\Pb(\Gamma_N(t)\geq \tfrac12 C_1 u\tau^{-1/2} (2N)^{2/3}).
\end{aligned}
\end{equation}
Applying the bounds of Lemma~\ref{lemUB3} and Lemma~\ref{lemUB5} with the constant $C_1$ large enough, we get that
\begin{equation}\label{Oub5b}
\Pb(\Gamma^*_N(t)\geq C_1 u\tau^{-1/2} (2N)^{2/3})\leq e^{-\frac43 u^3 \tau^{-3/2}}.
\end{equation}

Next we need to bound the probability that $\Gamma^*_N(t)$ is in $[u(2N)^{2/3},C_1u\tau^{-1/2}(2N)^{2/3}]$. Define $K(v)=(t/2,t/2)+v(2N)^{2/3}(1,-1)$. Then, for any $S\in \R$,
\begin{multline}\label{eqB4}
\Pb(C_1 u\tau^{-1/2}(2N)^{2/3}>\Gamma^*_N(t)\geq u(2N)^{2/3})
\leq \Pb(L_{(0,0),\LL_{2N}}\leq S)\\
+\Pb\Big(\sup_{u\leq v\leq C_1 u\tau^{-1/2}} (L_{(0,0),K(v)}+\tilde L_{K(v),\LL_{2N}})>S\Big),
\end{multline}
where $\tilde L_{K(v),\LL_{2N}}=L_{K(v),\LL_{2N}}-\omega_{K(v)}$ is the LPP without the first point\footnote{The tail bounds in Corollary~\ref{a:LPP} clearly continue to hold even after removing the random variable at $K(v)$ which is ${\rm Exp}(1)$-distributed. The advantage is that in this way $L_{(0,0),K(v)}$ and $\tilde L_{K(v),\LL_{2N}}$ are independent random variables.}.
We set
\begin{equation}
S=4N-a u^2 2^{4/3} N^{1/3}.
\end{equation}
By Corollary~\ref{CorUB2}, setting $a=C_0 \tau^{-1/2}u^{-1}\ll N^{2/3}$ with $C_0$ as in \eqref{eq2.12}, we see that
\begin{equation}
\Pb(L_{(0,0),\LL_{2N}}\leq S)\leq e^{-\frac43 u^3 \tau^{-3/2}}.
\end{equation}
It remains to bound the last term in \eqref{eqB4}. We have
\begin{equation}\label{eqB6}
\begin{aligned}
&\Pb\Big(\sup_{u\leq v\leq C_1 u \tau^{-1/2}} (L_{(0,0),K(v)}+\tilde L_{K(v),\LL_{2N}})>S\Big)\\
&\leq \sum_{k=0}^{C_1 u/(\sqrt{\tau}\delta)-u-1} \hspace{-1em}\Pb\Big(\sup_{u+k \delta\leq v\leq u+(k+1)\delta} (L_{(0,0),K(v)}+\tilde L_{K(v),\LL_{2N}})>S\Big)\\
&\leq \sum_{k=0}^{C_1 u/(\sqrt{\tau}\delta)-u-1} \hspace{-1em}\Pb\Big(\sup_{v\geq u+k\delta} L_{(0,0),K(v)}+\sup_{u+k\delta\leq v\leq u+(k+1)\delta}\tilde L_{K(v),\LL_{2N}}>S\Big),
\end{aligned}
\end{equation}
where $\delta>0$ will be chosen later. Note that the two supremums above are independent. Denoting $X_k=\sup_{v\geq u+k\delta} L_{(0,0),K(v)}$ and $Y=\sup_{-\delta/2\leq v\leq \delta/2}\tilde L_{K(v),\LL_{2N}}$, note also that $\sup_{u+k\delta\leq v\leq u+(k+1)\delta}\tilde L_{K(v),\LL_{2N}}$ has the same law as $Y$ for each $k$. Hence,
\begin{equation}\label{eqB7}
\eqref{eqB6} =\sum_{k=0}^{C_1 u/(\sqrt{\tau}\delta)-u-1} \Pb(X_k+Y>S) \leq \frac{C_1 u}{\sqrt{\tau}\delta} \Pb(X_0+Y>S)
\end{equation}
since $X_k\leq X_0$ for any $k\geq 0$.

As $X_0$ and $Y$ are independent, it is expected that the leading term should behave as
\begin{equation}\label{App9}
\Pb(X_0>S^*) \Pb(Y>S-S^*)
\end{equation}
with $S^*$ chosen such that \eqref{App9} is maximal. Since we want to minimize over a finite number of points (not going to infinity as $N$ does), we instead look for the maximum of
\begin{equation}\label{App9B}
\Pb(X_0>S^*) \Pb(Y>S-S^*-\eta 2^{4/3} N^{1/3})
\end{equation}
for some small positive discretization step $\eta$; see Figure~\ref{FigDiscretization}. The natural scale of the fluctuation of $X_0$ is $\tau^{1/3} 2^{4/3}N^{1/3}$ and the one for $Y$ is $(1-\tau)^{1/3}2^{4/3}N^{1/3}$, so we choose $\eta$ at most $\tfrac14\min\{(1-\tau)^{1/3},\tau^{1/3}\}$.
\begin{figure}[t!]
 \centering
 \includegraphics[height=6cm]{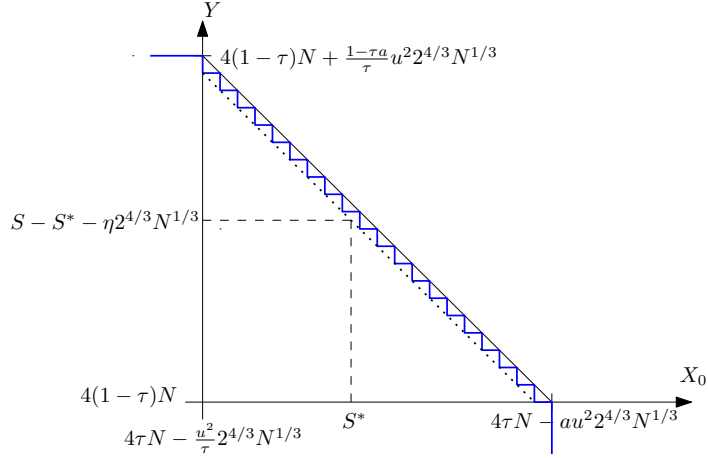}
\caption{The probability $\Pb(X_0+Y>S)$ is smaller than the sum of the probabilities $\Pb(X_0\geq A_1, Y\geq A_2)$ with $(A_1,A_2)$ being the discretized points on $A_1+A_2=S-\eta 2^{4/3}N^{1/3}$.}
\label{FigDiscretization}
\end{figure}

We want to discretize the interval $[4\tau N-\frac{u^2}{\tau}2^{4/3}N^{1/3},4\tau N-a u^2 2^{4/3}N^{1/3}]$ into pieces of size $\eta 2^{4/3} N^{1/3}$. The interval shall be non-empty, which can be ensured if $u$ is not too small. For that purpose let us assume that $u>\max\{1,\frac43 \sqrt{\tau}C_0\}$. Define the number of discretized points and then $\eta$ by
\begin{equation}
M=\left\lceil u^2(\tau^{-1}-a)\frac{4}{\min\{(1-\tau)^{1/3},\tau^{1/3}\}}\right\rceil,
\quad \eta=u^2(\tau^{-1}-a)\frac{1}{M}.
\end{equation}
Our assumption on $u$ ensures that $\frac1\tau-a\geq \frac{1}{4\tau}$, or equivalently $\tau a\leq \frac34$, as well as $\eta/u^2\leq \frac14 2^{-1/3}$. Therefore $\tau a+\eta/u^2<1$.

Let $S_1=4\tau N-\frac{u^2}{\tau}2^{4/3}N^{1/3}$ and $S_2=S-S_1=4(1-\tau)N+\frac{1-\tau a}{\tau}u^2 2^{4/3}N^{1/3}$, $S_3=4\tau N-a u^22^{4/3}N^{1/3}$. Let $S^*$ be the maximizer of \eqref{App9B}. Then
\begin{equation}\label{eqB17}
\begin{aligned}
\eqref{eqB7} &\leq \frac{C_1 u}{\sqrt{\tau}\delta} \sum_{k=0}^{M-1}\Pb\left(X_0>S_1+k\eta 2^{4/3}N^{1/3},Y>S_2- (k+1)\eta2^{4/3}N^{1/3}\right)\\
&+\frac{C_1 u}{\sqrt{\tau}\delta} \Pb\left(X_0>S_3\right)+\frac{C_1 u}{\sqrt{\tau}\delta}\Pb\left(Y>S_2\right)\\
&\leq\frac{C_1 u}{\sqrt{\tau}\delta} M\Pb\left(X_0>S^*\right) \Pb\left(Y>S-S^*- \eta 2^{4/3} N^{1/3}\right)\\
&+\frac{C_1 u}{\sqrt{\tau}\delta} \Pb\left(X_0>S_3\right)+\frac{C_1 u}{\sqrt{\tau}\delta}\Pb\left(Y>S_2\right).
\end{aligned}
\end{equation}

Instead of finding $S^*$ that maximizes \eqref{App9B}, we look to find an upper bound of \eqref{App9B} by maximizing the product of the upper bounds of the individual terms. For this, we take $\delta=(1-\tau)^{2/3}$. Then by
rescaling Lemma~\ref{lemUB2}, for $s_2\ll (\tau N)^{2/9}$ and $u\ll (\tau N)^{1/9}$
\begin{equation}\label{eqB10}
\Pb\Big(X_0>4\tau N-\frac{u^2}{\tau} 2^{4/3} N^{1/3}+s_1 \tau^{1/3} 2^{4/3}N^{1/3}\Big)\leq C e^{-\frac43 s_1^{3/2}}
\end{equation}
and by rescaling Proposition~\ref{PropUB5}, for $s_2\ll (1-\tau)^{2/3}N^{2/3}$, we get
\begin{equation}\label{eqB9}
\Pb\Big(Y>4(1-\tau) N+s_2(1-\tau)^{1/3} 2^{4/3}N^{1/3}\Big)\leq C s_2 e^{-\frac43 s_2^{3/2}}.
\end{equation}
Thus we need to maximize the product of the terms in \eqref{eqB10} and \eqref{eqB9}. All the terms in the sum in \eqref{eqB17} corresponds to $s_1$ and $s_2$ being non-negative and also of $\Or(\frac{u^2}{\tau \min\{\tau^{1/3},(1-\tau)^{1/3}\}})$. Therefore we can ignore the polynomial pre-factor in \eqref{eqB9} and look to find $s_1,s_2\geq 0$ such that
\begin{equation}
-\frac{u^2}{\tau}+s_1 \tau^{1/3}+s_2(1-\tau)^{1/3}=-(a u^2+\eta),\quad s_1^{3/2}+s_2^{3/2}\textrm{ is minimal}.
\end{equation}

Denote $\tilde a=a+\eta /u^2$, which satisfied $\tau\tilde a<1$ by the above assumptions. Plugging in the value of $s_2$ as function of $s_1$ into $s_1^{3/2}+s_2^{3/2}$ and computing its minimum through the derivatives we get
\begin{equation}
s_1=\left(\tfrac{1}{\tau}-\tilde a\right) \tau^{2/3}u^2,\quad s_2= \left(\tfrac{1}{\tau}-\tilde a\right) (1-\tau)^{2/3} u^2,\quad s_1^{3/2}+s_2^{3/2}=u^3\left(\tfrac{1}{\tau}-\tilde a\right)^{3/2}.
\end{equation}
With this choice of $s_1$ and $s_2$, we write with a minor abuse of notation
\begin{equation}
\begin{aligned}
&S^*=4\tau N-\frac{u^2}{\tau} 2^{4/3} N^{1/3}+s_1\tau^{1/3}2^{4/3}N^{1/3},\\
&S-S^*-\eta 2^{4/3}N^{1/3}=4(1-\tau)N+s_2(1-\tau)^{1/3}2^{4/3}N^{1/3},
\end{aligned}
\end{equation}
and
\begin{equation}\label{eqB12}
\Pb\left(X_0>S^*\right) \Pb\left(Y>S-S^*-\eta 2^{4/3}N^{1/3}\right)\leq C u^{2+4\theta/3} e^{-\frac43 u^3 \left(\tfrac{1}{\tau}-\tilde a\right)^{3/2}}
\end{equation}
for some constant $C>0$, where we have used the a priori bound on $s_2$ together with the assumption on $\tau$ to get the polynomial pre-factor. For $\tau \tilde a <1$, $\left(\tau^{-1}-\tilde a\right)^{3/2}\geq \tau^{-3/2}(1-\tfrac32 \tau a-\frac38 u^{-2})$ so that we get
\begin{equation}\label{eq2.36}
\eqref{eqB12} \leq C u^{2+4\theta/3} e^{-\frac43 u^3 \tau^{-3/2}} e^{\frac12 u \tau^{-3/2}} e^{2 u^2 C_0 \tau^{-1}}.
\end{equation}
This is the $\tau$-dependent bound which is useful for small enough $\tau$, but the exponent is minimal when $\tau\to 1$.

Finally, notice that the bound on $\Pb(X_0>S_3)$ (resp.\ $\Pb(Y>S_2)$) corresponds to the bound with the value $s_2=0$ (resp.\ $s_1=0$), thus they are also smaller than \eqref{eqB12}. Thus we have shown that $\eqref{eqB17}\leq (M+2)\frac{C_1 u}{\sqrt{\tau}\delta} \times \eqref{eq2.36}$. The prefactor is only a polynomial in $u$ (using again the assumptions on $\tau$) and can be absorbed in the $u^2$ term in the exponent by adjusting the constant. Thus we have proved \eqref{eq:t1}, and \eqref{eq:t2} follows by observing that $\tau\to 1$ is the worst case. The condition $u\ll N^{1/(9+3\theta)}$ ensures that all the conditions on $s_1$, $s_2$, $u$ mentioned above are satisfied.
\end{proof}

We can now prove Theorem~\ref{ThmUB6Bis}.
\begin{proof}[Proof of Theorem~\ref{ThmUB6Bis}]
We shall prove the result for $u$ sufficiently large, the result for all $u>1$ shall follow by adjusting the constant $c$. Let us set $\e=\delta u^{-3/2}$ for some $\delta>0$ to be chosen later ($\delta$ will be small but fixed and in particular will not depend on $u$ or $N$). Without loss of generality let us also assume that $\e N$ and $1/\e$ are both integers. Let us define the sequence of points
\begin{equation}
v_0=I(u-1), \cdots, v_{j}=I(u-1)+(j\e N, j\e N), \cdots, v_{\e^{-1}}=J(u-1).
\end{equation}
Let $A_{j}$ denote the event that $\Gamma^*_{N}(2j \e N)\ge (u-1)(2N)^{2/3}$ for $j=1,2, \ldots, \e^{-1}$, and let $B_{j}$ denote the event that $\sup _{t}\Gamma_{v_{j-1},v_{j}} \ge u (2N)^{2/3}$ for $j=1,2,\ldots, \e^{-1}$ where $\Gamma_{v_{j-1},v_{j}}$ denotes the geodesic from $v_{j-1}$ to $v_{j}$. By ordering of geodesics, it follows that on
\begin{equation}
\Big(\bigcap _{j} A_j^{c}\Big) \cap \Big(\bigcap _{j} B_j^{c}\Big)
\end{equation}
one has $\sup_{0\leq t\leq 2N} \Gamma^*_{N}(t) \le u(2N)^{2/3}$; see Figure~\ref{FigLocalization}.
\begin{figure}[t!]
 \centering
 \includegraphics[height=5cm]{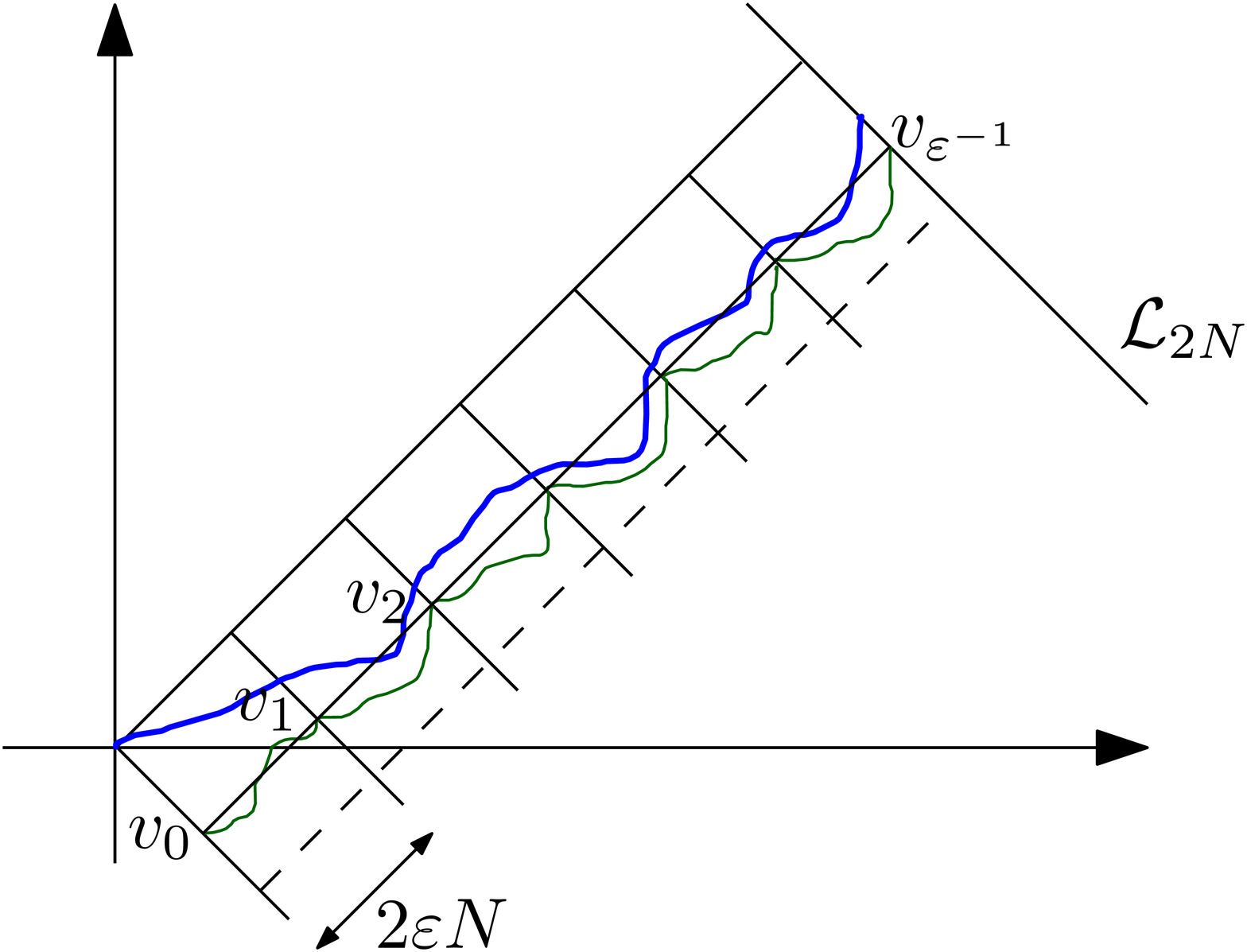}
\caption{The thick blue line is the geodesic $\Gamma^*_N$. It passes to the left of the $v_1,v_2,\ldots,v_{\e^{-1}}$. The green thin lines are the point-to-point geodesics from $v_j$ to $v_{j+1}$, $j=0,\ldots,\e^{-1}-1$, which stay to the left of the dashed line joining $I(u)$ and $J(u)$.}
\label{FigLocalization}
\end{figure}
Hence,
\begin{equation}
\Pb\Big(\sup_{0\leq t\leq 2N}\Gamma^*_N(t)\geq u(2N)^{2/3}\Big)\leq \sum_{j} \Pb(A_j)+ \sum_{j}\Pb(B_j).
\end{equation}
It follows from Lemma~\ref{lemUB3B} that for each $j$,
\begin{equation}
\Pb(A_{j}) \leq e^{c(u-1)^2} e^{-\frac{4}{3}(u-1)^3}\leq e^{c' u^2} e^{-\frac43 u^3}
\end{equation}
for some new constant $c'>0$.
Notice now that
\begin{equation}
\Pb(B_{j})=\Pb\Big(\sup_{0\leq t\leq 2\e N}\Gamma_{\e N}(t) \ge \delta^{-2/3}u(2\e N)^{2/3}\Big).
\end{equation}
We now use $u\ll N^{2/3}$ (which implies $u\ll (\e N)^{1/3}$) and choose $\delta$ sufficiently small such that by Lemma~\ref{lemUB5} we have
\begin{equation}
\Pb(B_{j}) \le e^{-\frac{4}{3}u^3}.
\end{equation}
With this choice it follows that
\begin{equation}
\Pb\Big(\sup_{0\leq t\leq 2N}\Gamma^*_N(t)\geq u(2N)^{2/3}\Big)\le 2\e^{-1}e^{c u^2}e^{-\frac{4}{3}u^3}.
\end{equation}
Since $\e^{-1}=\Or(u^{3/2})$, the result follows by adjusting the value of $c$.
\end{proof}

\subsection{Coalescence probability}
Consider the point-to-line geodesics $\Gamma^*_{N}$ and $\tilde{\Gamma}^{*}_{N}$ to $\LL_{2N}$ from $(0,0)$ and $I(u)$ respectively. Owing to the almost sure uniqueness of geodesics, if $\Gamma^*_{N}$ and $\tilde{\Gamma}^{*}_{N}$ meet, they \emph{coalesce} almost surely. Coalescence of geodesics is an important phenomenon in random growth models including first and last passage percolation and has attracted a lot of attention. For exponential LPP, for point-to-point geodesics started at distinct points and ending at a common far away point (or semi-infinite geodesics going in the same direction) tail estimates for distance to coalescence has been obtained; see~\cite{BSS19, Pim16, Zha20} for more on this. For point-to-line geodesics started at initial points that are far (on-scale), one expects the probability of coalescence to be small. Our next result proves an upper bound to this effect and is of independent interest.

\begin{thm}\label{ThmCrossings}
 In the above set-up, for $N^{1/14}\gg u>1$
\begin{equation}
\Pb(\Gamma^*_N\cap \tilde{\Gamma}^{*}_N\neq \emptyset)\leq e^{-\frac13 u^3+c u^2}
\end{equation}
for some constant $c>0$.
\end{thm}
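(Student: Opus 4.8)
The plan is to reduce the coalescence event to the statement that the two point-to-line geodesics have a common endpoint on $\LL_{2N}$, and then to decorrelate the endpoint behaviour of $\Gamma^*_N$ from that of $\tilde\Gamma^*_N$. First I would observe that, by the almost sure uniqueness of geodesics, if $\Gamma^*_N$ and $\tilde{\Gamma}^*_N$ share a vertex $K$, then the restriction of each to the part after $K$ must coincide with the (unique) point-to-line geodesic rooted at $K$ (otherwise one could splice in a strictly better continuation), so the two restrictions agree; in particular $\Gamma^*_N$ and $\tilde\Gamma^*_N$ then end at the same point of $\LL_{2N}$. Conversely, the geodesics from $(0,0)$ and from $I(u)$ to a common point $J(v)$ automatically share the vertex $J(v)$. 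Hence
\[
\Pb(\Gamma^*_N\cap\tilde\Gamma^*_N\neq\emptyset)=\sum_{v}\Pb\big(\Gamma^*_N \text{ ends at } J(v)\text{ and }\tilde\Gamma^*_N \text{ ends at } J(v)\big),
\]
the sum being over the lattice points of $\LL_{2N}$. Grouping endpoints into windows of transversal width $(2N)^{2/3}$, indexed by an integer $k$ so that $J(v)$ sits at position $\approx k(2N)^{2/3}$, the windows with $k\notin[0,u]$ are controlled directly by Lemma~\ref{lemUB3} and its left--right reflection: for such $k$ at least one of the two geodesics has endpoint displacement $\geq u(2N)^{2/3}$, so each such term is $\lesssim e^{cu^2}e^{-\frac43 u^3}$, and the resulting super-summable tail over all such $k$ is still $\leq e^{c'u^2}e^{-\frac13 u^3}$. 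It therefore remains to bound, for each integer $k\in[0,u]$, the probability that both geodesics end in the $k$-th window.

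For a fixed window at position $\approx k(2N)^{2/3}$ the key device is to introduce an auxiliary cut $T_k$ at transversal position $k(2N)^{2/3}$ (the straight line joining $I(k)$ and $J(k)$), which splits the weights $\{\omega_{i,j}\}$ into the two independent families at positions $<k(2N)^{2/3}$ and positions $\ge k(2N)^{2/3}$. On the event that $\Gamma^*_N$ ends at $J(k)$ without ever crossing strictly to the right of $T_k$, the path $\Gamma^*_N$ is admissible for the last passage problem from $(0,0)$ to $\LL_{2N}$ constrained to stay weakly left of $T_k$, so it realises that constrained (hence left-weight-measurable) passage time and its maximiser is pinned to the wall at $J(k)$; re-running the one-point comparison underlying Lemma~\ref{lemUB3} in this constrained geometry --- comparing the constrained passage time to $J(k)$, which carries a parabolic deficit $\approx k^2\,2^{4/3}N^{1/3}$, with the constrained passage time to $J(0)$, which is left-measurable and typically equals the unconstrained one --- bounds the probability of this left-measurable event by $e^{ck^2}e^{-\frac43 k^3}$. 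The complementary possibility, that $\Gamma^*_N$ overshoots $T_k$ and returns, is a transversal-fluctuation event of magnitude $\ge(k+\ell)(2N)^{2/3}$ and is absorbed using Theorem~\ref{ThmUB6Bis} together with a union bound over the overshoot $\ell\ge1$, each term being smaller. Symmetrically, the event that $\tilde\Gamma^*_N$ ends at $J(k)$ splits into a right-weight-measurable event of probability $\le e^{c(u-k)^2}e^{-\frac43(u-k)^3}$ --- now $J(k)$ is the wall seen from $I(u)$, at transversal distance $(u-k)(2N)^{2/3}$ --- plus overshoot terms. Since the left and right weight families are independent, multiplying gives
\[
\Pb\big(\Gamma^*_N,\tilde\Gamma^*_N \text{ both end in window } k\big)\le e^{c(k^2+(u-k)^2)}e^{-\frac43(k^3+(u-k)^3)}+(\text{overshoot terms}).
\]

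Finally I would optimise over $k$: on $[0,u]$ the quantity $k^3+(u-k)^3$ is minimised at $k=u/2$, where it equals $u^3/4$, so every window contributes at most $e^{cu^2}e^{-\frac13 u^3}$; summing over the at most $u+1$ relevant windows (and restoring the $k\notin[0,u]$ and overshoot contributions, all of the same or smaller order) multiplies this by a polynomial in $u$, which is absorbed into the prefactor by enlarging $c$. This is exactly the claimed bound $\Pb(\Gamma^*_N\cap\tilde\Gamma^*_N\neq\emptyset)\le e^{-\frac13 u^3+cu^2}$, and it exhibits the matching heuristic: the cheapest way to coalesce is for both geodesics to make a transversal excursion of size $u/2$ and meet near the endpoint, each excursion costing $e^{-\frac43(u/2)^3}=e^{-\frac16 u^3}$.

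The main obstacle is precisely this decorrelation step: a priori the endpoints of $\Gamma^*_N$ and $\tilde\Gamma^*_N$ are strongly dependent --- on the coalescence event they are literally equal --- so a crude union bound recovers only $\max$ rather than the sum of the two exponents, i.e. the suboptimal $e^{-\frac16 u^3}$. Placing the cut $T_k$ exactly at the window under consideration is what turns ``$\Gamma^*_N$ ends at $J(k)$'' and ``$\tilde\Gamma^*_N$ ends at $J(k)$'' into events measurable with respect to complementary, hence independent, weight families, but only after stripping off the events that either geodesic crosses $T_k$ and after checking that the one-sided restriction does not degrade the one-point upper-tail estimates with the correct leading constant $\frac43$. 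Carrying out this constrained one-point analysis, and verifying that the hypothesis $N^{1/14}\gg u$ is enough for all the tail bounds of Appendix~\ref{a:LPP} invoked along the way (including those used inside Lemmas~\ref{lemUB3}--\ref{lemUB3B} and Theorem~\ref{ThmUB6Bis}), is the technical heart of the argument; an alternative route to the same decorrelation is to phrase the pre-coalescence pieces of $\Gamma^*_N$ and $\tilde\Gamma^*_N$, which are vertex-disjoint, as a disjoint occurrence of two increasing events and apply the BK inequality.
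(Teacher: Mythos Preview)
Your reduction to the endpoint is correct and elegant: coalescence is equivalent to $\Gamma^*_N(2N)=\tilde\Gamma^*_N(2N)$, so one may decompose by the common terminal window $k\in[0,u]$. The paper instead decomposes by the \emph{first meeting point} $(\tau,v)$ in a space--time grid and applies the BK inequality to the disjoint pre-coalescence pieces together with a third, independent, post-coalescence segment-to-line piece (Lemmas~\ref{AtauvBound}--\ref{LemEventB}). Your closing remark about BK on the vertex-disjoint pre-coalescence pieces is precisely this route; but the argument you actually develop --- the spatial cut $T_k$ and independence of the two half-planes --- has a structural gap.

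The problem is the overshoot terms. Fix the worst window $k\approx u/2$. You split
\[
\{\text{both end in window }k\}\subseteq\big(E_k^{\text{left}}\cap E_k^{\text{right}}\big)\cup\big(\{\Gamma^*_N\text{ crosses }T_k\}\cap B_k\big)\cup\big(A_k\cap\{\tilde\Gamma^*_N\text{ crosses }T_k\}\big).
\]
The first piece factorises. But for the second, the event $\{\sup_t\Gamma^*_N(t)\ge k(2N)^{2/3}\}$ and the event $B_k=\{\tilde\Gamma^*_N(2N)\in\text{window }k\}$ live on the \emph{same} weight field and there is no cut that separates them: Theorem~\ref{ThmUB6Bis} only gives the marginal $\Pb(\sup_t\Gamma^*_N(t)\ge k)\le e^{-\frac43 k^3+ck^2}$, and Lemma~\ref{lemUB3} gives $\Pb(B_k)\le e^{-\frac43(u-k)^3+c(u-k)^2}$, so the best you can extract from the overshoot piece without further decorrelation is
\[
\min\bigl(e^{-\frac43 k^3},\,e^{-\frac43(u-k)^3}\bigr)\big|_{k=u/2}=e^{-\frac16 u^3},
\]
which is strictly weaker than the target $e^{-\frac13 u^3}$. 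Shifting the cut to $T_{k+\ell}$ to capture an overshoot of size $\ell$ does make the $\Gamma^*_N$-event measurable on one side, but then $\tilde\Gamma^*_N$ must cross $T_{k+\ell}$ to reach window $k$, so the right-side event is no longer right-measurable either; the two cuts cannot be reconciled. This is exactly why the paper abandons spatial separation and instead uses BK on the genuinely disjoint pre-meeting paths.

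A second, smaller issue is the bound $\Pb(E_k^{\text{left}})\le e^{ck^2-\frac43 k^3}$ itself. Your comparison of the constrained passage time to window $k$ against the constrained passage time to $J(0)$ implicitly needs $L^{\text{left}}_{(0,0),(N,N)}\ge 4N-C_0 k\,2^{4/3}N^{1/3}$ with probability $1-e^{-\frac43 k^3}$; but equality $L^{\text{left}}_{(0,0),(N,N)}=L_{(0,0),(N,N)}$ fails exactly when the point-to-point geodesic $\Gamma_N$ crosses $T_k$, and Lemma~\ref{lemUB5} controls that only with an unspecified constant $c$, not $\frac43$. So even the main term would, as written, degrade to $e^{-\frac{c}{4}u^3}$ after optimising over $k$.
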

The rest of this section deals with the proof of Theorem~\ref{ThmCrossings}. We divide it into several smaller results. As always, we shall assume without loss of generality that $u$ is sufficiently large, extending the results to all $u>1$ is achieved by adjusting constants.

First of all, due to Theorem~\ref{ThmUB6Bis}, the probability that the two geodesics in the statement of Theorem~\ref{ThmCrossings} meet outside the rectangle $\mathcal{R}(u)$ with corners $(0,0)$, $I(u)$, $J(u)$, $(N,N)$ is smaller than the estimate we want to prove. Thus we can restrict to bounding the probability that the two geodesics intersect in $\mathcal{R}(u)$; a stronger result is proved in Lemma~\ref{PropNotCrossingFar} below. As the number of points in $\mathcal{R}(u)$ where the geodesics $\tilde{\Gamma}^{*}_{N}$  and $\Gamma^*_N$ could meet is $\Or(N^{5/3})$, we need to discretize space. We therefore divide $\mathcal{R}(u)$ into a grid of size $\e N\times (2 \e N)^{2/3}$, where $\e$ will be taken small enough (but not too small, namely of order $u^{-2}$); see Figure~\ref{FigGrid}.
\begin{figure}[t!]
 \centering
 \includegraphics[height=6cm]{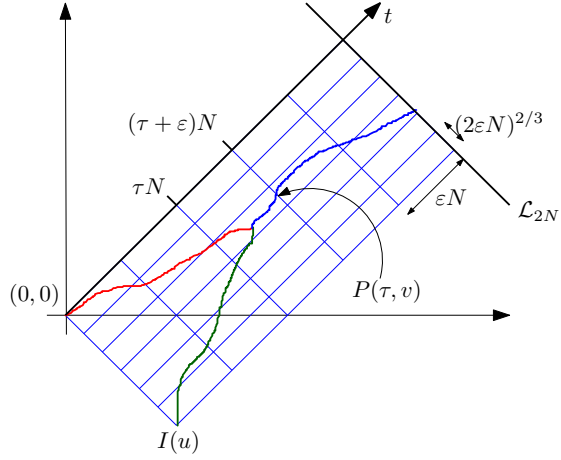}
\caption{Illustration of the grid as discretization of space-time. In the space direction the length is $(2\e N)^{2/3}$ and in the time direction $\e N$.}
\label{FigGrid}
\end{figure}

For $\tau$ an integer multiple of $\varepsilon$, let $A(\tau,v)$ be the event that the first intersection of $\Gamma^*_N$ and $\tilde{\Gamma}^{*}_N$ occurs at time $t \in (2\tau N,2\tau N+2\e N]$, and they then cross the anti-diagonal grid segment (of length $(2 \e N)^{2/3}$) at time $2\tau N+2\e N$ with mid-point given by
\begin{equation}
P(\tau,v)=(\tau N+\e N+v(2N)^{2/3},\tau N+\e N-v (2N)^{2/3}).
\end{equation}
Notice that, the number of choices of $\tau$ and $v$ is $\Or (\varepsilon^{-5/3})$, which by our choice of $\e$ is at most a polynomial of $u$.
Thus we need to prove the $\Pb (A(\tau,v))$ is at most $e^{-\frac13 u^3+c u^2}$ for any $\tau,v$. The proof of Theorem~\ref{ThmCrossings} is completed by taking a union bound.

Our first rough estimate deals with values of $v$ which are close to $0$ or $u$ and also small values of $\tau$. The basic idea is that in these cases the probability bounds coming from considering the transversal fluctuation of a single geodesic is sufficient.

\begin{lem}\label{PropNotCrossingFar}
Let $N^{1/14}\gg u >1$. For any $v$ satisfying $\min\{v,u-v\}\leq u (1-2^{-2/3})-\e^{2/3}$ and for any $\tau\leq 2^{-2/3}-\e$,
\begin{equation}
\Pb(A(\tau,v))\leq e^{-\frac13 u^3+c u^2}
\end{equation}
for some constant $c>0$.
\end{lem}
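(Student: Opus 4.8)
\textbf{Proof plan for Lemma~\ref{PropNotCrossingFar}.}

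The plan is to reduce the event $A(\tau,v)$ to an excursion event for a \emph{single} point-to-line geodesic and then apply the localization estimates of Lemma~\ref{lemUB3B} (and, for the small-$\tau$ regime, Lemma~\ref{lemUB5} via the ordering of geodesics). The geometric observation is that on $A(\tau,v)$ both geodesics $\Gamma^*_N$ and $\tilde\Gamma^*_N$ pass through the grid segment of length $(2\e N)^{2/3}$ centred at $P(\tau,v)$ at time $2\tau N+2\e N$; in particular each of them satisfies $\Gamma^*_N(2\tau N+2\e N)\ge (v-\e^{2/3})(2N)^{2/3}$ and $\tilde\Gamma^*_N(2\tau N+2\e N)\le (v+\e^{2/3})(2N)^{2/3}$. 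Since $\tilde\Gamma^*_N$ starts at $I(u)$, the second inequality says that the geodester from $I(u)$ has moved a transversal distance of at least $(u-v-\e^{2/3})(2N)^{2/3}$ by time $2\tau N+2\e N$; by reflection symmetry this has the same probability as the event that $\Gamma^*_N(2\tau N+2\e N)\ge (u-v-\e^{2/3})(2N)^{2/3}$. Thus whichever of $v$ or $u-v$ is the smaller one, the corresponding geodesic must have made a transversal excursion to distance at least $(\max\{v,u-v\}-\e^{2/3})(2N)^{2/3}\ge (u(1-2^{-2/3})-2\e^{2/3})(2N)^{2/3}$ hmm wait --- let me restate: by the hypothesis $\min\{v,u-v\}\le u(1-2^{-2/3})-\e^{2/3}$, we have $\max\{v,u-v\}\ge u\,2^{-2/3}+\e^{2/3}$, so the relevant geodesic reaches transversal position at least $u\,2^{-2/3}(2N)^{2/3}$ at the time $t=2\tau N+2\e N$.

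The next step is to feed this into Lemma~\ref{lemUB3B}. Write $t=2\tau' N$ with $\tau'=\tau+\e$; the displacement reached is at least $\tilde u(2N)^{2/3}$ with $\tilde u:=u\,2^{-2/3}$. Because $\tau\le 2^{-2/3}-\e$ we have $\tau'\le 2^{-2/3}<1$, and since $\e$ is of order $u^{-2}$ we have $\tau'\ge \e\ge u^{-\theta}$ for a suitable fixed $\theta$ (e.g.\ $\theta=2$, after absorbing constants), so the hypotheses of Lemma~\ref{lemUB3B} are met for $N$ large relative to a power of $u$; the bound $N^{1/14}\gg u$ is comfortably enough. Applying \eqref{eq:t2} of Lemma~\ref{lemUB3B} at displacement $\tilde u$ gives
\begin{equation}
\Pb(A(\tau,v))\le \Pb\big(\Gamma^*_N(t)\ge \tilde u(2N)^{2/3}\big)\le e^{-\frac43\tilde u^3+c'\tilde u^2}
= e^{-\frac43\cdot 2^{-2}u^3+c'2^{-4/3}u^2}=e^{-\frac13 u^3+c u^2},
\end{equation}
which is exactly the claimed bound; the $-\e^{2/3}$ correction in the hypothesis only changes $\tilde u$ by an additive $O(\e^{2/3})=O(u^{-4/3})$, whose effect on the exponent is $O(u^{5/3})$, absorbable into the $cu^2$ term after adjusting $c$. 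For the regime where $\tau$ is itself small (say $\tau'\le u^{-\theta}$, where \eqref{eq:t2} is not directly available because its hypothesis $\min\{\tau,1-\tau\}\ge u^{-\theta}$ fails), I would instead invoke the ordering of geodesics exactly as in the proof of Lemma~\ref{lemUB3B}: bound $\Gamma^*_N(t)$ by the sum of $\Gamma^*_N(2N)$ (controlled by Lemma~\ref{lemUB3}) and the transversal fluctuation of a point-to-point geodesic over a time window of length $t$ (controlled by Lemma~\ref{lemUB5}), and check that for such small $\tau$ the single-geodesic fluctuation to distance $\sim \tilde u(2N)^{2/3}$ over a short time window is already $\le e^{-\frac43\tilde u^3}$ with room to spare. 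Since the number of $(\tau,v)$ pairs is only polynomial in $u$ (by the choice $\e\sim u^{-2}$), this part does not affect the final union bound in Theorem~\ref{ThmCrossings}.

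The main obstacle, and the only place requiring genuine care, is the bookkeeping at the two boundaries of the hypothesis: first, that the discretization error $\e^{2/3}$ and the one-step-ahead shift from $\tau$ to $\tau'=\tau+\e$ really do get absorbed into the $cu^2$ term rather than degrading the leading constant $\tfrac13$ --- this is fine precisely because $\e=O(u^{-2})$ makes both corrections lower order in the exponent; and second, matching the two regimes $\tau'\gtrless u^{-\theta}$ so that every admissible $(\tau,v)$ in the statement is covered by exactly one of the two arguments. Everything else is a direct citation of the localization lemmas already proved, plus the reflection symmetry of the model about the diagonal, which swaps the roles of the geodesic from $(0,0)$ and the geodesic from $I(u)$.
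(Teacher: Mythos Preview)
Your argument covers only part of the lemma. The statement must be read as two \emph{separate} sufficient conditions (this is how the paper uses it: Lemma~\ref{AtauvBound} handles the complementary range where $v$ is in the middle \emph{and} $\tau\ge 2^{-2/3}-\e$, so Lemma~\ref{PropNotCrossingFar} has to cover the union of ``$v$ extreme'' and ``$\tau$ small''). You prove only the $v$-extreme case, and even there you implicitly assume $\tau\le 2^{-2/3}-\e$ as well. The case $\tau\le 2^{-2/3}-\e$ with $v$ in the middle (say $v\approx u/2$) is not handled: for such $v$ the transversal displacement of either geodesic is only about $u/2$, and \eqref{eq:t2} gives $e^{-\frac43(u/2)^3}=e^{-u^3/6}$, which is too weak.

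The missing idea is this. On $A(\tau,v)$ the two geodesics have coalesced by time $2(\tau+\e)N$ and hence occupy the \emph{same} point there; that point is either $\ge\tfrac12 u(2N)^{2/3}$ or $\le\tfrac12 u(2N)^{2/3}$, so one of the two geodesics has displacement at least $\tfrac12 u(2N)^{2/3}$ at time $\tau'=\tau+\e\le 2^{-2/3}$. Now apply the $\tau$-dependent bound \eqref{eq:t1}, not \eqref{eq:t2}: the factor $(\tau')^{-3/2}\ge 2$ exactly compensates the loss from $u\mapsto u/2$, giving $\frac43(u/2)^3(\tau')^{-3/2}\ge\frac13 u^3$. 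For the $v$-extreme case with \emph{unrestricted} $\tau$, the paper avoids your edge-case worries near $\tau'=1$ by citing the supremum estimate Theorem~\ref{ThmUB6Bis} directly (since $A(\tau,v)\subseteq\{\sup_t\Gamma^*_N(t)\ge 2^{-2/3}u(2N)^{2/3}\}$), rather than Lemma~\ref{lemUB3B} at a specific time.
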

\begin{proof}
For $u-v\leq u(1-2^{-2/3})-\e^{2/3}$, we have
\begin{equation}
\Pb(A(\tau,v))\leq \Pb\Big(\sup_t \Gamma^*_N(t)\geq 2^{-2/3} u (2N)^{2/3}\Big)\leq e^{-\frac13 u^3+c u^2},
\end{equation}
where the last inequality follows from Theorem~\ref{ThmUB6Bis}. The same argument gives the desired result for $v\leq (1-2^{-2/3})u$ by considering the transversal fluctuation of the geodesic $\tilde{\Gamma}_{N}^*$.

Next, notice that $A(\tau,v)$ implies that either $\Gamma^*_N(2\tau N+2\e N)\geq \frac12 u (2N)^{2/3}$ or $\tilde{\Gamma}^{*}_{N}(2\tau N+2\e N)\leq \frac12 u (2N)^{2/3}$, since after meeting they follow the same path. For $\tau+\varepsilon \le 2^{-2/3}$, by Lemma~\ref{lemUB3B} (use the first inequality with $\tau\mapsto \tau+\e\leq 2^{-2/3}$) each of these events have probability bounded by
$e^{-\frac13 u^3+c u^2}$ for some constant $c>0$, completing the proof.
\end{proof}

We now proceed towards dealing with the remaining case. Define the segment
\begin{equation}
{\cal S}_v=\{(\tau N+k,\tau N-k)| (v-1)(2N)^{2/3}\leq k\leq (v+1)(2N)^{2/3}\}.
\end{equation}
Let $C_2$ be large enough such that
\begin{equation}\label{eqB27}
\Pb(L_{(0,0),\LL_{2N}}\leq 4N-C_2 u 2^{4/3}N^{1/3})\leq e^{-\frac13 u^3}.
\end{equation}
For a path $\gamma$, recall that $L(\gamma)$ denotes the passage time of that path. Define the event
\begin{multline}
B(\tau,v)=\{\exists\, \gamma_1,\gamma_2\,|\, \gamma_1:(0,0)\to {\cal S}_v,\gamma_2:I(u)\to {\cal S}_v,\gamma_1\cap\gamma_2=\emptyset\,
\textrm{ and }\\\min\{L(\gamma_1)+\tilde L_{{\cal S}_v,\LL_{2N}},L(\gamma_2)+\tilde L_{{\cal S}_v,\LL_{2N}}\}\geq 4N-C_2 u 2^{4/3}N^{1/3}\},
\end{multline}
where in $\tilde L$ we remove the first point. Then we have the following estimate.

\begin{figure}[t!]
 \centering
 \includegraphics[height=4.5cm]{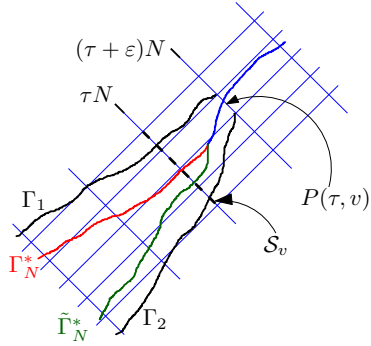}
\caption{Magnification of the \emph{local} geometry of geodesics used in the sandwitching of Lemma~\ref{AtauvBound}. The segment ${\cal S}_v$ is the dashed one. Notice that the lower line is not $\tau=0$.}
\label{FigSandwitching}
\end{figure}

\begin{lem}\label{AtauvBound}
Assume $(1-2^{-2/3})u-\e^{2/3}<v<2^{-2/3}u+\e^{2/3}$ and $2^{-2/3}-\e\leq \tau\leq 1-\varepsilon$ (notice that if the geodesics coalesce then $A(\tau,v)$ must hold for some $\tau\le 1-\e$, the case $\tau=1$ need not be considered).
For $N^{1/11}\gg u>1$, there exists a $\delta>0$ small enough (not depending on $u$ and $N$) such that with $\e=\delta u^{-2}$,
\begin{equation}
\Pb(A(\tau,v))\leq \Pb(B(\tau,v))+4 C e^{-\frac13 u^3}.
\end{equation}
for some constant $C>0$.
\end{lem}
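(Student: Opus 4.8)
The statement compares two events: $A(\tau,v)$, which says the two point-to-line geodesics $\Gamma^*_N$ and $\tilde\Gamma^*_N$ coalesce for the first time within the time-window $(2\tau N, 2\tau N + 2\e N]$, crossing the grid segment centered at $P(\tau,v)$; and $B(\tau,v)$, which is a purely ``static'' event about the existence of two disjoint paths $\gamma_1:(0,0)\to\mathcal S_v$ and $\gamma_2:I(u)\to\mathcal S_v$ whose completions to $\LL_{2N}$ through $\mathcal S_v$ are both near-maximal. The plan is to show that, up to an event of probability $\lesssim e^{-\frac13 u^3}$, the occurrence of $A(\tau,v)$ forces $B(\tau,v)$; then $\Pb(A(\tau,v)) \le \Pb(B(\tau,v)) + (\text{small})$.

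\emph{First step: reduce to near-maximal passage times.} On the event $A(\tau,v)$, the geodesic $\Gamma^*_N$ passes through the segment $\mathcal S_v$ (this is where it coalesces with $\tilde\Gamma^*_N$; the window $(2\tau N, 2\tau N + 2\e N]$ together with the transversal-fluctuation control places the crossing point within $\mathcal S_v$ — here one uses that $\e = \delta u^{-2}$ is small and the geodesic's position at time $2\tau N + 2\e N$ is within $(2N)^{2/3}$ of $v(2N)^{2/3}$). In particular $L_{(0,0),\LL_{2N}} = L(\Gamma^*_N)$ decomposes along $\mathcal S_v$. By the definition \eqref{eqB27} of $C_2$, outside an event of probability $\le e^{-\frac13 u^3}$ we have $L_{(0,0),\LL_{2N}} \ge 4N - C_2 u 2^{4/3} N^{1/3}$, and similarly for the LPP started at $I(u)$ (by translation invariance, another $e^{-\frac13 u^3}$). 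Intersecting with these two good events, $A(\tau,v)$ now gives us two geodesics, the initial portions $\gamma_1 = \Gamma^*_N|_{(0,0)\to\mathcal S_v}$ and $\gamma_2 = \tilde\Gamma^*_N|_{I(u)\to\mathcal S_v}$, each with passage time plus $\tilde L_{\mathcal S_v,\LL_{2N}}$ at least $4N - C_2 u 2^{4/3} N^{1/3}$.

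\emph{Second step: disjointness via ``first intersection''.} The key point of $A(\tau,v)$ is that the \emph{first} coalescence happens in the stated window, hence strictly after the two geodesics have crossed $\mathcal S_v$ is wrong — rather, before time $2\tau N$ the two geodesics have not met, so $\gamma_1$ (run from $(0,0)$ up to its hitting point of $\mathcal S_v$) and $\gamma_2$ (run from $I(u)$ up to its hitting point of $\mathcal S_v$) are disjoint as subsets of $\Z^2$, possibly sharing only the endpoint on $\mathcal S_v$. One must be slightly careful: the definition of $B(\tau,v)$ demands $\gamma_1\cap\gamma_2 = \emptyset$ including endpoints. This is handled exactly as in the footnote device used elsewhere in the paper — one truncates $\gamma_1$ one step before $\mathcal S_v$, or equivalently works with $\mathcal S_v$ being two adjacent anti-diagonals (the segment $\mathcal S_v$ is deliberately defined with width $2(2N)^{2/3}$ and spanning $k \in [(v-1)(2N)^{2/3},(v+1)(2N)^{2/3}]$, which accommodates routing the two paths to distinct lattice points on it). Removing the shared vertex $\omega_{\mathcal S_v}$ (an $\mathrm{Exp}(1)$ variable) changes the passage-time lower bound by a negligible amount that can be absorbed into adjusting $C_2$ and costs another $e^{-\frac13 u^3}$ at most (to control that one removed weight is not too large). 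Thus on the intersection of $A(\tau,v)$ with the (at most) four good events of total probability $\ge 1 - 4Ce^{-\frac13 u^3}$, the event $B(\tau,v)$ holds.

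\emph{Third step: conclude.} Combining, $\Pb(A(\tau,v)) \le \Pb(B(\tau,v)) + 4Ce^{-\frac13 u^3}$, which is the claim. The conditions $N^{1/11}\gg u$ and $\e = \delta u^{-2}$ with $\delta$ small enter to ensure: (i) the grid cell width $(2\e N)^{2/3}$ is consistent with the segment $\mathcal S_v$ of half-width $(2N)^{2/3}$ so that a geodesic crossing the cell indeed hits $\mathcal S_v$; (ii) all the one-point lower-tail estimates from Appendix~\ref{a:LPP} used to define $C_0, C_2$ are in their range of validity; and (iii) removing the single weight $\omega_{\mathcal S_v}$ does not disturb the tail bounds (per the footnote to Corollary~\ref{CorUB2}). \emph{The main obstacle} I anticipate is the bookkeeping around disjointness at the segment $\mathcal S_v$ — making precise that ``first intersection occurs after crossing $\mathcal S_v$'' yields genuinely vertex-disjoint $\gamma_1,\gamma_2$ compatible with the strict $\gamma_1\cap\gamma_2=\emptyset$ in the definition of $B(\tau,v)$, and verifying that the passage-time budget survives the truncation. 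Everything else is a routine union bound over the four tail events.
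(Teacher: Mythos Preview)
Your high-level strategy matches the paper's: intersect $A(\tau,v)$ with a few ``good'' events of complementary probability $\lesssim e^{-\frac13 u^3}$ and show that on this intersection $B(\tau,v)$ holds. You correctly identify the two LPP lower-tail events (the paper's $B_3,B_4$). However, there is a genuine gap in your first step, and your second step introduces an unnecessary complication.

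\textbf{The gap.} You assert that on $A(\tau,v)$ the geodesic $\Gamma^*_N$ crosses $\mathcal S_v$, justifying this by ``the grid cell width $(2\e N)^{2/3}$ is consistent with the segment $\mathcal S_v$ of half-width $(2N)^{2/3}$''. This is not automatic. The event $A(\tau,v)$ pins $\Gamma^*_N$ at time $2\tau N+2\e N$ to a window of width $(2\e N)^{2/3}$ around $v(2N)^{2/3}$, but says nothing directly about time $2\tau N$, which is where $\mathcal S_v$ lives. A deterministic bound is useless: an up-right path can move transversally by as much as $\e N$ over $2\e N$ time steps, and $\e N/(2N)^{2/3}\to\infty$ in the regime considered. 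The paper handles this by a \emph{sandwiching} argument: it introduces the point-to-point geodesics $\Gamma_1$ from $(0,0)$ to $P(\tau,v-\tfrac12\e^{2/3})$ and $\Gamma_2$ from $I(u)$ to $P(\tau,v+\tfrac12\e^{2/3})$, so that by planarity $\Gamma^*_N(2\tau N)\ge \Gamma_1(2\tau N)$ and $\tilde\Gamma^*_N(2\tau N)\le \Gamma_2(2\tau N)$, and then defines the bad events $B_1=\{\Gamma_1(2\tau N)\le (v-1)(2N)^{2/3}\}$ and $B_2=\{\Gamma_2(2\tau N)\ge (v+1)(2N)^{2/3}\}$. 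Bounding $\Pb(B_1),\Pb(B_2)\le Ce^{-\frac13 u^3}$ is a separate result (Lemma~\ref{lemBoundGamma1}) with its own non-trivial proof; this is where the choice $\e=\delta u^{-2}$ with $\delta$ small is actually used. These two events, together with $B_3,B_4$, are the four error terms in the ``$4C$''. Your vague appeal to ``transversal-fluctuation control'' does not substitute for this, and your count of the error events is correspondingly off.

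\textbf{The red herring.} Your worry about $\gamma_1$ and $\gamma_2$ sharing an endpoint on $\mathcal S_v$ is unfounded. By definition of $A(\tau,v)$ the first intersection of $\Gamma^*_N$ and $\tilde\Gamma^*_N$ occurs at some time $t>2\tau N$, whereas $\mathcal S_v\subset\LL_{2\tau N}$. Taking $\gamma_1,\gamma_2$ to be the portions of $\Gamma^*_N,\tilde\Gamma^*_N$ up to and including time $2\tau N$, these are genuinely vertex-disjoint, endpoints included. No truncation or ``one removed weight'' event is needed.
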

\begin{proof}We prove it for $u>4(1+C_2)$ with $C_2$ as in \eqref{eqB27}. Then by adjusting the constant $C$ it is true also for $u>1$. Denote by $\Gamma_1$ and $\Gamma_2$ the geodesics from $(0,0)$ to $P(\tau,v-\frac12\e^{2/3})$ and from $I(u)$ to $P(\tau,v+\frac12\e^{2/3})$ respectively. These two points are the end point of the grid interval whose midpoint is $P(\tau,v)$; see also Figure~\ref{FigSandwitching}. Define the events
\begin{equation}
\begin{aligned}
B_1&=\{\Gamma_1(2\tau N)\leq (v-1)(2N)^{2/3}\},\\
B_2&=\{\Gamma_2(2\tau N)\geq (v+1)(2N)^{2/3}\},\\
B_3&=\{L_{(0,0),\LL_{2N}}\leq 4N-C_2 u 2^{4/3}N^{1/3}\},\\
B_4&=\{L_{I(u),\LL_{2N}}\leq 4N-C_2 u 2^{4/3}N^{1/3}\}.
\end{aligned}
\end{equation}
Let us show that
\begin{equation}\label{eqB32}
A(\tau,v)\subseteq B(\tau,v)\cup B_1\cup B_2 \cup B_3 \cup B_4.
\end{equation}
Observe that
\begin{equation}
A(\tau,v) \subseteq B_1\cup B_3\cup B_3 \cup B_4 \cup (A(\tau,v)\cap B_1^c\cap B_2^c\cap B_3^c\cap B_4^c)
\end{equation}
and the last event is included in $B(\tau,v)$. Indeed, on $A(\tau,v)\cap B_1^c\cap B_2^c$, the geodesics $\Gamma^*_N$ and $\tilde{\Gamma}^{*}_N$ must cross ${\cal S}_v$. Now, let $\gamma_1$ and $\gamma_2$ be the portions of $\Gamma^*_N$ and $\tilde{\Gamma}^{*}_N$ respectively before time $\tau N$. On $A(\tau,v)$, by definition, $\gamma_1$ and $\gamma_2$ must be disjoint. On $B_3^c\cap A(\tau,v)$ it holds that $L(\gamma_1)+L_{{\cal S}_v,\LL_{2N}}\geq L_{(0,0),\LL_{2N}}\geq 4N-C_2 u 2^{4/3}N^{1/3}$, and similar inequality holds on $B_4^c \cap A(\tau,v)$ replacing $\gamma_1$ by $\gamma_2$. Thus the event $B(\tau,v)$ is satisfied.

To complete the proof, we apply union bound to \eqref{eqB32} and bound the probabilities $\Pb(B_i)$. By the choice of $C_2$, $\Pb(B_3)$ and $\Pb(B_4)$ are both bounded by $e^{-\frac13 u^3}$. Lemma~\ref{lemBoundGamma1} below shows that $\Pb(B_1)\leq C e^{-\frac13 u^3}$ for some $C>0$ and by symmetry $\Pb(B_2)\leq C e^{-\frac13 u^3}$ as well. This completes the proof.
\end{proof}

\begin{lem}\label{lemBoundGamma1}
Assume $(1-2^{-2/3})u-\e^{2/3}<v<2^{-2/3}u+\e^{2/3}$ and $2^{-2/3}-\e\leq \tau\leq 1$.
For $N^{1/14}\gg u >1$, there exists a $\delta>0$ small enough (not depending on $u$ and $N$) such that with $\e=\delta u^{-2}$,
\begin{equation}
\Pb(\Gamma_1(2\tau N)\leq (v-1) (2N)^{2/3})\leq C e^{-\frac13 u^3}
\end{equation}
for some $C>0$.
\end{lem}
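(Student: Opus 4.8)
The event in question says that the geodesic $\Gamma_1$ from $(0,0)$ to $P(\tau,v-\tfrac12\e^{2/3})$ is, at time $2\tau N$, to the left of the segment endpoint $(v-1)(2N)^{2/3}$, i.e.\ it has made a macroscopic (on-scale) excursion to the left and then come back to reach its endpoint. The natural way to control this is the standard ``passage time across a shifted window'' argument: if $\Gamma_1$ passes through a point at time $2\tau N$ with spatial coordinate $\le (v-1)(2N)^{2/3}$, then the total passage time $L_{(0,0),P(\tau,v-\frac12\e^{2/3})}$ is bounded above by $\sup_{w\le v-1} \big(L_{(0,0),K(w)} + \tilde L_{K(w),P(\tau,v-\frac12\e^{2/3})}\big)$, where $K(w)=(\tau N+w(2N)^{2/3},\tau N-w(2N)^{2/3})$ and $\tilde L$ omits the weight at $K(w)$ so the two pieces are independent. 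So I would write
\begin{equation*}
\Pb\big(\Gamma_1(2\tau N)\le (v-1)(2N)^{2/3}\big)\le \Pb\big(L_{(0,0),P(\tau,v-\frac12\e^{2/3})}\le S\big)+\Pb\Big(\sup_{w\le v-1}\big(L_{(0,0),K(w)}+\tilde L_{K(w),P}\big)> S\Big)
\end{equation*}
for a threshold $S$ to be chosen, where $P=P(\tau,v-\frac12\e^{2/3})$.

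For the first term I would choose $S = 4\tau N - C u 2^{4/3} N^{1/3}$ for a suitable constant $C$, so that the lower-tail bound for the point-to-point passage time $L_{(0,0),P}$ (Corollary~\ref{CorUB2}/Lemma~\ref{lemUB1} type estimate, noting the endpoint $P$ has characteristic direction perturbed only by an $\Or(u(2N)^{2/3})$ spatial shift, which shifts the mean by $\Or(u^2 N^{1/3})$ — absorbed into $C$ since $\tau\ge 2^{-2/3}-\e$ is bounded away from $0$) gives a bound $\le C e^{-\frac13 u^3}$. The second term is handled exactly as in the proof of Lemma~\ref{lemUB3B}: discretize the range $w\in[\,\cdot\,,v-1]$ into finitely many intervals of width $\delta'$, use $X_k=\sup_{w\ge w_k}L_{(0,0),K(w)}$ and $Y_k=\sup_{|w-w_k|\le \delta'/2}\tilde L_{K(w),P}$ which are independent, bound $X_k\le X_0$, and optimize the product of the one-point upper-tail estimates (Lemma~\ref{lemUB2} rescaled for $X_0$, Proposition~\ref{PropUB5} rescaled for $Y$). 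Because $K(w)$ sits at time $\tau N\asymp N$ and $P$ at time $(\tau N+\e N)$, i.e.\ only $\e N = \delta u^{-2}N$ later, the ``second leg'' is short: its characteristic scale is $(\e)^{1/3}(2N)^{1/3}$ and it must absorb a spatial displacement of order $(v-1-w)(2N)^{2/3}\gtrsim (2^{-2/3}u)(2N)^{2/3}$ across a window of time-length $\e N$, forcing $s_2\gtrsim u^{3}/\delta^{?}$-type cost; choosing $\delta$ small makes the combined exponent $\frac43(\cdot)^{3/2}$ at least $\frac13 u^3$ with room to spare, the leftover going into the $e^{c u^2}$ (in fact here we can afford to lose nothing and land at $C e^{-\frac13 u^3}$ with the stated constants). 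The bound $u\ll N^{1/14}$ guarantees all the ``$s\ll N^{2/9}$'' and ``$u\ll N^{1/9}$'' side-conditions of the rescaled one-point estimates hold.

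The main obstacle, and the point needing the most care, is the geometry of the \emph{second leg}: the segment ${\cal S}_v$ (equivalently the endpoint $P$) is at an on-scale \emph{transversal} distance of order $u(2N)^{2/3}$ from the characteristic line through the starting point $(0,0)$, but only an $\e N = \delta u^{-2} N$ \emph{time} distance from $K(w)$. So the relevant passage times are for very anisotropic (far from characteristic) rectangles, and one must verify that the one-point moderate-deviation inputs still apply with the claimed leading constants in this regime — concretely, that after rescaling by the correct leg-length $\e N$ the deviation parameter $s_2$ is still $\ll (\e N)^{2/9}$, which is where $u\ll N^{1/14}$ and $\e=\delta u^{-2}$ combine. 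A secondary technical point is that $P$ is not exactly on the diagonal $\LL$, so the point-to-point lower-tail estimate must be applied to an off-diagonal endpoint; this only changes the mean by a smooth curved-limit-shape correction of order $u^2 N^{1/3}$ which is of the same order as the threshold shift and so is harmless, but it should be stated explicitly. Once these are in place, a union bound over the $\Or(\delta' {}^{-1})=\Or(\mathrm{poly}(u))$ discretization points, absorbed into the constant $C$, finishes the proof.
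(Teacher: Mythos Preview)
Your overall structure is right and matches the paper's: split into the lower tail of $L_{(0,0),P}$ and the event that some path through the far region $\{K(w):w\le v-1\}$ still achieves passage time $>S$. There is, however, a concrete gap in your choice of threshold.

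The endpoint $P=P(\tau,v-\tfrac12\e^{2/3})$ lies on $\LL_{2(\tau+\e)N}$, not on $\LL_{2\tau N}$, so the law-of-large-numbers value of $L_{(0,0),P}$ is $4(\tau+\e)N-\tfrac{v^2}{\tau+\e}2^{4/3}N^{1/3}+o(N^{1/3})$. Your $S=4\tau N-Cu\,2^{4/3}N^{1/3}$ omits the $4\e N$ piece. With $\e=\delta u^{-2}$ and $u\ll N^{1/14}$ one has $\e N/(2^{4/3}N^{1/3})\asymp u^{-2}N^{2/3}$, which dominates every other term in the problem (in particular it dominates $1/\e=u^2/\delta$ and $v^2\le u^2$). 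As a result, even the \emph{mean} of the best constrained sum, namely $\E\big[L_{(0,0),K(v-1)}+\tilde L_{K(v-1),P}\big]\approx 4(\tau+\e)N-\big(\tfrac{(v-1)^2}{\tau}+\tfrac{1}{\e}\big)2^{4/3}N^{1/3}$, exceeds your $S$: the difference is $\approx 4\e N-\tfrac{1}{\e}2^{4/3}N^{1/3}>0$. Hence your second probability is close to $1$, not $\le Ce^{-\frac13 u^3}$. (There is a second, smaller slip: the curvature correction $v^2/(\tau+\e)$ is of order $u^2$, not $u$, so it cannot be ``absorbed into $C$'' in a term $Cu$.)

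The fix is simply to center $S$ at the correct mean, $S=4(\tau+\e)N-\tfrac{v^2}{\tau+\e}2^{4/3}N^{1/3}-x_12^{4/3}N^{1/3}$ with $x_1\asymp u$; then the $4\e N$ terms cancel in the comparison and the remaining gap $\tfrac{1}{\e}-\Or(u^2)\sim u^2/\delta$ is positive for $\delta$ small. This is exactly what the paper does. It also takes a shorter route for the second piece than you plan: rather than discretize in $w$ and optimize a product as in Lemma~\ref{lemUB3B}, the paper defines three events
\[
E_1=\{L_{(0,0),P}\le S\},\quad
E_2=\{L_{(0,0),\LL_{2\tau N}}\ge 4\tau N+x_2 2^{4/3}N^{1/3}\},\quad
E_3=\Big\{\sup_{w\le v-1}L_{K(w),P}\ge 4\e N-\tfrac{(1-\frac12\e^{2/3})^2}{\e}2^{4/3}N^{1/3}+x_3 2^{4/3}N^{1/3}\Big\},
\]
bounds each separately by Lemma~\ref{lemUB1}, Lemma~\ref{lemUB2B}, and (a rescaled) Lemma~\ref{lemUB2} respectively, with $x_1\asymp u$, $x_2=u^2$, $x_3=u^2\e^{1/3}$, and checks the single arithmetic constraint $-\tfrac{v^2}{\tau+\e}-x_1\ge x_2-\tfrac{1}{\e}(1-\tfrac12\e^{2/3})^2+x_3$, which holds for $\e=\delta u^{-2}$ with $\delta$ small. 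No discretization in $w$ and no product-optimization are needed: $E_2$ uses the full point-to-line bound (independent of $w$) and $E_3$ swallows the supremum over $w$ directly via the half-line estimate. Your discretize-and-optimize plan would also go through once $S$ is corrected, but it is more work than necessary here.
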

\begin{proof}
$\Gamma_1$ is the geodesic from $(0,0)$ to $Q_1=P(\tau,v-\frac12\e^{2/3})$. We want to bound the probability that $\Gamma_1(2\tau N)\leq (v-1)(2N)^{2/3}$.

For $x_1,x_2,x_3>0$, define the events
\begin{equation}
\begin{aligned}
E_1&=\{L_{(0,0),Q_1}\leq 4(\tau+\e)N-\tfrac{v^2}{\tau+\e} 2^{4/3}N^{1/3}-x_1 2^{4/3}N^{1/3}\},\\
E_2&=\{L_{(0,0),\LL_{2\tau N}}\geq 4\tau N+x_2 2^{4/3}N^{1/3}\},\\
E_3&=\Big\{\sup_{w\leq (v-1)(2N)^{2/3}}L_{(\tau N+w,\tau N-w),Q_1}\geq 4\e N\\ &\hspace{11em}-\tfrac{1}{\e}(1-\tfrac12 \e^{2/3})^22^{4/3}N^{1/3}+x_3 2^{4/3} N^{1/3}\Big\}.
\end{aligned}
\end{equation}
By a first order approximation, we have $L_{(0,0),Q_1}\simeq 4(\tau+\e)N-\frac{v^2}{\tau+\e}2^{4/3}N^{1/3}+\Or(v\e^{2/3}N^{1/3})$. So, by Lemma~\ref{lemUB1}, we have $\Pb(E_1)\leq C e^{-c x_1^3}$ for $x_1$ of at least the order of $u$, and such that $x_1\ll N^{2/3}$.
Next, by Lemma~\ref{lemUB2B}, we have $\Pb(E_2)\leq C e^{-\frac43 x_2^{3/2}\tau^{-1/2}}$ for $x_2\ll N^{2/9}$.
Finally, using Lemma~\ref{lemUB2} (with the variables $(N,u)$ in Lemma~\ref{lemUB2} replaced by $(\e N,\e^{-2/3}(1-\tfrac12 \e^{2/3}))$, we get $\Pb(E_3)\leq C e^{-\frac43 x_3^{3/2}\e^{-1/2}}$
provided
\begin{equation}\label{coe}
	x_3 \e^{-1/3}\ll(\e N)^{2/9} \text{\,\, and \,\, }  N^{-1/7}\ll\e.
\end{equation}
Under the condition
\begin{equation}\label{co}
	-\frac{v^2}{\tau+\e}-x_1\geq x_2-\frac{1}{\e}(1-\tfrac12 \e^{2/3})^2+x_3,
\end{equation}
 we have
\begin{equation}\label{eqA.54}
\Pb(\Gamma_1(2\tau N)\leq (v-1) (2N)^{2/3}) \leq \Pb(E_1)+\Pb(E_2)+\Pb(E_3).
\end{equation}
We assume already that $\e$ is small enough so that $\tau\geq 1/2$. First take $x_1=u/(3 c)$ so that $\Pb(E_1)\leq C e^{-\frac13 u^3}$. Then take $x_2=u^2$ which ensures $\Pb(E_2)\leq C e^{-\frac13 u^3}$ as well. Finally we take $x_3=u^2 \e^{1/3}$ that gives $\Pb(E_3)\leq C e^{-\frac13 u^3}$.  To satisfy the condition  \eqref{co}, it is enough to take $\e=\delta u^{-2}$ with $\delta$ small enough (independent of $u$). Finally, note that  \eqref{coe} implies that $u\ll N^{\tfrac{1}{14}}$.
\end{proof}

To complete the proof of Theorem~\ref{ThmCrossings} we need to obtain a bound on the event $B(\tau,v)$.
\begin{lem}\label{LemEventB}
Assume $2^{-2/3}u-\e^{2/3}<v<(1-2^{-2/3})u+\e^{2/3}$ and $2^{-2/3}-\e\leq \tau\leq 1-\varepsilon$.
For $N^{1/9}\gg u>4(1+C_2)$, there exists a constant $\delta>0$ small enough such that with $\e=\delta u^{-2}$,
\begin{equation}
\Pb(B(\tau,v))\leq e^{-\frac13 u^3+c u^2}
\end{equation}
for some constant $c>0$ independent of $\tau,v$. The constant $C_2$ is as in \eqref{eqB27}.
\end{lem}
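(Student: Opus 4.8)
The plan is to use the van den Berg--Kesten (BK) inequality together with the sharp one-point moderate-deviation bounds of Appendix~\ref{a:LPP}, exploiting the disjointness of $\gamma_1,\gamma_2$ built into the definition of $B(\tau,v)$. The key first observation is that $\tilde L_{{\cal S}_v,\LL_{2N}}$ uses only weights strictly above the anti-diagonal line $\LL_{2\tau N}\supset{\cal S}_v$, hence is independent of every weight a path from $(0,0)$ or from $I(u)$ to ${\cal S}_v$ can use. Writing $\tilde L_{{\cal S}_v,\LL_{2N}}=4(1-\tau)N+w(1-\tau)^{1/3}2^{4/3}N^{1/3}$ and conditioning on $w$, the event $B(\tau,v)$ becomes the \emph{disjoint occurrence} of the increasing events $\{L_{(0,0),{\cal S}_v}\ge t(w)\}$ and $\{L_{I(u),{\cal S}_v}\ge t(w)\}$ in the weights below $\LL_{2\tau N}$, where $t(w)=4\tau N-(C_2u+w(1-\tau)^{1/3})2^{4/3}N^{1/3}$. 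By the BK inequality,
\[
\Pb(B(\tau,v))\ \le\ \E_w\!\left[\Pb\!\left(L_{(0,0),{\cal S}_v}\ge t(w)\right)\Pb\!\left(L_{I(u),{\cal S}_v}\ge t(w)\right)\right].
\]

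I would then bound each factor with the one-point upper tails. Since ${\cal S}_v$ is an anti-diagonal segment of on-scale length $\Or(1)$ at time $2\tau N$ and transversal position $v(2N)^{2/3}$, bounding $L_{(0,0),{\cal S}_v}$ above by a point-to-half-line passage time and applying a rescaled Lemma~\ref{lemUB2} gives $\Pb(L_{(0,0),{\cal S}_v}\ge t(w))\le Ce^{-\frac43 s_1(w)^{3/2}}$ with $s_1(w)=\tau^{-1/3}(\tfrac{v^2}{\tau}-C_2u-w(1-\tau)^{1/3})_+$, and likewise $\Pb(L_{I(u),{\cal S}_v}\ge t(w))\le Ce^{-\frac43 s_2(w)^{3/2}}$ with $s_2(w)=\tau^{-1/3}(\tfrac{(u-v)^2}{\tau}-C_2u-w(1-\tau)^{1/3})_+$. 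The fluctuation $w$ of a point-to-line last passage time has right tail $\le C(1+w)e^{-\frac43 w^{3/2}}$ (rescaled Proposition~\ref{PropUB5}) and integrable left part, and $s_1,s_2$ are non-increasing in $w$; splitting the $w$-integral at a fixed constant and integrating yields
\[
\Pb(B(\tau,v))\ \le\ P(u)\,\exp\!\Bigl(-\tfrac43\inf_{w\ge0}\bigl[w^{3/2}+s_1(w)^{3/2}+s_2(w)^{3/2}\bigr]\Bigr)
\]
for some polynomial $P$. The $C_2u$ terms inside $s_1,s_2$, the $\e^{2/3}$ corrections from the width of ${\cal S}_v$, the prefactor $P(u)$, and later the $\Or(\e^{-5/3})$ union bound over the grid all perturb the exponent only at order $u^2$ — here the hypotheses on $u$ and on $\e=\delta u^{-2}$ guarantee these corrections are genuinely lower order — so all of them are absorbed into $e^{cu^2}$.

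It remains to prove the deterministic bound
\[
\inf_{w\ge0}\bigl(w^{3/2}+s_1(w)^{3/2}+s_2(w)^{3/2}\bigr)\ \ge\ \tfrac14u^3-\Or(u^2)
\]
uniformly over $2^{-2/3}-\e\le\tau\le1-\e$ and over $v$ in the hypothesized range (which is contained in a window around $u/2$); this is the analytic heart of the argument and the step I expect to be the main obstacle. Dropping the $\Or(u)$ corrections inside $s_1,s_2$, set $E(w)=w^{3/2}+\tau^{-1/2}(\tfrac{v^2}{\tau}-w(1-\tau)^{1/3})^{3/2}+\tau^{-1/2}(\tfrac{(u-v)^2}{\tau}-w(1-\tau)^{1/3})^{3/2}$. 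For fixed $\tau,v$ this is convex in $w$, so its minimum sits at the unique root of $E'(w)=0$, which can be solved explicitly; specializing to $v=u/2$ — which minimizes $v^3+(u-v)^3$ over the window, and which one verifies is also the in-$v$ minimizer of the resulting critical value — and using the identity $1+4\tau^{-1}(1-\tau)=(4-3\tau)/\tau$, the minimal value equals $\tfrac{u^3}{4\,\tau^{3/2}(4-3\tau)^{1/2}}$. Since $\tau\mapsto\tau^3(4-3\tau)$ is increasing on $[0,1]$ with value $1$ at $\tau=1$, we have $\tau^{3/2}(4-3\tau)^{1/2}\le1$ on $[2^{-2/3},1]$, with equality only in the limit $\tau\to1$; hence the infimum is $\ge\tfrac14u^3-\Or(u^2)$, the worst case being $\tau$ close to $1$. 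This gives $\Pb(B(\tau,v))\le e^{-\frac13u^3+cu^2}$ with $c$ independent of $\tau,v$. Feeding this into Lemma~\ref{AtauvBound}, combining with Lemma~\ref{PropNotCrossingFar} for the extreme $(\tau,v)$, with Theorem~\ref{ThmUB6Bis} for coalescence outside ${\cal R}(u)$, and taking a union bound over the $\Or(\e^{-5/3})=\Or(u^{10/3})$ admissible pairs then proves Theorem~\ref{ThmCrossings}.
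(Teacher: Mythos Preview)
Your proof is correct and follows essentially the same strategy as the paper's: exploit the independence of $\tilde L_{{\cal S}_v,\LL_{2N}}$ from the weights below $\LL_{2\tau N}$, apply BK to the disjoint paths $\gamma_1,\gamma_2$, bound the three tails via the rescaled Lemma~\ref{lemUB2} and Proposition~\ref{PropUB5}, and then solve the identical optimization landing on the exponent $\tfrac{u^3}{4\tau^{3/2}\sqrt{4-3\tau}}\ge\tfrac{u^3}{4}$ with worst case $\tau\to1$. The only cosmetic difference is that the paper discretizes the upper-part fluctuation (their variable $s_3$) and bounds by a maximum over a finite grid of size $\Or(u^2)$, whereas you condition on the continuous value $w$ and integrate---both routes reach the same Lagrange computation, and the paper also carries the segment width via $(v-1)^2,(u-v-1)^2$ explicitly rather than absorbing it into the $\Or(u^2)$ slack as you do.
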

\begin{proof}

For any $s_1,s_2\in\R_+\cup \{-\infty\}$ we define $D(s_1,s_2)$ to be the event that there exist disjoint paths $\gamma_1$ and $\gamma_2$ as in the definition of $B(\tau,v)$, such that
\begin{equation}
\begin{aligned}
L(\gamma_1)&\geq 4\tau N-\frac{(v-1)^2}{\tau}2^{4/3}N^{1/3}+ s_1 2^{4/3}N^{1/3},\\
L(\gamma_2)&\geq 4\tau N-\frac{(u-v-1)^2}{\tau}2^{4/3}N^{1/3}+ s_2 2^{4/3}N^{1/3}.
\end{aligned}
\end{equation}
For $s_3\in\R_+\cup \{-\infty\}$, we define the event
\begin{equation}
C(s_3)=\{\tilde L_{{\cal S}_v,\LL_{2N}}\geq 4(1-\tau)N+s_3 2^{4/3}N^{1/3}\}.
\end{equation}
Recall the constant $C_2$ from \eqref{eqB27}. Like in the proof of Lemma~\ref{lemUB3B} we do a discretization with a fixed width $0<\eta<1$ and thus we will not write all the details. The minor difference is that now we a couple of constraints:
\begin{equation}
s_1+s_3=\tfrac1\tau (v-1)^2-\eta-C_2 u,\quad s_2+s_3 = \tfrac1\tau (u-v-1)^2-\eta-C_2 u.
\end{equation}
In the discretization of Lemma~\ref{lemUB3B}, see \eqref{eqB17}, we separated explicitly two terms, which corresponds taking $S_2=-\infty$ and $S_3=-\infty$. Here we do the same, but instead of writing those terms separately, we consider subsets allowing positive numbers and $-\infty$. More precisely, define the set
\begin{equation}
\begin{aligned}
\Theta=\{&s_1,s_2,s_3 \in\R_+\cup \{-\infty\} |\, s_3\in \eta \Z, s_1\vee 0+s_3\vee 0 = \tfrac1\tau (v-1)^2-\eta-C_2 u\\ &\textrm{and }
s_2\vee 0+s_3\vee 0 = \tfrac1\tau (u-v-1)^2-\eta-C_2 u\}.
\end{aligned}
\end{equation}
Then
\begin{equation}\label{Oin}
B(\tau,v)\subset \bigcup_{s_1,s_2,s_3\in \Theta} C(s_3)\cap D(s_1,s_2).
\end{equation}

The number of elements is, for any $v$ with $\min\{v,u-v\}\leq u (1-2^{2/3})+\e^{2/3}$ of order $u^2/\tau$. Since $\tau\geq 1/2$ (for $\e\leq 2^{-2/3}-1/2$), the sum contains $\Or(u^2)$ many terms. Therefore, using the independence of $C(s_3)$ and $D(s_1,s_2)$,
\begin{equation}
\Pb(B(\tau,v))\leq C u^2 \max_{s_1,s_2,s_3\in \Theta} \Pb(C(s_3))\Pb(D(s_1,s_2)) .
\end{equation}
As $\gamma_1$ and $\gamma_2$ ``occur disjointly'', by the BK (Berg-Kesten) inequality (see e.g.\ Theorem~7 of~\cite{AGH18} for a statement applicable in the above scenario) we get
\begin{equation}\label{eqB40}
\begin{aligned}
\Pb(D(s_1,s_2))&\leq \Pb(L_{(0,0),{\cal S}_v}\geq 4\tau N-\tfrac1{\tau}(v-1)^22^{4/3}N^{1/3}+s_1 2^{4/3} N^{1/3})\\
&\times \Pb(L_{I(u),{\cal S}_v}\geq 4\tau N-\tfrac1{\tau} (u-v-1)^22^{4/3}N^{1/3}+s_2 2^{4/3} N^{1/3}).
\end{aligned}
\end{equation}
Set ${\cal D}_{\tau,v}=\{(\tau N+k,\tau N-k)| k\geq (v-1)(2N)^{2/3}\}$. Then $L_{(0,0),{\cal S}_v}\leq L_{(0,0),{\cal D}_{\tau,v}}$ and Lemma~\ref{lemUB2} (after rescaling $N\to \tau N$) leads to
\begin{equation}
 \Pb(L_{(0,0),{\cal S}_v}\geq 4\tau N-\tfrac1{\tau}(v-1)^22^{4/3}N^{1/3}+s_1 2^{4/3} N^{1/3})\leq C e^{-\frac43 \frac{s_1^{3/2}}{\tau^{1/2}}}
\end{equation}
and similarly for the bound on the second term in \eqref{eqB40}, so that we have
\begin{equation}
\Pb(D(s_1,s_2))\leq C e^{-\frac43 \frac{s_1^{3/2}+s_2^{3/2}}{\tau^{1/2}}},
\end{equation}
for $2<u\ll N^{1/9}$, $0<s_1,s_2\ll N^{2/9}$.

To estimate $\Pb(C(s_3))$ we divide the segment ${\cal S}_v$ into pieces of length $(1-\tau)^{2/3}(2N)^{2/3}$ to which we can apply a rescaled version of Proposition~\ref{PropUB5}. We have $2/(1-\tau)^{2/3}\leq 2/\e^{2/3}$ such pieces (we used $1-\tau\geq \e$). Using union bound we then get, for $s_3 (1-\tau)^{-1/3}\ll N^{2/3}$,
\begin{equation}\label{eq43}
\Pb(C(s_3))\leq \frac{C \max\{1,s_3 (1-\tau)^{-1/3}\}}{\e^{2/3}} e^{-\frac43 \frac{s_3^{3/2}}{(1-\tau)^{1/2}}}.
\end{equation}
Let $\e=\delta u^{-2}$ for $\delta>0$ small enough as in the proof of Lemma~\ref{lemBoundGamma1}. We have
\begin{equation}\label{Oub4}
\Pb(B(\tau,v))\leq C u^4 \delta^{-1}\max_{s_1,s_2,s_3\in\Theta} e^{-\frac43 \frac{s_1^{3/2}+s_2^{3/2}}{\tau^{1/2}}} e^{-\frac43 \frac{s_3^{3/2}}{(1-\tau)^{1/2}}}.
\end{equation}
Therefore we concentrate now on finding the maximum of
\begin{equation}
e^{-\frac43 \frac{s_1^{3/2}+s_2^{3/2}}{\tau^{1/2}}}e^{-\frac43 \frac{s_3^{3/2}}{(1-\tau)^{1/2}}}
\end{equation}
for $s_1,s_2,s_3\in \Theta$. Define $\tilde s_3=s_3+\eta+C_2 u$. Then for given $s_3$ on $\Theta$, we have
\begin{equation}
s_1=\frac{(v-1)^2}{\tau}-\tilde s_3,\quad s_2=\frac{(u-v-1)^2}{\tau}-\tilde s_3.
\end{equation}
So we need to maximize
\begin{equation}
M(v,\tau,s_3)=-\frac43 \frac{((v-1)^2/\tau-\tilde s_3)^{3/2} +((u-v-1)^2/\tau-\tilde s_3)^{3/2}}{\sqrt{\tau}}-\frac43 \frac{s_3^{3/2}}{\sqrt{1-\tau}}.
\end{equation}

In principle, to get the bound on $B(\tau,v)$, we would need to find $s_3$ maximizing $M(v,\tau,s_3)$. In the statement we want a bound uniform in $\tau,v$. This means that we need to maximize the result over $\tau,v$ as well. In short, we maximize $M$ for $s_3,v,\tau$ and thus we do it in another order. First notice that for given $\tau,s_3$, $M(v,\tau,s_3)$ is maximized at $v=u/2$, for which
\begin{equation}
M(u/2,\tau,s_3)=-\frac43 \frac{2((u-2)^2/(4\tau)-\tilde s_3)^{3/2}}{\sqrt{\tau}}-\frac43 \frac{s_3^{3/2}}{\sqrt{1-\tau}}.
\end{equation}
Computing the derivative with respect to $s_3$ we get that, for a given $\tau$, the maximum is at $s_3^*=\frac{[(u-2)^2-4\tau(\eta+C_2 u)](1-\tau)}{\tau(4-3\tau)}>0$ under the assumption $u\geq 4(1+C_2)$ and $\eta<1$. So we get
\begin{equation}\label{eq2.74}
\begin{aligned}
M(u/2,\tau,s_3^*)&=-\frac43 \frac{[(u-2)^2-4\tau (\eta+C_2 u)]^{3/2}}{4\tau^{3/2} \sqrt{4-3\tau}}\\
&\leq -\frac13 [(u-2)^2-4\tau (\eta+C_2 u)]^{3/2}\leq -\frac13 u^3+c u^2
\end{aligned}
\end{equation}
for some constant $c>0$. Inserting \eqref{eq2.74} into \eqref{Oub4} and choosing an appropriate new constant $c$ leads to the claimed result.
\end{proof}

\subsection{Proof of Theorem~\ref{thmUpperBound}}
As before, we shall prove the bound first for sufficiently large $u$, and adjust $c$ later to deduce the same for all $u>1$.

Recall that $L_N^*(u)$ is the rescaled LPP from $I(u)$ to $\LL_{2N}$, see \eqref{eq1.4}.
Let us use the notations $X=L_N^*(0)$ and $Y=L_N^*(u)$. For $j,j'\in \{1,2,\ldots, u-1\}$, let $S^{j}_N$ denote the weight of the maximum weight path $\pi^j$ from $(0,0)$ to $\LL_{2n}$ such that $\pi(t) < j(2N)^{2/3}$ for all $t$, and similarly, let $S^{u,j'}_N$ denote the weight of the maximum weight path $\tilde \pi^{j'}$ from $I(u)$ to $\LL_{2n}$ such that $\tilde \pi^{j'}(t) > (u-j')(2N)^{2/3}$ for all $t$. Let us also set
\begin{equation}
X_j=2^{-4/3}N^{-1/3}(S^{j}_N-4N), \quad Y_{j'}=2^{-4/3}N^{-1/3}(S_N^{u,j'}-4N).
\end{equation}
For notational convenience, we shall also write $X_0=Y_0=0$ and $X_{u}=X$, $Y_{u}=Y$.
Now, writing
\begin{equation}
X=\sum_{j=0}^{u-1} X_{j+1}-X_{j},\quad Y=\sum_{j'=0}^{u-1} Y_{j'+1}-Y_{j'},
\end{equation}
and using the bilinearity of covariance it is enough to prove that for some $c>0$ and for all $j, j'\in \{0,1,\ldots, u-1\}$
\begin{equation}\label{eq:jj}
{\rm Cov}(X_{j+1}-X_{j}, Y_{j'+1}-Y_{j'}) \le e^{cu^2}e^{-\frac{1}{3}u^3}.
\end{equation}

Notice first that $X_{j+1}-X_{j}$ and $Y_{j'+1}-Y_{j'}$ depend on disjoint sets of vertex weights and hence are independent unless $j+j'\ge u-1$. Hence we only need to consider $(j,j')$ such that $j+j'\ge u-1$. For such a pair, noticing $X\ge X_{j+1}\ge X_{j}$ and $Y\ge Y_{j+1}\ge Y_{j}$ it follows that
\begin{equation}
{\rm Cov}(X_{j+1}-X_{j}, Y_{j'+1}-Y_{j'}) \le \E[(X-X_j)(Y-Y_{j'})].
\end{equation}
For convenience of notation, let $\Gamma_1$ and $\Gamma_2$ locally denote the geodesics from $(0,0)$ and $I(u)$ respectively to $\LL_{2N}$. We define
\begin{equation}
A_{j}=\Big\{\sup_{0\leq t\leq 2N} \Gamma_1(t) \ge j(2N)^{2/3}\Big\},\quad B_{j'}=\Big\{\inf_{0\leq t\leq 2N} \Gamma_2(t) \le (u-j')(2N)^{2/3}\Big\}.
\end{equation}
Clearly, $(X-X_j)(Y-Y_{j'})=0$ on the complement of $A_{j}\cap B_{j'}$ and $X-X_{j}$ and $Y-Y_{j'}$ are positive random variable with super-exponential (uniform in $j,j'$) tails (indeed we can just use the upper tail bounds for $X$ and $Y$). Using the notation $\| X \|_p=\E(|X|^p)^{1/p}$ and the fact that the $p$-th norm of the of random variables with super-exponential tails can grow at most linearly in $p$, we know that there exists a constant $C$ such that for all $j,j'$ and all $p\ge 1$
$||X-X_{j}||_{p}, ||Y-Y_{j'}||_{p}\le Cp.$ Using the H\"older inequality we have
\begin{equation}
\E[(X-X_{j})(Y-Y_{j'})]\le ||\Id_{A_j\cap B_{j'}}||_{q}||(X-X_{j})(Y-Y_{j'})||_{p}
\end{equation}
where $p^{-1}+q^{-1}=1$.
By the Cauchy-Schwarz inequality
\begin{equation}
||(X-X_{j})(Y-Y_{j'})||_{p}\le ||X-X_{j}||_{2p}||Y-Y_{j'}||_{2p}\le C p^2
\end{equation} for some new constant $C>0$. It therefore follows that
\begin{equation}
\label{eq:momentbound}
\E[(X-X_{j})(Y-Y_{j'})]\le C'p^2 \Pb(A_{j}\cap B_{j'})^{1/q}.
\end{equation}
for $p,q \ge 1$ with $p^{-1}+q^{-1}=1$.

By Lemma~\ref{lemAjBj} below, we have
\begin{equation}
\label{eq:ajbjbound}
\Pb(A_{j}\cap B_{j'})\le e^{-\frac{1}{3}u^{3}+cu^2}
\end{equation}
for some $c>0$. We choose $p=u$ so that $1/q=1-\frac{1}{u}$. Therefore, plugging \eqref{eq:ajbjbound} in \eqref{eq:momentbound} it follows that
\begin{equation}
\E[(X-X_{j})(Y-Y_{j'})]\le Cu^2 e^{(cu^2-u^3/3)(1-\frac{1}{u})}\le e^{-\frac13 u^3+c' u^2}
\end{equation}
for some new constant $c'$. This establishes \eqref{eq:jj} and Theorem \ref{thmUpperBound} follows by summing over $(j,j')$.
\qed

\begin{lem}\label{lemAjBj}
In the above set-up, for $u$ large enough and $j+j'\ge u-1$ we have
\begin{equation}
\Pb(A_{j}\cap B_{j'})\le e^{-\frac{1}{3}u^{3}+cu^2}
\end{equation}
for some $c>0$.
\end{lem}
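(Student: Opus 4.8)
\textbf{Proof plan for Lemma~\ref{lemAjBj}.} The plan is to decompose the event $A_j \cap B_{j'}$ according to whether the geodesics $\Gamma_1$ (from $(0,0)$) and $\Gamma_2$ (from $I(u)$) coalesce or not, and to bound each case separately. On the event where $\Gamma_1 \cap \Gamma_2 \neq \emptyset$, we directly invoke Theorem~\ref{ThmCrossings}, which gives $\Pb(\Gamma^*_N \cap \tilde\Gamma^*_N \neq \emptyset) \leq e^{-\frac13 u^3 + c u^2}$ — exactly the required bound. So the bulk of the work concerns the disjoint case, and the heuristic is that when the geodesics do not meet, $A_j$ and $B_{j'}$ are ``nearly independent'' and we should be able to multiply their individual probabilities, each of which is small by the transversal fluctuation estimates already established.

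More precisely, on $A_j \cap B_{j'} \cap \{\Gamma_1 \cap \Gamma_2 = \emptyset\}$, the geodesic $\Gamma_1$ reaches height $\geq j(2N)^{2/3}$ at some time and $\Gamma_2$ reaches height $\leq (u-j')(2N)^{2/3}$ at some time, and since $j + j' \geq u-1$ these two ``atypical excursions'' happen using disjoint collections of vertex weights (because the paths themselves are disjoint and each event is measurable with respect to the weights on a path achieving the respective constrained passage time). I would make this rigorous by writing $A_j$ as the event that there exists an up-right path from $(0,0)$ to $\LL_{2N}$ attaining the (unconstrained) point-to-line value and passing above height $j(2N)^{2/3}$, and similarly for $B_{j'}$; on the coalescence-free event these witnessing paths are disjoint, so the event $A_j \cap B_{j'}$ (intersected with non-coalescence) is contained in a ``both occur disjointly'' event to which the BK inequality applies. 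This yields $\Pb(A_j \cap B_{j'} \cap \{\text{disjoint}\}) \leq \Pb(\tilde A_j)\Pb(\tilde B_{j'})$ for suitable slightly enlarged events $\tilde A_j, \tilde B_{j'}$, each of which is $\leq \Pb(\sup_t \Gamma^*_N(t) \geq j(2N)^{2/3})$ type and hence bounded by $e^{c j^2} e^{-\frac43 j^3}$ and $e^{c (j')^2} e^{-\frac43 (j')^3}$ respectively via Theorem~\ref{ThmUB6Bis}. Multiplying gives $e^{c(j^2+(j')^2)} e^{-\frac43(j^3+(j')^3)}$.

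The final step is the elementary optimization: subject to $j + j' \geq u-1$ (with $j, j' \leq u-1$), one checks that $j^3 + (j')^3$ is \emph{minimized} at the corner $j = j' = (u-1)/2$ (by convexity the sum of cubes with fixed sum of the arguments is minimized when they are equal, and decreasing the sum only helps), giving $j^3 + (j')^3 \geq 2 \cdot ((u-1)/2)^3 = (u-1)^3/4$, so that $\frac43(j^3 + (j')^3) \geq \frac13(u-1)^3 = \frac13 u^3 - u^2 + O(u)$. The quadratic error term $c(j^2 + (j')^2) \leq 2c u^2$ gets absorbed into the $cu^2$ in the exponent, and the polynomially-many $(j,j')$ pairs contribute only a polynomial prefactor, also absorbed. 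Combining the coalescence case (Theorem~\ref{ThmCrossings}) and the disjoint case (BK plus Theorem~\ref{ThmUB6Bis} plus the convexity bound) yields $\Pb(A_j \cap B_{j'}) \leq e^{-\frac13 u^3 + cu^2}$.

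\textbf{Main obstacle.} The delicate point is setting up the BK inequality cleanly: one must express $A_j$ and $B_{j'}$ as increasing events witnessed by \emph{edge-/vertex-disjoint} paths on the non-coalescence event, being careful that the constrained maximizers $S^j_N, S^{u,j'}_N$ versus the unconstrained geodesics $\Gamma_1, \Gamma_2$ are handled consistently (as in the use of BK in Lemma~\ref{LemEventB}, citing Theorem~7 of~\cite{AGH18}). A secondary subtlety is that Theorem~\ref{ThmUB6Bis} requires its argument to exceed $1$ and be $\ll N^{1/14}$; since here $j$ or $j'$ could be as small as roughly $u/2 \geq 1$ this is fine for large $u$, and the small-argument or small-$u$ cases are swept into the constant as the statement already permits. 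I do not expect either difficulty to be serious — the real content has already been done in Theorems~\ref{ThmUB6Bis} and~\ref{ThmCrossings}.
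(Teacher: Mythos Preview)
Your overall architecture matches the paper's proof: split on $\{\Gamma_1\cap\Gamma_2\neq\emptyset\}$, invoke Theorem~\ref{ThmCrossings} on the coalescence piece, apply BK on the disjoint piece, and finish with the convexity bound $j^3+(j')^3\ge \tfrac14(j+j'-2)^3\ge \tfrac14(u-3)^3$. The paper also begins with the reduction $\max\{j,j'\}\le 2^{-2/3}u$ (so that $\min\{j,j'\}\ge (1-2^{-2/3})u-1$), handling the extreme cases directly via the one-geodesic bound; you skipped this, though your convexity argument in fact covers those cases as well.

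There is, however, a genuine gap in the step where you bound the post-BK factors. To apply BK one must recast $A_j$ as ``there exists a path from $(0,0)$ to $\LL_{2N}$ with weight $\ge 4N-C_2 u\,2^{4/3}N^{1/3}$ that crosses height $(j-1)(2N)^{2/3}$ at some grid time'' (the paper's event $\hat A_j$), since ``being the geodesic'' is not witnessed by the weights on the path alone. But then $\hat A_j$ is an event about \emph{arbitrary} long paths, not about the geodesic $\Gamma^*_N$, and the containment goes the wrong way: the geodesic event is contained in $\hat A_j$, not conversely. So Theorem~\ref{ThmUB6Bis} does \emph{not} bound $\Pb(\hat A_j)$, contrary to what you assert. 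The paper addresses this explicitly: it notes that the rough localization step \eqref{Oub5b} in the proof of Lemma~\ref{lemUB3B} used geodesic ordering and hence fails for generic paths, and then spends a page (the argument around \eqref{eq2.90}--\eqref{eq2.100}) proving a replacement localization bound valid for any path of length $\ge 4N-C_2 u\,2^{4/3}N^{1/3}$, by decomposing into point-to-segment plus segment-to-line pieces with a carefully chosen threshold split. Only after this can one rerun the rest of the Lemma~\ref{lemUB3B} argument and obtain $\Pb(\hat A_j)\le e^{cu^2}e^{-\frac43(j-1)^3}$ (note the $u^2$, not $j^2$, in the error, because the path-length threshold involves $u$). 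You correctly flagged the BK setup as the delicate point, but the specific obstruction --- that the post-BK events are no longer geodesic events --- is the one that actually requires new work.
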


\begin{proof}
Notice first that arguing as in the proof of Lemma~\ref{PropNotCrossingFar}, if $j\ge 2^{-2/3}u$ then $\Pb(A_{j})\le e^{-\frac{1}{3}u^{3}+cu^2}$ and similarly if $j'\ge 2^{-2/3}u$ then $\Pb(B_{j'})\le e^{-\frac{1}{3}u^{3}+cu^2}$. Therefore it suffices to consider only the cases $\max\{j,j'\}\le 2^{-2/3}u$. This, together with $j+j'\ge u-1$ also implies that $\min\{j,j'\}\ge (1-2^{-2/3})u-1>0$ for $u$ sufficiently large.

Observe now that
\begin{equation}
\Pb(A_{j}\cap B_{j'})\le \Pb(\Gamma_{1}\cap \Gamma_2 \neq \emptyset)+\Pb(A_j\cap B_{j'}\cap \{\Gamma_{1}\cap \Gamma_2 =\emptyset\}).
\end{equation}
By Theorem~\ref{ThmCrossings} it follows that the first term is upper bounded by $e^{-\frac{1}{3}u^{3}+cu^2}$ and hence it suffices to show that
\begin{equation}
\label{eq:ajbj2}
\Pb(A_j\cap B_{j'}\cap \{\Gamma_{1}\cap \Gamma_2 =\emptyset\})\le e^{-\frac13 u^3+cu^2}
\end{equation}
for some $c>0$.

Since the two geodesics do not intersect, we would like to use the BK inequality to get an upper bound. However, we can not do it directly, since the property of being a geodesic depends on all the random variables and not on subsets. We therefore would show something similar for any paths which are of typical length. First, we need to approximate the events $A_j$ and $B_{j'}$.

For $\varepsilon>0$ to be chosen later, let $\tilde{A}_{j}$ denote the event that there exists $k\in \{1,2, \ldots ,\frac{1}{\varepsilon}\}$ such that $\Gamma_1(2k\varepsilon N)\ge (j-1)(2N)^{2/3}$. Similarly, let $\tilde{B}_{j'}$ denote the event that there exists $k\in \{1,2, \ldots ,\frac{1}{\varepsilon}\}$ such that $\Gamma_2(2k\varepsilon N)\le (u-j'+1)(2N)^{2/3}$. By choosing $\e=\delta u^{-3/2}$ for $\delta$ sufficiently small but fixed and arguing as in the proof of Theorem~\ref{ThmUB6Bis} it follows that
\begin{equation}
\Pb(A_{j} \setminus \tilde{A}_{j}), \Pb(B_{j'}\setminus \tilde B_{j'}) \le e^{-\frac{1}{3}u^{3}+c u^2}.
\end{equation}
Therefore \eqref{eq:ajbj2} reduces to showing
\begin{equation}
\label{eq:ajbj3}
\Pb(\tilde{A}_j\cap \tilde{B}_{j'}\cap \{\Gamma_{1}\cap \Gamma_2 =\emptyset\})\le e^{-\frac13 u^3+c u^2}
\end{equation}
for some $c>0$.

Let $C_2$ be such that (using Lemma~\ref{lemUB1})
\begin{equation}
\Pb(X\le -C_2 u)=\Pb(Y\le -C_2u) \le e^{-\frac{1}{3}u^{3}}.
\end{equation}
Observe that on $\tilde{A}_j\cap \tilde{B}_{j'}\cap \{\Gamma_{1}\cap \Gamma_2 =\emptyset\} \cap \{X>-C_2u\}\cap \{Y>-C_2u\}$ there exist disjoint paths $\gamma_1$ and $\gamma_2$ from $(0,0)$ and $I(u)$ respectively to $\LL_{2N}$ with $L(\gamma_1), L(\gamma_2)\ge 4N-C_2u 2^{4/3}N^{1/3}$ such that there exist $k_1,k_2\in \{1,2,\ldots, \frac{1}{\varepsilon}\}$ with $\gamma_1(2k_1\varepsilon N)\ge (j-1)(2N)^{2/3}$ and $\gamma_2(2k_2\varepsilon N)\leq (u-j'+1)(2N)^{2/3}$. By using the BK inequality as before we get that
\begin{equation}
\label{eq:ajbj4}
\Pb(\tilde{A}_j\cap \tilde{B}_{j'}\cap \{\Gamma_{1}\cap \Gamma_2 =\emptyset\}) \le \Pb(\hat{A}_{j})\Pb(\hat{B}_{j'})+2e^{-\frac{1}{3}u^{3}}
\end{equation}
where $\hat{A}_{j}$ denotes the event that there exists a path $\gamma_1$ from $(0,0)$ to $\LL_{2N}$ satisfying $L(\gamma_1)>4N-C_2u2^{4/3}N^{1/3}$ and $\gamma_1(2k\varepsilon N)\ge (j-1)(2N)^{2/3}$ for some $k$ and $\hat{B}_{j'}$ denotes the event that there exists a path $\gamma_2$ from $I(u)$ to $\LL_{2N}$ satisfying $L(\gamma_2)>4N-C_2u2^{4/3}N^{1/3}$ and $\gamma_2(2k\varepsilon N)\le (u-j-1)(2N)^{2/3}$ for some $k$. We claim that
\begin{equation}\label{eq2.92}
\Pb(\hat{A}_{j})\le e^{cu^2}e^{-\frac{4}{3}(j-1)^3},\quad \Pb(\hat{B}_{j}) \le e^{cu^2}e^{-\frac{4}{3}(j'-1)^3}.
\end{equation}
Postponing the proof of \eqref{eq2.92} for now, let us first complete the proof of the lemma. By Jensen's inequality together with $j+j'\ge u-1$ it follows that
\begin{equation}
(j-1)^3+ (j'-1)^3 \ge \frac{1}{4}(j+j'-2)^{3}\ge \frac{(u-3)^3}{4}
\end{equation}
and hence
\begin{equation}
\Pb(\hat{A}_{j})\Pb(\hat{B}_{j'})\le e^{-\frac{1}{3}u^3+c u^2}
\end{equation}
for some $c>0$. This, together with \eqref{eq:ajbj4} establishes \eqref{eq:ajbj3}.

To conclude the proof we show \eqref{eq2.92}. The idea is to follow the proof of Lemma~\ref{lemUB3B}. However the first bound \eqref{Oub5b} in that proof applies only to geodesics, while here we have to show it for any paths with a length larger that $4N-C_2 u 2^{4/3}N^{1/3}$. We will prove that for any path $\gamma_1$ satisfying the conditions of $\hat A_j$, for any $\tau\in\{\e,2\e,\ldots,1\}$
\begin{equation}\label{eq2.90}
\Pb(\gamma_1(2 \tau N)\geq M (2N)^{1/3})\leq C u^2 e^{-\frac43 u^3}
\end{equation}
for $M=\sqrt{C_2 u+ u^2}$. Then the rest of the proof of Lemma~\ref{lemUB3B} applies, except that the sum in \eqref{eqB7} goes until $M-u-1$.

Denote $K(v)=(\tau N,\tau N)+v(2N)^{2/3}(1,-1)$ and divide the possible points where $\gamma_1$ crosses the line $\LL_{2\tau N}$ as
\begin{equation}
\{K(v),v\in [M,\tfrac{\tau N}{(2N)^{2/3}}]\}={\cal I}_0\bigcup_{\ell=M}^{N^{1/10}-1} {\cal I}_\ell,
\end{equation}
with
${\cal I}_0=\{K(v),v\in [N^{1/10},\tfrac{\tau N}{(2N)^{2/3}}]\}$ and ${\cal I}_\ell=\{K(v), v\in [\ell,\ell+1)\}$.
Then we have, for any choice of $A_\ell$ and $B_\ell=4N-C_2 u 2^{4/3}N^{1/3}-A_\ell$,
\begin{equation}
\begin{aligned}
\Pb(\gamma_1(2 \tau N)\geq M (2N)^{1/3})&\leq \sum_{\ell} \Pb\left(L_{(0,0),{\cal I}_\ell}+L_{{\cal I}_\ell,\LL_{2N}}\geq 4N-C_2 u 2^{4/3}N^{1/3}\right)\\
&\leq \sum_\ell \Pb\left(L_{(0,0),{\cal I}_\ell}\geq A_\ell\right)+\Pb\left(L_{(0,0),{\cal I}_\ell}\geq B_\ell\right).
\end{aligned}
\end{equation}
By rescaling Lemma~\ref{lemUB2}, for $s_1\ll (\tau N)^{2/9}$ and $u\ll (\tau N)^{1/9}$
\begin{equation}
\begin{aligned}
\Pb\Big(L_{(0,0),{\cal I}_\ell}>4\tau N-\frac{\ell^2}{\tau} 2^{4/3} N^{1/3}+ s_1 \tau^{1/3}2^{4/3}N^{1/3}\Big)&\leq C e^{-\frac43 s_1^{3/2}},\\
\Pb\Big(L_{(0,0),{\cal I}_0}>4\tau N-\frac{N^{1/5}}{\tau} 2^{4/3} N^{1/3}+s_1 \tau^{1/3}2^{4/3}N^{1/3}\Big)&\leq C e^{-\frac43 s_1^{3/2}},
\end{aligned}
\end{equation}
and by rescaling Proposition~\ref{PropUB5} (and union bound on $(1-\tau)^{-2/3}$ subsegments per $(2N)^{2/3}$-length), for $s_2\ll (1-\tau)^{2/3}N^{2/3}$, we get
\begin{equation}
\begin{aligned}
\Pb\Big(L_{{\cal I}_\ell,\LL_{2N}}>4(1-\tau) N+s_2 (1-\tau)^{1/3}2^{4/3}N^{1/3}\Big)&\leq C s_2(1-\tau)^{-2/3} e^{-\frac43 s_2^{3/2}},\\
\Pb\Big(L_{{\cal I}_0,\LL_{2N}}>4(1-\tau) N+ s_2 (1-\tau)^{1/3}2^{4/3}N^{1/3}\Big)&\leq C s_2 N^{1/3}(1-\tau)^{-2/3} e^{-\frac43 s_2^{3/2}}.
\end{aligned}
\end{equation}

We take, with $\alpha_\ell=-\frac{\ell^2 (1-\tau)^{1/3}+C_2 u \tau^{4/3}}{((1-\tau)^{1/3}+\tau^{1/3})\tau}$,
$A_\ell=4\tau N+\alpha_\ell 2^{4/3}n^{1/3}$. Setting $\ell=M+\tilde \ell$, we get
\begin{equation}
s_1=s_2=\frac{\ell^2-C_2 u \tau}{((1-\tau)^{1/3}+\tau^{1/3})\tau}\geq u^2+\tilde \ell^2.
\end{equation}
From this, it follows that
\begin{equation}\label{eq2.100}
\Pb\left(L_{(0,0),{\cal I}_\ell}\geq A_\ell\right)+\Pb\left(L_{(0,0),{\cal I}_\ell}\geq B_\ell\right)\leq 2C \delta^{-2/3} u \ell^2 e^{-\frac43 u^3}e^{-\frac43 (\ell-M)^{3/2}}
\end{equation}
and thus $\sum_{\ell=M}^{N^{1/10}-1}\eqref{eq2.100}\leq C' \delta^{-2/3}u^2 e^{-\frac43 u^3}$, while for $\ell=0$ it is of an order $\Or(e^{-\frac43 N^{3/20}})$ smaller. Applying union bound on the $\e^{-1}=u^{3/2}/\delta$ time intervals and the estimate \eqref{eq2.90}, we get that any path in $\hat A_j$ is localized within a distance $M(2N)^{1/3}$ with probability at least $1-C e^{-u^3}$.
\end{proof}

\section{Lower Bound}\label{SectLowerBound}
In this section we prove the lower bound of Theorem~\ref{ThmMain}.
	\begin{thm}\label{ThmLowerBound}
There exist a constant $c>0$ such that
		\begin{equation}
			\Cov({\cal A}_1(0),{\cal A}_1(u))\geq e^{-c u \ln(u)} e^{-\frac43 u^3}.
		\end{equation}
	\end{thm}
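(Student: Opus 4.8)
The plan is to work in the exponential LPP pre-limit and transfer the bound to the Airy$_1$ process at the end, mirroring the upper bound but in reverse. By \eqref{eq2.7} it suffices to produce a lower bound of the form $e^{-cu\ln u}e^{-\frac13 u^3}$ (up to the factor $4$ in the exponent coming from the scaling) on ${\rm Cov}(L_N^*(0),L_N^*(u))$ for all large $N$. However, the excerpt tells us the actual route is analytic: start from Hoeffding's covariance identity
\begin{equation}
{\rm Cov}({\cal A}_1(0),{\cal A}_1(u)) = \int_{\R}\int_{\R}\left(\Pb({\cal A}_1(0)\le s, {\cal A}_1(u)\le t) - \Pb({\cal A}_1(0)\le s)\Pb({\cal A}_1(u)\le t)\right)\dx s\,\dx t,
\end{equation}
and show that the integrand is pointwise nonnegative. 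This nonnegativity is exactly where the probabilistic input enters: one realizes ${\cal A}_1$ as a limit of point-to-line LPP, where the analogous events $\{L_N^*(0)\le s\}$ and $\{L_N^*(u)\le t\}$ are both decreasing events in the i.i.d.\ weights $\{\omega_{i,j}\}$, so the FKG inequality gives $\Pb(\text{both}) \ge \Pb(\text{first})\Pb(\text{second})$; passing to the limit yields Lemma~\ref{lemFKG}. With nonnegativity in hand, the covariance is bounded below by the integral of the integrand over any convenient compact rectangle $[s_-,s_+]\times[t_-,t_+]$.

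\textbf{Choosing the box and lower-bounding the integrand.} The key is to choose the rectangle so that the integrand is provably of the right order. The natural choice, guided by the one-point tails (the GOE Tracy--Widom lower tail gives $\Pb({\cal A}_1(0)\le -x)\approx e^{-\frac{1}{12}(2x)^3}$ type behaviour, while here the relevant scale is set by the $u^3$ exponent), is to take $s,t$ in a window of width $\Or(1)$ centered near $-c_0 u^2$ for an appropriate constant, so that each marginal factor $\Pb({\cal A}_1(\cdot)\le s)$ is itself of order $e^{-\Theta(u^3)}$ but not smaller than $e^{-\frac{4}{3}u^3 - \Or(u\ln u)}$. One then needs a \emph{lower} bound on $\Pb({\cal A}_1(0)\le s,\ {\cal A}_1(u)\le t) - \Pb({\cal A}_1(0)\le s)\Pb({\cal A}_1(u)\le t)$ on this box; the cleanest way is to bound the joint probability below by $\Pb({\cal A}_1(0)\le s)$ times a conditional probability, or more directly to extract from the Fredholm determinant expression \eqref{Fr} for the joint distribution a two-term estimate of the form $\det(\Id - K_{\rm joint}) \ge \det(\Id - K_{0,s})\det(\Id - K_{u,t})(1 + (\text{positive cross term}))$, where the cross term is governed by the off-diagonal block of the kernel connecting the two spatial points and is exponentially small of order exactly $e^{-\frac43 u^3}$. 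Getting the constant $\frac43$ right (and only losing a polynomial-in-$u$, i.e.\ $e^{\Or(u\ln u)}$, multiplicative factor) in this Fredholm analysis is the crux.

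\textbf{The Fredholm determinant estimate.} Concretely, I expect the argument to expand $\det(\Id-K)$ where $K$ acts on $L^2$ over two copies of a half-line, in a $2\times 2$ block form with diagonal blocks $K_{00}, K_{uu}$ and off-diagonal blocks $K_{0u}, K_{u0}$; factoring out the diagonal part reduces the problem to controlling $\det(\Id - R)$ for a resolvent-conjugated operator $R$ built from the off-diagonal blocks, and the leading contribution to $\log\det(\Id-K) - \log\det(\Id-K_{00}) - \log\det(\Id-K_{uu})$ is $-\Tr(\text{off-diagonal product})$, which one evaluates by steepest descent on the contour integral representations of the Airy$_1$ kernel. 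The relevant saddle computation should produce precisely $e^{-\frac43 u^3}$ as the order of this trace, with subleading corrections absorbed into $e^{\Or(u\ln u)}$; here the $u^2$-window of integration in $s,t$ and careful tracking of the shifts by $-c_0 u^2$ in the kernel arguments feed the $\frac43$ constant. One must also ensure the diagonal factors $\Pb({\cal A}_1(0)\le s)$ are not too small on the box: this requires a matching \emph{lower} bound on the GOE Tracy--Widom-type lower tail on the relevant scale, which on a window near $-c_0 u^2$ contributes $e^{-\frac43 u^3 + \Or(u\ln u)}$ only if $c_0$ is chosen as the minimizer — so the box location is dictated by optimizing the sum of (diagonal tail exponent) and (cross-term exponent), and this optimization is exactly what pins $\frac43$.

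\textbf{Main obstacle.} The principal difficulty is the Fredholm-determinant asymptotics: extracting the sharp leading exponent $\frac43 u^3$ (not merely $c u^3$) from \eqref{Fr} requires precise control of the Airy$_1$ kernel and its off-diagonal coupling over contours that must be deformed to the correct saddle, uniformly as $s,t\to -\infty$ like $-u^2$ and $u\to\infty$, while keeping all error terms at the $e^{\Or(u\ln u)}$ level. Ensuring the FKG-based nonnegativity passes to the limit (which needs the pre-limit events to genuinely be monotone events in the weights, and the convergence of finite-dimensional distributions together with tightness of the relevant quantities) is conceptually the cleanest step but must be stated carefully as Lemma~\ref{lemFKG}. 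Everything else — Hoeffding's identity, Jensen-type bookkeeping of exponents, absorbing polynomial factors — is routine.
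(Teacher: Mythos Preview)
Your overall architecture is right: Hoeffding's identity, FKG to get nonnegativity of the integrand (so one may restrict to a rectangle), the $2\times 2$ block factorization of the Fredholm determinant, and identifying $-\Tr(K_{1,2}K_{2,1})$ as the leading term via steepest descent. That much matches the paper.

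The genuine gap is your choice of the rectangle. You propose $s,t$ near $-c_0 u^2$ so that the marginals $\Pb({\cal A}_1(0)\le s)$ are themselves of order $e^{-\Theta(u^3)}$, and then an ``optimization'' between diagonal tail exponent and cross-term exponent is supposed to pin the constant $\tfrac43$. This is wrong on two counts. First, the scaling is off: since ${\cal A}_1(0)$ has GOE Tracy--Widom marginal (up to a factor $2^{2/3}$), whose lower tail decays like $e^{-\frac{1}{24}|x|^3}$, taking $s\sim -c_0 u^2$ gives $f(s)\sim e^{-\Theta(u^6)}$, not $e^{-\Theta(u^3)}$; the integrand $f(s_1)f(s_2){\cal E}$ is then far too small. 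Second, and more importantly, there is no optimization in the actual mechanism: the paper takes $s_1,s_2$ \emph{positive}, of size $3\ln u$, so that $f(s_i)\in[F_{\rm GOE}(0),1]$ is bounded away from zero. The entire factor $e^{-\frac43 u^3}$ comes from the cross term ${\cal E}\approx -\Tr(K_{1,2}K_{2,1})$ alone, via Proposition~\ref{propTrace}, which gives
\[
-\Tr(K_{1,2}K_{2,1})=\frac{1}{16\pi u^4}e^{-2(s_1+s_2)u-\frac43 u^3}\bigl[s_1 s_2+\Or(u^{-1/4})\bigr]
\]
for $0\le s_1,s_2\le \sqrt{u}$. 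The reason one takes $s_i\sim 3\ln u$ rather than, say, $s_i=1$ is not the marginals but the \emph{error control}: the bound on $R_1$ in Proposition~\ref{PropR1} has prefactor $u^{-2}e^{-\min\{s_1,s_2\}}$, while the trace has prefactor $u^{-4}s_1 s_2$; one needs $e^{-s_i}\lesssim u^{-2}$ to make $R_1$ subleading, hence $s_i\gtrsim 2\ln u$. This is what produces the $e^{-cu\ln u}$ loss (through the $e^{-2(s_1+s_2)u}$ factor with $s_i\sim\ln u$), and it is the point your proposal misses.
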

We begin explaining the strategy of the proof. Hoeffding's covariance identity~\cite{Hoef40}, which comes from integration by parts on $\R_+$ and $\R_-$ separately, states that
\begin{equation}\label{covid}
	\Cov(X,Y)=\int_\R ds_1\int_\R ds_2\left[\Pb(X\leq s_1,Y\leq s_2)-\Pb(X\leq s_1)\Pb(Y\leq s_2)\right].
\end{equation}
We therefore define the following functions
\begin{equation}\label{Fs}
	\begin{aligned}
	F(u;s_1,s_2)&=\Pb\left({\cal A}_1(0)\leq s_1,{\cal A}_1(u)\leq s_2\right),\\
	f(s)&=\Pb\left({\cal A}_1(0)\leq s\right)=\Pb\left({\cal A}_1(u)\leq s\right),
\end{aligned}
\end{equation}
where in the last equality we used the stationarity of ${\cal A}_1$. As we would like to use \eqref{covid} with $X={\cal A}_1(0)$ and $Y={\cal A}_1(u)$, we are interested in finding the asymptotic behavior of
\begin{equation}\label{Ff2}
	F(u;s_1,s_2)-f(s_1)f(s_2) \quad \text{ as } u\rightarrow\infty.
\end{equation}
As $u\to\infty$, the random variables ${\cal A}_1(0)$ and ${\cal A}_1(u)$ become independent of each other; thus we define $\cal E$ through
\begin{equation}\label{Ff}
	F(u;s_1,s_2)=f(s_1)f(s_2)(1+{\cal E}(u;s_1,s_2)),
\end{equation}
where ${\cal E}\rightarrow 0$ when $u\rightarrow \infty$ (at least for $s_1$ and $s_2$ independent of $u$). Using \eqref{Ff} in \eqref{Ff2} and \eqref{covid} we obtain
\begin{equation}\label{int}
	\text{Cov}({\cal A}_1(0),{\cal A}_1(u))=\int_\R ds_1 \int_\R ds_2 f(s_1)f(s_2){\cal E}(u;s_1,s_2)
\end{equation}
Next, by the FKG inequality, see Lemma~\ref{lemFKG} below, the integrand in \eqref{int} is positive for all $u\geq 0$. We can therefore restrict the integration in \eqref{int} to a compact subset of $\R^2$ to obtain the following lower bound
\begin{equation}\label{eq1.7}
	\text{Cov}({\cal A}_1(0),{\cal A}_1(u))\geq \int_\alpha^{\beta} ds_1 \int_\alpha^\beta ds_2 f(s_1)f(s_2) {\cal E}(u;s_1,s_2)
\end{equation}
for any choice of $\alpha<\beta$.

Thus the goal of the computations below is to show that $\cal E$ is of order $e^{-\frac43 u^3}$ times a subleading term for $s_1,s_2$ in some chosen intervals of size $\Or(1)$ where $f(s_1)$ and $f(s_2)$ are bounded away from $0$.

\begin{lem}\label{lemFKG}
For any $s_1,s_2\in\R$,
	\begin{equation}\label{fkga}
		\Pb({\cal A}_1(0)\leq s_1,{\cal A}_1(u)\leq s_2)-\Pb({\cal A}_1(0)\leq s_1)\Pb({\cal A}_1(u)\leq s_2)\geq 0.
	\end{equation}
\end{lem}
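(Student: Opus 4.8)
The plan is to deduce the inequality \eqref{fkga} from the FKG inequality for the underlying exponential LPP model, together with the convergence \eqref{eqUB5} of the rescaled point-to-line LPP to the Airy$_1$ process in the sense of finite-dimensional distributions. The key observation is that the event $\{L_{I(u),\LL_{2N}}\leq s\}$ is a decreasing event in the i.i.d.\ weights $\{\omega_{i,j}\}$, since increasing any weight can only increase a last passage time. Thus, first I would fix $N$ and two reference points, say $I(0)=(0,0)$ and $I(u(2N)^{2/3}(1,-1))$, and consider the two decreasing events
\begin{equation}
E_1=\{L_{(0,0),\LL_{2N}}\leq 2^{4/3}N^{1/3}s_1+4N\},\qquad E_2=\{L_{I(u),\LL_{2N}}\leq 2^{4/3}N^{1/3}s_2+4N\}.
\end{equation}
Since the product measure $\bigotimes_{i,j}\mathrm{Exp}(1)$ is a positively associated (FKG) measure — being a product measure on the partially ordered space $\R^{\Z^2}$, the Harris/FKG inequality applies — we get $\Pb(E_1\cap E_2)\geq \Pb(E_1)\Pb(E_2)$, i.e.
\begin{equation}
\Pb\big(L_N^*(0)\leq s_1,\,L_N^*(u)\leq s_2\big)\geq \Pb\big(L_N^*(0)\leq s_1\big)\,\Pb\big(L_N^*(u)\leq s_2\big).
\end{equation}

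Next I would pass to the limit $N\to\infty$. By \eqref{eqUB5}, the pair $(L_N^*(0),L_N^*(u))$ converges in distribution to $(2^{1/3}{\cal A}_1(0),2^{1/3}{\cal A}_1(2^{-2/3}u))$. The distribution functions $f$ of the Airy$_1$ marginals are continuous (indeed the GOE Tracy--Widom law has a density), so the one-dimensional and two-dimensional distribution functions converge at every point; hence the FKG inequality at level $N$ survives in the limit, giving
\begin{equation}
\Pb\big({\cal A}_1(0)\leq \tilde s_1,\,{\cal A}_1(2^{-2/3}u)\leq \tilde s_2\big)\geq \Pb\big({\cal A}_1(0)\leq \tilde s_1\big)\,\Pb\big({\cal A}_1(2^{-2/3}u)\leq \tilde s_2\big)
\end{equation}
after the substitution $\tilde s_i=2^{-1/3}s_i$. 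Finally, replacing $2^{-2/3}u$ by $u$ (the statement is for an arbitrary spatial increment, and the rescaling only relabels the parameter) and relabelling $\tilde s_i\to s_i$ yields exactly \eqref{fkga}.

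The only genuinely delicate point is the justification that the two-point distribution functions converge at the boundary points $(s_1,s_2)$ rather than merely in the sense of weak convergence of measures; this is handled by continuity of the limiting marginals, which forces the limiting two-dimensional law to have no atoms on lines $\{x=s_1\}$ or $\{y=s_2\}$, so the portmanteau theorem gives convergence of $\Pb(X\leq s_1,Y\leq s_2)$ at every $(s_1,s_2)$. (Alternatively, one can avoid this entirely by noting that both sides of \eqref{fkga} are right-continuous in $(s_1,s_2)$ and that the inequality for the pre-limit, being closed, is stable under the weak convergence of the joint laws applied at continuity points, which are dense.) Everything else is routine: verifying that last passage times are monotone in the weights, and that a product measure is FKG. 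I would also remark that the same argument applies verbatim in the pre-limit and is the source of the positivity of the integrand used in \eqref{int}–\eqref{eq1.7}.
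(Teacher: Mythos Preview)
Your proposal is correct and follows essentially the same approach as the paper: establish the FKG inequality at the prelimiting LPP level by noting that the events $\{L_N^*(0)\le s_1\}$ and $\{L_N^*(u)\le s_2\}$ are decreasing in the i.i.d.\ exponential weights, and then pass to the limit using \eqref{eqUB5}. The paper's proof is slightly terser on the limiting step, while you add a useful remark about continuity of the Airy$_1$ marginals to justify convergence of the distribution functions at every point.
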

\begin{proof}
Recalling the notation from \eqref{eq1.4}, notice that both $L^{*}_{N}(0)$ and $L_{N}^{*}(u)$ are increasing function of the weights $\omega_{i,j}\sim \exp(1)$ (and they depend only on finitely many vertex weights). For any $t_1,t_2$ it therefore follows that the events $\{L^{*}_{N}(0)\le t_1\}$ and $\{L^{*}_{N}(u)\le t_2\}$ are both decreasing and hence by the FKG inequality they are positively correlated (note that the FKG inequality is often stated for measures on finite distributive lattices satisfying the FKG lattice condition, but more general versions for product measures on finite products of totally ordered measure spaces applicable in the above scenario are available; see e.g.\ Lemma~2.1 of~\cite{Kesten2003} or Corollary~2 of~\cite{kemperman1977fkg}), and therefore
\begin{equation}\label{fkg}
\Pb(L^{*}_{N}(0)\le t_1,L^{*}_{N}(u)\le t_2)-\Pb(L^{*}_{N}(0)\le t_1)\Pb(L^{*}_{N}(u)\le t_2)\geq 0.
\end{equation}
Using
that the Airy$_1$ process is a scaling limit of $L^*$ (see~\eqref{eqUB5}), the proof is complete.
\end{proof}

We first derive an expression for ${\cal E}(u;s_1,s_2)$. Let us begin with the Fredholm representation of the function $F$. We have from~\cite{Sas05,BFPS06,Fer07}
	\begin{equation}\label{Fr}
	F(u;s_1,s_2)=\det(\Id-K)
\end{equation}
where $K$ is a $2\times 2$ matrix kernel
\begin{equation}\label{K}
	K=\left(
	\begin{array}{cc}
		K_{1,1} & K_{1,2} \\
		K_{2,1} & K_{2,2} \\
	\end{array}
	\right)
\end{equation}
 with entries given by the extended kernel of the Airy$_1$ process~\cite{Sas05,BFPS06,Fer07}
 \begin{equation}\label{Ker}
	\begin{aligned}
		K_{1,1}(x,y)&=\Id_{[x>s_1]}\Id_{[y>s_1]}\Ai(x+y),\\
		K_{1,2}(x,y)&=\Id_{[x>s_1]}\Id_{[y>s_2]}\Big(\Ai(x+y+u^2) e^{(x+y)u+\tfrac23 u^3}-\frac{e^{-(x-y)^2/4u}}{\sqrt{4\pi u}}\Big),\\
		K_{2,1}(x,y)&=\Id_{[x>s_2]}\Id_{[y>s_1]}\Ai(x+y+u^2) e^{-(x+y)u-\tfrac23 u^3},\\
		K_{2,2}(x,y)&=\Id_{[x>s_2]}\Id_{[y>s_2]}\Ai(x+y),
	\end{aligned}
\end{equation}
where $\Ai$ denotes the Airy function. Also, for the one-point distributions, we have $f(s_i)=\det(\Id-K_{i,i})$ for $i=1,2$.

The first step is the following result.
\begin{lem}\label{LemmaDec}
With the above notations
\begin{equation}\label{FdK}
1+{\cal E}(u;s_1,s_2)=\det(\Id-\widetilde K)
\end{equation}
where
\begin{equation}
	\widetilde K=(\Id-K_{1,1})^{-1}K_{1,2}(\Id-K_{2,2})^{-1}K_{2,1}.
\end{equation}
\end{lem}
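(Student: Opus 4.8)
The plan is to deduce \eqref{FdK} from a purely algebraic block-determinant identity, the only real work being the functional-analytic bookkeeping that makes that identity rigorous for Fredholm determinants. Write $\Id-K$ in the $2\times2$ block form dictated by \eqref{K} and \eqref{Ker} as an operator
\[
\Id-K=\begin{pmatrix}\Id-K_{1,1} & -K_{1,2}\\[2pt] -K_{2,1} & \Id-K_{2,2}\end{pmatrix}
\]
on $L^2(s_1,\infty)\oplus L^2(s_2,\infty)$, where the indicator factors in \eqref{Ker} mean that $K_{1,1}$ acts on $L^2(s_1,\infty)$, $K_{2,2}$ on $L^2(s_2,\infty)$, $K_{1,2}\colon L^2(s_2,\infty)\to L^2(s_1,\infty)$, and $K_{2,1}\colon L^2(s_1,\infty)\to L^2(s_2,\infty)$. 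First I would record that all four blocks are trace class on the indicated half-lines: this follows from the super-exponential decay of the Airy function together with the explicit form of the entries (for the Gaussian summand $e^{-(x-y)^2/4u}/\sqrt{4\pi u}$ in $K_{1,2}$ one uses that, cut off by $\Id_{x>s_1}\Id_{y>s_2}$ and composed with the rapidly decaying resolvent factors below, it is still trace class), and it is exactly the setting in which the representation \eqref{Fr} is derived in \cite{Sas05,BFPS06,Fer07}. In particular $\Id-K$, $\Id-K_{1,1}$, $\Id-K_{2,2}$ are all of the form identity plus trace class, and $\det(\Id-K_{i,i})=f(s_i)$.

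Next I would note that $\Id-K_{1,1}$ and $\Id-K_{2,2}$ are invertible. Indeed, $f(s_i)=\Pb({\cal A}_1(0)\le s_i)\in(0,1]$, so $\det(\Id-K_{i,i})=f(s_i)\neq0$ for the relevant $s_i$; since $\Id-K_{i,i}$ is Fredholm of index zero, a non-vanishing Fredholm determinant forces invertibility. (We only ever apply \eqref{FdK} for $s_1,s_2$ in a fixed compact interval on which $f>0$, so this causes no loss.)

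With invertibility in hand the computation is the standard Schur-complement factorization. Using
\[
\begin{pmatrix}A & B\\ C & D\end{pmatrix}=\begin{pmatrix}A & 0\\ C & \Id\end{pmatrix}\begin{pmatrix}\Id & A^{-1}B\\ 0 & D-CA^{-1}B\end{pmatrix}
\]
with $A=\Id-K_{1,1}$, $B=-K_{1,2}$, $C=-K_{2,1}$, $D=\Id-K_{2,2}$, together with multiplicativity of Fredholm determinants on $\Id+$(trace class) and the fact that block-triangular determinants factor over the diagonal blocks, one gets
\[
\det(\Id-K)=\det(\Id-K_{1,1})\,\det\!\big(\Id-K_{2,2}-K_{2,1}(\Id-K_{1,1})^{-1}K_{1,2}\big).
\]
Pulling $\Id-K_{2,2}$ out of the second determinant and then applying the cyclic identity $\det(\Id-XY)=\det(\Id-YX)$ with $X=(\Id-K_{2,2})^{-1}K_{2,1}$ and $Y=(\Id-K_{1,1})^{-1}K_{1,2}$ (both $XY$ and $YX$ being trace class) yields
\[
\det(\Id-K)=\det(\Id-K_{1,1})\,\det(\Id-K_{2,2})\,\det\!\big(\Id-(\Id-K_{1,1})^{-1}K_{1,2}(\Id-K_{2,2})^{-1}K_{2,1}\big),
\]
that is, $F(u;s_1,s_2)=f(s_1)f(s_2)\det(\Id-\widetilde K)$; comparing with the definition \eqref{Ff} of ${\cal E}$ gives \eqref{FdK}.

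I expect the only genuine obstacle to be the trace-class (and Hilbert--Schmidt) verification that legitimizes the block factorization, the multiplicativity, and the cyclic swap — in particular handling the Gaussian term in $K_{1,2}$ and checking that the products $XY$, $YX$ above are trace class rather than merely bounded. All of this is standard for the Airy$_1$ extended kernel; a reader preferring to avoid operator-theoretic subtleties may instead verify the identity termwise on the defining Fredholm series, but the operator argument above is the cleaner route.
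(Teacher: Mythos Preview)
Your argument is correct and is essentially the same block-determinant reduction the paper uses, with a minor difference in the intermediate route. The paper first factors $\Id-K$ as $(\Id-\mathrm{diag}(K_{1,1},K_{2,2}))$ times a residual, obtaining a $2\times2$ block determinant with zeros on the diagonal and $G=-(\Id-K_{1,1})^{-1}K_{1,2}$, $H=-(\Id-K_{2,2})^{-1}K_{2,1}$ off the diagonal, and then reduces this to $\det(\Id-GH)$ by a single block-row operation. You instead go straight through the Schur complement and then apply $\det(\Id-XY)=\det(\Id-YX)$. Both computations are one line of linear algebra once the operators are in place, and they produce the same $\widetilde K$. Your version is slightly more careful on the functional-analytic side (trace class, invertibility of $\Id-K_{i,i}$ via $f(s_i)>0$), which the paper leaves implicit; that additional bookkeeping is correct and does no harm.
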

\begin{proof}
Similar to~\cite{Wid03}, we compute
\begin{equation}
		\begin{aligned}
			& \det\left(\Id-\left(
			\begin{array}{cc}
				K_{1,1} & K_{1,2} \\
				K_{2,1} & K_{2,2} \\
			\end{array}
			\right)
			\right)
			= \det\left(\Id-\left(
			\begin{array}{cc}
				K_{1,1} & 0 \\
				0 & K_{2,2} \\
			\end{array}
			\right)
			-
			\left(
			\begin{array}{cc}
				0 & K_{1,2} \\
				K_{2,1} & 0 \\
			\end{array}
			\right)
			\right)\\
			&= \det\left(\Id-\left(
			\begin{array}{cc}
				K_{1,1} & 0 \\
				0 & K_{2,2} \\
			\end{array}
			\right)\right) \\&\qquad \times
			\det\left(\Id-\left(
			\begin{array}{cc}
				(\Id-K_{1,1})^{-1} & 0 \\
				0 & (\Id-K_{2,2})^{-1} \\
			\end{array}
			\right)
			\left(
			\begin{array}{cc}
				0 & K_{1,2} \\
				K_{2,1} & 0 \\
			\end{array}
			\right)\right) \\
			&=\det(\Id-K_{1,1})\det(\Id-K_{2,2})
			\det\left(\Id-\left(
			\begin{array}{cc}
				0 &-G \\
				-H & 0 \\
			\end{array}
			\right)\right),
		\end{aligned}
	\end{equation}
	where we set $G=-(\Id-K_{1,1})^{-1}K_{1,2}$ and $H=-(\Id-K_{2,2})^{-1}K_{2,1}$. Moreover,
	\begin{equation}
		\begin{aligned}
			\det\left( \begin{array}{cc}
				\Id &G \\
				H & \Id \\
			\end{array}
			\right)
			=
			\det\left(\left(
			\begin{array}{cc}
				\Id & G \\
				H & \Id \\
			\end{array}
			\right)
			\left(
			\begin{array}{cc}
				\Id & 0 \\
				-H & \Id \\
			\end{array}
			\right)
			\right)=\det(\Id-G\, H),
		\end{aligned}
	\end{equation}
	where
	\begin{equation}
		G\, H = (\Id-K_{1,1})^{-1}K_{1,2}(\Id-K_{2,2})^{-1}K_{2,1}.
	\end{equation}
	Since $\det(\Id-K_{\ell,\ell})=f(s_\ell)$, for $\ell=1,2$ as we mentioned above, \eqref{FdK} follows from \eqref{Ff}.
\end{proof}

Next we would like to approximate the Fredholm determinant in \eqref{FdK} by that of a simpler kernel.
	\begin{prop}\label{PropR1}
Let us define
\begin{equation}
R_1(u;s_1,s_2):=\det(\Id-\tilde{K})-\det(\Id-K_{1,2}K_{2,1}).
\end{equation}
Then, for any $s_1,s_2\geq 0$, there exists a constant $C_2>0$ such that
		\begin{equation}\label{ubd}
			|R_1(u;s_1,s_2)|\leq \frac{C_2 e^{-\min\{s_1,s_2\}}}{u^2}e^{-\tfrac43 u^3-2(s_1+s_2)u}.
		\end{equation}
for any $u\geq \max\{\frac12,\sqrt{s_1+s_2}\}$.
	\end{prop}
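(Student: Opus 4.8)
The goal is to bound the difference $R_1 = \det(\Id-\tilde K) - \det(\Id-K_{1,2}K_{2,1})$, where $\tilde K = (\Id-K_{1,1})^{-1}K_{1,2}(\Id-K_{2,2})^{-1}K_{2,1}$. The plan is to write $(\Id-K_{1,1})^{-1} = \Id + (\Id-K_{1,1})^{-1}K_{1,1}$ and similarly for the other resolvent, expand $\tilde K$ accordingly, and isolate the ``main term'' $K_{1,2}K_{2,1}$ from the correction. Concretely,
\begin{equation}
\tilde K = K_{1,2}K_{2,1} + (\Id-K_{1,1})^{-1}K_{1,1}K_{1,2}K_{2,1} + K_{1,2}(\Id-K_{2,2})^{-1}K_{2,2}K_{2,1} + (\text{cross term}).
\end{equation}
Then I would use the general Lipschitz-type bound for Fredholm determinants, $|\det(\Id-A)-\det(\Id-B)| \le \|A-B\|_1 \exp(1+\|A\|_1+\|B\|_1)$ (or the analogous bound via $\|A-B\|_1\,\|{\rm something}\|$), to reduce \eqref{ubd} to a trace-norm estimate on $\tilde K - K_{1,2}K_{2,1}$, together with uniform control of $\|\tilde K\|_1$ and $\|K_{1,2}K_{2,1}\|_1$.

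**Key estimates needed.** The engine of the proof is the pointwise decay of the kernel entries on $[s_1,\infty)\times[s_2,\infty)$ with $s_1,s_2\ge 0$. The dominant feature is that $K_{2,1}(x,y) = \Ai(x+y+u^2)e^{-(x+y)u - \frac23 u^3}$ carries a factor $e^{-\frac23 u^3}$ and, using $\Ai(x+y+u^2)\lesssim e^{-\frac{2}{3}(x+y+u^2)^{3/2}} \lesssim e^{-\frac23 u^3 - (x+y)u}$ (Taylor-expanding the $3/2$ power at $u^2$), one gets $|K_{2,1}(x,y)| \lesssim e^{-\frac43 u^3}e^{-2(x+y)u}$ for $x\ge s_1, y\ge s_2$. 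Similarly $K_{1,2}(x,y)$ is $\Or(1)$ in $u$ after the Gaussian subtraction is handled, but crucially it is integrable and decays in $x,y$; so the product $K_{1,2}K_{2,1}$ already carries the full $e^{-\frac43 u^3 - 2(s_1+s_2)u}$ prefactor plus an extra $e^{-\min\{s_1,s_2\}}$ and $u^{-2}$ from integrating the exponentials and the Airy decay. The point is that $\tilde K - K_{1,2}K_{2,1}$ contains \emph{at least one extra application} of $K_{1,1}$ or $K_{2,2}$ (which are bounded trace-class operators with $\Or(1)$ norm for $s_i\ge 0$, since $K_{1,1}(x,y)=\Ai(x+y)\Id_{x,y>s_1}$ has rapidly decaying kernel), so it inherits the same $e^{-\frac43 u^3 - 2(s_1+s_2)u}$ factor but is not smaller in $u$-power — hence the bound \eqref{ubd} is of the \emph{same order} as the main term, which is exactly what is claimed (a bound on $R_1$, not a statement that $R_1$ is lower order). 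So I would: (i) bound $\|K_{2,1}\|_1 \lesssim u^{-1}e^{-\frac43 u^3 - 2(s_1+s_2)u}e^{-\min\{s_1,s_2\}}$ via the Airy asymptotics and explicit Gaussian/exponential integrals; (ii) bound $\|K_{1,2}\|_1, \|(\Id-K_{1,1})^{-1}K_{1,1}\|$, $\|(\Id-K_{2,2})^{-1}K_{2,2}\|$ all by $\Or(1)$ constants uniformly in $s_1,s_2\ge0$ and $u\ge\frac12$ (here using that the resolvents are bounded since $\|K_{i,i}\|<1$, which follows from $f(s_i)>0$); (iii) combine via $\|\tilde K - K_{1,2}K_{2,1}\|_1 \le (\text{const})\,\|K_{2,1}\|_1$ and the determinant Lipschitz bound to get \eqref{ubd}.

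**Main obstacle.** The delicate point is handling $K_{1,2}$, which contains the subtracted Gaussian term $-e^{-(x-y)^2/4u}/\sqrt{4\pi u}$. Unlike the Airy term in $K_{1,2}$, this piece does not decay as $x+y\to\infty$ along the diagonal, so one must be careful that the restriction $\Id_{x>s_1}\Id_{y>s_2}$ plus the fact that $K_{1,2}$ always appears multiplied by $K_{2,1}$ (which kills everything via $e^{-2(x+y)u}$) is what saves integrability; in particular I would bound $\|K_{1,2}K_{2,1}\|_1$ directly rather than $\|K_{1,2}\|_1 \|K_{2,1}\|_1$, writing the Hilbert–Schmidt norms of the two factors in a product decomposition and using that the $e^{-(x+y)u}$ from $K_{2,1}$ tames the Gaussian. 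A second technical nuisance is making the Airy asymptotic $\Ai(x+y+u^2) = \Or\!\big(e^{-\frac23(x+y+u^2)^{3/2}}\big)$ fully explicit with a constant valid for all $x\ge s_1\ge0$, $y\ge s_2\ge 0$ and $u\ge \tfrac12$, and then extracting $e^{-\frac23 u^3}e^{-(x+y)u}$ cleanly from the $3/2$-power — this requires the inequality $(a+t)^{3/2}\ge a^{3/2} + \tfrac32 a^{1/2}t$ for $a,t\ge0$ applied with $a=u^2$, which is where the condition $u\ge\sqrt{s_1+s_2}$ presumably enters to control the error in this expansion at $t = x+y$. I would organize the proof so that all the $u$-asymptotics of the Airy function are quarantined into one lemma, and the operator-theoretic manipulation (resolvent expansion + determinant Lipschitz bound) into another, then combine.
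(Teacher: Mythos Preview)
Your overall architecture is exactly that of the paper: expand $\tilde K$ via the resolvent identity, set $Q=\tilde K-K_{1,2}K_{2,1}$, and apply the Lipschitz bound $|\det(\Id-\tilde K)-\det(\Id-K_{1,2}K_{2,1})|\le \|Q\|_1\,e^{\|Q\|_1+2\|K_{1,2}K_{2,1}\|_1+1}$. You also correctly identify the genuine obstacle, namely that $K_{1,2}$ is not Hilbert--Schmidt because the heat-kernel piece does not decay along the diagonal.

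There is, however, a concrete misstep in your bookkeeping. The factor $e^{-\min\{s_1,s_2\}}$ in \eqref{ubd} does \emph{not} come from $K_{2,1}$: one only has $\|K_{2,1}\|_2\le \tfrac{1}{2u^{3/2}}e^{-\frac43 u^3-2(s_1+s_2)u}$, with no extra $e^{-s_i}$ decay (the kernel depends on $x+y$ only, so integrating cannot produce a one-sided gain). The extra decay enters through the additional $K_{1,1}$ or $K_{2,2}$ present in every term of $Q$: since $K_{i,i}(x,y)=\Ai(x+y)\Id_{x,y>s_i}$, one has $\|K_{i,i}\|_2\lesssim e^{-2s_i}$, and this is what produces $e^{-\min\{s_1,s_2\}}$. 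Your step (iii), bounding $\|Q\|_1\le C\|K_{2,1}\|_1$ with an $O(1)$ constant, would therefore yield only $\|Q\|_1\lesssim u^{-2}e^{-\frac43 u^3-2(s_1+s_2)u}$, which is not the claimed bound and, more importantly, is of the same order as the main term $\Tr(K_{1,2}K_{2,1})$ in the later application, so the argument would stall there.

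The paper's device to extract this gain while coping with the non-integrability of $K_{1,2}$ is a conjugation (weighting) trick rather than bounding products ``directly'': one sets $\bar K_{1,2}^L(x,y)=e^{-x/2}K_{1,2}(x,y)$, $\bar K_{1,1}(x,y)=K_{1,1}(x,y)e^{y/2}$, so that $K_{1,1}K_{1,2}=\bar K_{1,1}\bar K_{1,2}^L$ with both factors Hilbert--Schmidt. The Airy decay in $K_{1,1}$ absorbs the $e^{y/2}$, giving $\|\bar K_{1,1}\|_2\le e^{-s_1}$, while the transferred $e^{-x/2}$ makes $\bar K_{1,2}^L$ Hilbert--Schmidt with $\|\bar K_{1,2}^L\|_2\lesssim u^{-1/2}$ (this is where the hypothesis $u\ge\sqrt{s_1+s_2}$ is actually used, in bounding the Airy part of $K_{1,2}$). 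Combining via $\|AB\|_1\le\|A\|_2\|B\|_2$ then yields $\|Q\|_1\lesssim u^{-1/2}e^{-\min\{s_1,s_2\}}\|K_{2,1}\|_2$, which gives \eqref{ubd}. Your proposal is close, but you should relocate the source of the $e^{-\min\{s_1,s_2\}}$ and replace ``bound $\|K_{1,2}\|_1=O(1)$'' (which is false) by this weighting.
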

The proof of this proposition is in Section~\ref{sectProofProp34}.

\subsection{The leading term}\label{sec:tr}
Lemma~\ref{LemmaDec} and Proposition~\ref{PropR1} suggest
\begin{equation}
{\cal E}(u;s_1,s_1)\sim \det(\Id-K_{1,2}K_{2,1})-1 \quad \text{ as }u\rightarrow \infty.
\end{equation}
For a trace class operator $\cal K$, the Fredholm determinant is given by
\begin{equation}
	\det(\Id-{\cal K})=\sum_{n=0}^{\infty}\frac{(-1)^n}{n!}\int_{\R^n} dx_1\cdots dx_n\det[{\cal K}(x_i,x_j)]_{i,j=1}^n.
\end{equation}
When ${\cal K}=K_{1,2}K_{2,1}$, this translates to
\begin{equation}\label{asy2}
\det(\Id-K_{1,2}K_{2,1})= 1-\Tr\big(K_{1,2}K_{2,1}\big)+R_2(u;s_1,s_2),
\end{equation}
where $\Tr\big(K_{1,2}K_{2,1}\big)=\int_\R dx \int_\R dy K_{1,2}(x,y)K_{2,1}(y,x)$ and
\begin{equation} \label{Req}
R_2(u;s_1,s_2)=\sum_{n=2}^{\infty}\frac{(-1)^n}{n!}\int_{s_1}^\infty\cdots\int_{s_1}^\infty dx_1\cdots dx_n \det[K_{1,2}K_{2,1}(x_i,x_j)]_{i,j=1}^n.
\end{equation}
From \eqref{Ker}, it is clear that the upper tail of $K_{1,2}K_{2,1}$ in either variables, is determined by that of the function $\Ai$, which is known to decay super-exponentially, see \eqref{asy}. As each of the determinants in \eqref{Req} is a sum of products of elements of the order of $\Tr\big(K_{1,2}K_{2,1}\big)$, one expects the latter to dominate $R_2$ and therefore that
\begin{equation}\label{tra}
	\det(\Id-K_{1,2}K_{2,1})\sim 1-\Tr(K_{1,2}K_{2,1})
\end{equation}
if $\Tr(K_{1,2}K_{2,1})$ is small.

Let us move on to the computation of $\Tr(K_{1,2}K_{2,1})$. We write the kernel entries $(K_{1,2} K_{2,1})(x,y)$ as well as its trace using complex integral representations, which will then be analyzed.
We start with the following identities (see e.g.\ Appendix~A of~\cite{BFP09} for the first and last, while the second is a standard Gaussian integral)
\begin{equation}\label{eqintRepr}
\begin{aligned}
		&\frac{1}{2\pi\I}\int_{\gamma_a} d\xi e^{-\xi^3/3+u\xi^2+x\xi}=\Ai(x+u^2)e^{\frac23u^3+ux},\\
		& \frac1{2\pi\I}\int_{\gamma_a} d\xi e^{u\xi^2+x\xi}= \frac{e^{-\frac{x^2}{4u}}}{\sqrt{4\pi u}},\\
		&\frac{1}{2\pi\I} \int_{\gamma_b} d\eta e^{\frac{\eta^3}3-u\eta^2-x\eta} = \Ai(x+u^2)e^{-\frac23 u^3-ux},
\end{aligned}
\end{equation}
where
\begin{align}
	\gamma_a&=a+\I\R \quad \text{for $a<u$}\label{contour1}\\
	\gamma_b&=b+\I\R \quad \text{for $b>u$}\label{contour2}.
\end{align}
Plugging these identities into \eqref{Ker} we can write
\begin{equation}\label{eq1.24}
\begin{aligned}
		K_{1,2}(x,y)&=\Id_{x>s_1}\Id_{y>s_2}\frac{1}{2\pi\I}\int_{\gamma_a} d\xi \Big[e^{-\xi^3/3+u\xi^2+(x+y)\xi}-e^{u\xi^2+(x-y)\xi}\Big],\\
		K_{2,1}(x,y)&=\Id_{x>s_2}\Id_{y>s_1}\frac{1}{2\pi\I} \int_{\gamma_b} d\eta e^{\frac{\eta^3}3-u\eta^2-(x+y)\eta}.
\end{aligned}
\end{equation}
This leads to the following representation of the kernel $K_{1,2}K_{2,1}$:
\begin{equation}
	\begin{aligned}\label{eq7}
		&(K_{1,2}K_{2,1})(x,y)=\int_\R dz K_{1,2}(x,z) K_{2,1}(z,y)\\
		&=\frac{\Id_{x>s_1}\Id_{y>s_1}}{(2\pi\I)^2}\int_{\gamma_a} d\xi\int_{\gamma_b} d\eta e^{u\xi^2+x\xi}e^{\frac{\eta^3}3-u\eta^2-y\eta}\Bigg[\frac{e^{-\frac{\xi^3}3-s_2(\eta-\xi)}}{\eta-\xi}-\frac{e^{-s_2(\eta+\xi)}}{\eta+\xi}\Bigg].
	\end{aligned}
\end{equation}
Setting $x=y$ and integrating over $x$ (on $[s_1,\infty)$ due to the indicator functions) we get
\begin{equation}
	\begin{aligned}\label{K12K21}
		\Tr(K_{1,2}K_{2,1})=& \frac{1}{(2\pi\I)^2}\int_{\gamma_a} d\xi \int_{\gamma_b} d\eta\, e^{\frac{\eta^3}3-u\eta^2-(s_1+s_2)\eta}\frac1{\eta-\xi}\\
		&\times\Big[e^{-\xi^3/3+u\xi^2 +(s_1+s_2)\xi}\frac{1}{\eta-\xi}-e^{u\xi^2+(s_1-s_2)\xi}\frac{1}{\xi+\eta}\Big].
	\end{aligned}
\end{equation}

We begin with determining the asymptotic behavior of $\Tr(K_{1,2}K_{2,1})$.
\begin{prop}\label{propTrace} For all $0\leq s_1,s_2\leq \sqrt{u}$,
	\begin{equation}\label{eq3.23}
		-\Tr(K_{1,2}K_{2,1})= \frac{1}{16\pi u^4}e^{-2(s_1+s_2)u-\frac43 u^3}\left[s_1s_2+\Or(u^{-1/4})\right]
	\end{equation}
as $u\to\infty$.
\end{prop}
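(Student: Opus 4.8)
The plan is to perform a steepest-descent analysis of the double contour integral representation \eqref{K12K21} for $\Tr(K_{1,2}K_{2,1})$. The two terms in the bracket in \eqref{K12K21} should be treated separately; call them $T_1$ (the one with $e^{-\xi^3/3}$ and denominator $(\eta-\xi)^{-2}$) and $T_2$ (the one with no $\xi^3$ and denominators $(\eta-\xi)^{-1}(\xi+\eta)^{-1}$). In $T_1$ the $\xi$-integrand has exponent $-\xi^3/3+u\xi^2+(s_1+s_2)\xi$ and the $\eta$-integrand has exponent $\eta^3/3-u\eta^2-(s_1+s_2)\eta$; these are the standard Airy-type exponents whose saddle points sit near $\xi=\eta=0$ and near $\xi=\eta=2u$. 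Since the contours $\gamma_a$ ($a<u$) and $\gamma_b$ ($b>u$) are on opposite sides of $u$, the relevant double saddle is at $\xi\approx 0$, $\eta\approx 2u$ (or symmetrically), and evaluating $-\xi^3/3+u\xi^2$ at $\xi=0$ gives $0$ while $\eta^3/3-u\eta^2$ at $\eta=2u$ gives $-\tfrac43u^3$; this is the source of the $e^{-\frac43u^3}$ factor. The linear terms $\pm(s_1+s_2)\xi$, $\mp(s_1+s_2)\eta$ evaluated at these saddles produce the $e^{-2(s_1+s_2)u}$ factor (from $-(s_1+s_2)\cdot 2u$), after checking that the shift of saddle due to the linear term is lower order given $s_1,s_2\le\sqrt u$.

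First I would substitute $\eta = 2u + \I w$ (deforming $\gamma_b$ to pass through $2u$, legitimate since $2u>u$) and $\xi=\I z$ (deforming $\gamma_a$ through $0$, legitimate since $0<u$), expand the cubic exponents to the relevant order, and extract the Gaussian integrals. The key point is to get the polynomial prefactor right: the factor $(\eta-\xi)^{-2}\approx (2u)^{-2}$ at leading order contributes $u^{-2}$, and the Gaussian normalizations from the two $w$ and $z$ integrations each contribute a further power of $u^{-1/2}$, but one must be careful because the quadratic form in $z$ near $\xi=0$ is $u\xi^2 = -uz^2$, giving a Gaussian width $\sim u^{-1/2}$, while near $\eta=2u$ the quadratic coefficient is also of order $u$. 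Tracking all of these yields the $u^{-4}$ in \eqref{eq3.23}. The coefficient $s_1 s_2$ is the subtle part: it should emerge from expanding the interaction between $T_1$ and $T_2$, i.e. $T_2$ alone contributes a term that partially cancels the naive leading contribution of $T_1$, and the surviving piece is proportional to $s_1 s_2$. Concretely, I expect that at $s_1=0$ or $s_2=0$ the leading $u^{-4}$ term vanishes due to a cancellation between the $(\eta-\xi)^{-2}$ term and the $(\eta-\xi)^{-1}(\xi+\eta)^{-1}$ term — this is exactly the mechanism behind Widom's computation in \cite{Wid03} — and so one must expand one more order in $s_1,s_2$ to see the genuine leading behavior.

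The main obstacle will be precisely this cancellation bookkeeping: showing that the "obvious" leading terms of $T_1$ and $T_2$ cancel and isolating the next-order term $\propto s_1 s_2 u^{-4}$ while simultaneously controlling all error terms uniformly for $0\le s_1,s_2\le\sqrt u$ so that they genuinely are $\Or(u^{-1/4})$ relative to $s_1 s_2$ (or absolutely $\Or(u^{-17/4})$ after the exponential factors are pulled out). I would organize the error control by (i) splitting each contour into a central portion of length $\Or(u^{-1/2+\epsilon})$ around the saddle and a tail, (ii) bounding the tails using the real part of the cubic exponents (standard, giving super-polynomially small contributions), and (iii) on the central portion, Taylor expanding the exponent to cubic order in the deviation and the prefactor $1/(\eta-\xi)$, $1/(\eta+\xi)$ to sufficient order, then doing the resulting Gaussian moment integrals explicitly. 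The symmetry $s_1\leftrightarrow s_2$ and $\xi\leftrightarrow -\xi$-type relations should be exploited to reduce the number of cases. I would also need to verify that the other double-saddle configuration ($\xi\approx 2u$, $\eta\approx 0$) is not accessible given the contour constraints $a<u<b$, so it does not contribute, and that the contributions from $\xi,\eta$ both near $0$ or both near $2u$ are either forbidden or subdominant.
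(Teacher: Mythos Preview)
Your proposal is correct and follows essentially the same steepest-descent approach as the paper: split into $T_1$ and $T_2$, pass the $\xi$-contour through the critical point near $0$ and the $\eta$-contour through the one near $2u$, extract the common factor $e^{-\frac43 u^3-2(s_1+s_2)u}/(16\pi u^3)$, and observe that the leading constants cancel so that the surviving term is $-s_1s_2/u$. The only implementation difference is that the paper places the contours through the \emph{exact} critical points $a_1=u-\sqrt{u^2+s_1+s_2}$, $a_2=(s_2-s_1)/(2u)$, $b=u+\sqrt{u^2+s_1+s_2}$ (rather than $0$ and $2u$), which makes the next-order corrections appear directly as the explicit factors $e^{-(s_1+s_2)^2/(2u)}$ for $T_1$ and $e^{-(s_1^2+s_2^2)/(2u)}$ for $T_2$, whose difference immediately yields $s_1s_2/u$; your choice of $\xi=\I z$, $\eta=2u+\I w$ is equivalent after completing the square, but be aware that the ``shift of saddle due to the linear term'' is precisely what produces the $s_1s_2$ and must not be discarded as lower order.
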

\begin{proof}
To get the asymptotic behavior of the trace, we need to choose the parameters $a,b$ in the integration contours. We do it in a way that the dominant parts in the exponents of the contour integrals in \eqref{K12K21} are minimized.
\begin{equation}\label{eq3.32}
\begin{aligned}
    \eqref{K12K21}&= \frac{1}{(2\pi\I)^2}\int_{\gamma_a} d\xi_1 \int_{\gamma_b} d\eta\, e^{\frac{\eta^3}3-u\eta^2-(s_1+s_2)\eta}e^{-\xi_1^3/3+u\xi_1^2 +(s_1+s_2)\xi_1}\frac1{(\eta-\xi_1)^2}\\
	&-\frac{1}{(2\pi\I)^2}\int_{\gamma_a} d\xi_2 \int_{\gamma_b} d\eta\, e^{\frac{\eta^3}3-u\eta^2-(s_1+s_2)\eta}e^{u\xi_2^2+(s_1-s_2)\xi_2}\frac{1}{(\xi_2+\eta)(\eta-\xi_2)}.
\end{aligned}
\end{equation}

Now we need to choose the parameters $a,b$. For that reason we search for the minimizers of the different exponents in \eqref{eq3.32}:
	\begin{enumerate}
		\item[(a)] for
		\begin{equation}
			\frac{d}{d\xi_1}\Big(-\frac{\xi_1^3}3+u\xi_1^2+(s_1+s_2)\xi_1\Big)=2u\xi_1-\xi_1^2=0,
		\end{equation}
		which is solved for $\xi_1=u-\sqrt{u^2+s_1+s_2}=:a_1$ or $\xi_1=u+\sqrt{u^2+s_1+s_2}$. The solution which satisfy the constraint $\Re(\xi_1)<u$ in \eqref{contour1} is also the minimum.
		\item[(b)] For
		\begin{equation}
			\frac{d}{d\xi_2}\Big(u\xi_2^2+(s_1-s_2)\xi_2\Big)=2u\xi_2+(s_1-s_2)
		\end{equation}
we see that $\xi_2=(s_2-s_1)/(2u)=:a_2$ is the minimum.
		\item[(c)] Similarly,
		\begin{equation}
			\frac{d}{d\eta}\Big(\frac{\eta^3}3-u\eta^2-(s_1+s_2)\eta\Big)=\eta^2-2u\eta-(s_1+s_2)=0,
		\end{equation}
		has the minimum is at $\eta=u+\sqrt{u^2+s_1+s_2}=:b$ satisfies ${\rm{Re}(\eta)}>u$.
	\end{enumerate}

So let us use the following change of variables
\begin{equation}\label{chov}
\xi_1=a_1+\frac{z}{\sqrt{u}},\quad \xi_2=a_2+\frac{z}{\sqrt{u}},\quad \eta=b+\frac{w}{\sqrt{u}}
\end{equation}
with $z,w\in\I\R$ into \eqref{eq3.32}. The two terms are analyzed in the same way, thus we write the details only for the first one.

Denote $\sigma=(s_1+s_2)/u^2\leq 2 u^{-3/2}$ and consider the first term in \eqref{eq3.32}. We have
\begin{equation}
\begin{aligned}
&e^{\frac{\eta^3}3-u\eta^2-(s_1+s_2)\eta}e^{-\frac13\xi_1^3+u\xi_1^2 +(s_1+s_2)\xi_1}= e^{-\frac43u^3 (1+\sigma)^{3/2}} e^{\sqrt{1+\sigma}(w^2+z^2)} e^{\frac{w^3-z^3}{3 u^{3/2}}}\\
&=e^{-\frac43 u^3-2(s_1+s_2)u-\frac{(s_1+s_2)^2}{2u}(1+\Or(\sigma))} e^{(z^2+w^2)(1+\Or(\sigma;z u^{-3/2};w u^{-3/2}))}
\end{aligned}
\end{equation}
and for the prefactor\footnote{The notation $\Or(a_1;...\,;a_k)$ stands for $\Or(a_1)+...+\Or(a_k)$.}
\begin{equation}
\frac1{(\eta-\xi_1)^2} = \frac{1}{4u^2}(1+\Or(z u^{-3/2};w u^{-3/2};\sigma)).
\end{equation}
Each term in the exponential which is cubic in $z,w\in\I\R$ is purely imaginary, thus its exponential is bounded by $1$. Furthermore, the quadratic terms in $z$ and $w$ have a positive prefactor $\sqrt{1+\sigma}\geq 1$. Thus integrating over $|z|>u^{1/4}$ and/or $|w|>u^{1/4}$ we get a correction term of order $e^{-\sqrt{u}}$ times the value of the integrand at $z=w=0$. For the rest of the integral, for which $|z|,|w|\leq u^{1/4}$, the error terms $\Or(z u^{-3/2};w u^{-3/2};\sigma)=\Or(u^{-5/4})$.

So the first term in \eqref{eq3.32} is given by
\begin{multline}\label{eq3.39}
\frac{e^{-\frac43 u^3-2(s_1+s_2)u-\frac{(s_1+s_2)^2}{2u}(1+\Or(\sigma))}}{4 u^3} \bigg[\Or(e^{-\sqrt{u}})\\ +\frac{(1+\Or(u^{-5/4}))}{(2\pi\I)^2}\int_{-\I u^{1/4}}^{\I  u^{1/4}} dz \int_{-\I  u^{1/4}}^{\I  u^{1/4}} dw
e^{(z^2+w^2)(1+\Or(u^{-5/4}))}\bigg].
\end{multline}
Finally, extending the Gaussian integrals to $\I\R$, we get an error term $\Or(e^{-\sqrt{u}})$ only and using
\begin{equation}
\frac1{(2\pi\I)^2}\int_{\I\R}dz \int_{\I\R}dw\,e^{w^2+z^2}=\frac1{4\pi}
\end{equation}
we get
\begin{equation}\label{eq3.41}
\begin{aligned}
\eqref{eq3.39}&=\frac{e^{-\frac43 u^3-2(s_1+s_2)u-\frac{(s_1+s_2)^2}{2u}(1+\Or(\sigma))}}{16\pi u^3}(1+\Or(u^{-5/4}))\\
&=\frac{e^{-\frac43 u^3-2(s_1+s_2)u}}{16 \pi u^3}\left[1-\frac{(s_1+s_2)^2}{2u}+\Or(u^{-5/4})\right]
\end{aligned}
\end{equation}
A similar computation for the second term in \eqref{eq3.32} leads to
\begin{equation}\label{eq3.42}
-\frac{e^{-\frac43 u^3-2(s_1+s_2)u}}{16 \pi u^3}\left[1-\frac{s_1^2+s_2^2}{2u}+\Or(u^{-5/4})\right].
\end{equation}
Summing up \eqref{eq3.41} and \eqref{eq3.42} we obtain
\begin{equation}
-\Tr(K_{1,2}K_{2,1})=\frac{e^{-\frac43 u^3-2(s_1+s_2)u}}{16 \pi u^4}\left[s_1s_2+ \Or(u^{-1/4})\right].
\end{equation}
\end{proof}

\subsection{Bounding lower order terms}
To show \eqref{tra} we need to get a bound on $R_2(u;s_1,s_2)$ from \eqref{Req}, which is $o(\Tr\big(K_{1,2}K_{2,1}\big))$.

\begin{prop}\label{propR2}
There exists a constant $C>0$ such that
\begin{equation}
|R_2(u;s_1,s_2)|\leq \frac{C}{u^6} e^{-4(s_1+s_2)u} e^{-\frac83 u^3}
\end{equation}
uniformly in $s_1,s_2\in \R$.
\end{prop}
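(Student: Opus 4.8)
The plan is to bound the Fredholm series $R_2(u;s_1,s_2)=\sum_{n\ge 2}\frac{(-1)^n}{n!}\int_{[s_1,\infty)^n}\det[(K_{1,2}K_{2,1})(x_i,x_j)]_{i,j=1}^n\,dx_1\cdots dx_n$ by Hadamard's inequality, so that the whole sum is controlled by the size of the single integral $\Tr(K_{1,2}K_{2,1})$ squared, plus rapidly decaying tails. Concretely, the first step is to obtain a pointwise kernel bound of the form
\begin{equation}
|(K_{1,2}K_{2,1})(x,y)|\leq C\,e^{-2(x+y)u-\tfrac43 u^3}\,g(x)g(y)
\end{equation}
for $x,y\ge s_1$, where $g$ is an integrable (indeed super-exponentially decaying) function independent of $u$ for $u$ large; this comes from the integral representation \eqref{eq7} by deforming the $\xi$ and $\eta$ contours exactly as in the proof of Proposition~\ref{propTrace} (with $a_1,b$ chosen as there, but now keeping $x,y$ as free variables rather than setting $x=y$ and integrating), so that one extracts the factor $e^{-\frac43 u^3}$ from the saddle and the factor $e^{-2(x+y)u}$ from the shift of contours, with the remaining Gaussian-type integrals in $z,w$ giving the bounded prefactor and the Airy decay giving $g$.

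Given such a bound, Hadamard's inequality yields
\begin{equation}
\big|\det[(K_{1,2}K_{2,1})(x_i,x_j)]_{i,j=1}^n\big|\leq n^{n/2}\prod_{i=1}^n \Big(C e^{-\frac43 u^3}e^{-4 x_i u} g(x_i)^2\Big)\cdot(\text{symmetrization}),
\end{equation}
so that each $n$-fold integral is at most $n^{n/2}C^n e^{-\frac{4n}{3}u^3}\big(\int_{s_1}^\infty e^{-4xu}g(x)^2\,dx\big)^n$. The integral $\int_{s_1}^\infty e^{-4xu}g(x)^2\,dx$ is $\Or(u^{-1}e^{-4s_1 u})$ for large $u$ (the $e^{-4xu}$ factor localizes near $x=s_1$ and contributes $\tfrac{1}{4u}e^{-4s_1 u}$, $g^2$ being bounded there), and symmetrically in the other variable; more precisely one should set up the estimate so that the product over the two ``slots'' of $K_{1,2}$ and $K_{2,1}$ gives $e^{-4(s_1+s_2)u}$ matching the statement — here one uses that $K_{1,2}K_{2,1}$ genuinely carries the $s_2$-dependence through the inner integration over $z\in[s_2,\infty)$, exactly as visible in \eqref{eq7}. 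Summing the geometric-type series $\sum_{n\ge 2}\frac{n^{n/2}}{n!}(C u^{-1})^n e^{-\frac{4n}{3}u^3}$, the $n=2$ term dominates for $u$ large (since $e^{-\frac43 u^3}\to 0$ kills the $n^{n/2}/n!$ growth), giving the claimed $\Or(u^{-6}e^{-4(s_1+s_2)u}e^{-\frac83 u^3})$; note the exponent $u^{-6}$ rather than $u^{-8}$ because Hadamard loses a factor $n^{n/2}=2$ at $n=2$ and the kernel bound is not sharp in the polynomial prefactor, which is harmless since we only need $R_2=o(\Tr(K_{1,2}K_{2,1}))$.

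The main obstacle is the uniformity in $s_1,s_2\in\R$ — in particular the statement claims a bound valid for all real $s_1,s_2$, including negative values, whereas the contour-shift argument naturally produces the clean factor $e^{-2(x+y)u}$ with $x,y\ge s_1$ and the inner variable $\ge s_2$. For $s_1,s_2\ge 0$ this is immediate from the above; for negative $s_i$ one has to check that the bound only improves (the indicator functions restrict to a larger region but the integrand is still controlled by the same Gaussian/Airy estimates, and $e^{-4(s_1+s_2)u}$ only grows, so one must verify the kernel bound itself still holds, i.e.\ that pushing the lower limit of the $z$-integration below $0$ does not destroy convergence — it does not, because the $\eta$-contour has $\Re\eta=b>u>0$ so $e^{-z(\eta-\xi)}$ or the relevant factor decays as $z\to-\infty$ only if $\Re(\eta-\xi)>0$, which holds; a little care is needed with the second term $e^{-s_2(\eta+\xi)}/(\eta+\xi)$ where one uses $\Re(\eta+\xi)=b+a_1=2u>0$). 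The second, more routine, difficulty is bookkeeping the error terms in the contour estimate uniformly in $x,y,s_1,s_2$ rather than just at the trace's diagonal point; this is handled exactly as in Proposition~\ref{propTrace} by splitting $|z|,|w|\le u^{1/4}$ versus the tails, and the constants can be taken uniform once $u$ exceeds an absolute threshold, with smaller $u$ absorbed by adjusting $C$.
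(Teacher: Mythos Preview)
Your overall strategy --- pointwise bound on $(K_{1,2}K_{2,1})(x,y)$ from the contour representation \eqref{eq7}, then Hadamard's inequality, then summation of the Fredholm series with the $n=2$ term dominating --- is exactly the paper's. The discrepancy is in the kernel bound itself.

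You claim a bound of the form $|(K_{1,2}K_{2,1})(x,y)|\le C e^{-2(x+y)u-\frac43 u^3}g(x)g(y)$ obtained from the contours $a_1,b$ of Proposition~\ref{propTrace}. This is not what those contours give. The $x$--dependence enters only through the factor $e^{x\xi}$, so any decay in $x$ requires $\Re\xi<0$ of order $u$; but $a_1=u-\sqrt{u^2+s_1+s_2}$ is $O((s_1+s_2)/u)$, essentially zero, and pushing $\Re\xi$ down to $-2u$ would cost a factor $e^{u\xi^2}\sim e^{4u^3}$ that destroys the estimate. There is likewise no separate ``Airy decay $g$'' once you work with the contour representation: all the decay is already in the exponential. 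In short, a symmetric bound with $e^{-2(x+y)u}$ is not achievable.

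The paper instead takes the simpler contours $\xi=z/\sqrt u$, $\eta=2u+w/\sqrt u$ with $z,w\in\I\R$ (i.e.\ $a=0$, $b=2u$, \emph{independent} of $s_1,s_2$). Every oscillatory factor $e^{xz/\sqrt u}$, $e^{-yw/\sqrt u}$, $e^{-s_2(w\pm z)/\sqrt u}$, $e^{\pm z^3/(3u^{3/2})}$ then has modulus $1$, and the denominators $|2u+(w\pm z)/\sqrt u|\ge 2u$, yielding the clean \emph{asymmetric} bound
\[
|(K_{1,2}K_{2,1})(x,y)|\le \Id_{x>s_1}\Id_{y>s_1}\,\frac{e^{-\frac43 u^3-2(s_2+y)u}}{4\pi u^2},
\]
with no decay in $x$ at all. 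This is enough: Hadamard applied columnwise (the bound depends only on the column variable $x_j$) gives $|\det[\cdot]|\le n^{n/2}\prod_j \frac{e^{-\frac43 u^3-2(s_2+x_j)u}}{4\pi u^2}$, and integrating each $x_j$ over $[s_1,\infty)$ produces exactly $\big(\frac{1}{8\pi u^3}e^{-\frac43 u^3-2(s_1+s_2)u}\big)^n$. The series then sums as you describe.

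This also dissolves your worry about uniformity for negative $s_1,s_2$: because the contours do not depend on $s_1,s_2$, the bound $|P|\le 1/u$ holds for every real $s_1,s_2$ with no extra argument, and the integration $\int_{s_1}^\infty e^{-2yu}dy=\frac{1}{2u}e^{-2s_1 u}$ is valid for all real $s_1$. So your second paragraph of the proposal is addressing a difficulty that the correct contour choice simply does not create.
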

\begin{proof}
In \eqref{eq7} we use the change of variables
	\begin{equation}
		\xi = \frac{z}{\sqrt{u}},\quad
		\eta =2u+\frac{w}{\sqrt{u}}\label{chov4},
	\end{equation}
where $z,w\in\I\R$. This leads to
\begin{equation}\label{eq1.40}
\eqref{eq7} = \Id_{x>s_1}\Id_{y>s_1}\frac{e^{-\frac43 u^3-2u (s_2+y)}}{(2\pi\I)^2 u}\int_{\I\R}dz\int_{\I\R}dw e^{z^2+w^2} P(z,w,x,y)
\end{equation}
with
\begin{equation}
P(z,w,x,y)=e^{\frac{x z}{\sqrt{u}}+\frac{w^3}{3 u^{3/2}}-\frac{w y}{\sqrt{u}}} \bigg[
\frac{e^{-\frac{z^3}{3 u^{3/2}}-s_2\frac{w-z}{\sqrt{u}}}}{2u+\frac{w-z}{\sqrt{u}}}-\frac{e^{-s_2\frac{z+w}{\sqrt{u}}}}{2u+\frac{z+w}{\sqrt{u}}}
\bigg].
\end{equation}
Since $x,y\in\R$ and $z,w\in\I\R$, we get the simple bound $|P(z,w,x,y)|\leq \frac{1}{u}$, while $e^{z^2+w^2}$ is real. Performing the Gaussian integral we then get
\begin{equation}
|\eqref{eq1.40}|\leq \Id_{x>s_1}\Id_{y>s_1}\frac{e^{-\frac43 u^3-2 (s_2+y)u}}{4\pi u^2}.
\end{equation}

Let $K=K_{1,2}K_{2,1}$. Hadamard's inequality\footnote{Let $A$ be a $n\times n$ matrix with $|A_{i,j}|\leq 1$. Then $|\det(A)|\leq n^{n/2}$.} gives
	\begin{equation}
		|\det[K(x_i,x_j)]_{i,j=1}^n|\leq n^{n/2} \prod_{j=1}^n \frac{e^{-\frac{4u^3}{3}-2(x_j+s_2)u}\Id_{x_j>s_1}}{4\pi u^2}
	\end{equation}
	so that
	\begin{equation}
		\bigg|\int_{x_1\geq s_1}dx_1\cdots\int_{x_n\geq s_1}dx_n\det[K(x_i,x_j)]_{i,j=1}^n \bigg|\leq n^{n/2}\bigg(\frac{e^{-\frac{{4u^3}}{3}-2(s_1+s_2)u}}{8\pi u^3}\bigg)^n.
	\end{equation}	
Denote $M=\frac{e^{-\frac{{4u^3}}{3}-2(s_1+s_2)u}}{8\pi u^3}$. Then there exists $C>0$ such that
	\begin{equation}
		|R_2(u;s_1,s_2)|\leq\sum_{n=2}^{\infty}\frac{M^nn^{n/2}}{n!}\leq C M^2\leq Cu^{-6}e^{-\frac{{8u^3}}{3}-4(s_1+s_2)u}.
	\end{equation}
	This completes the proof.
\end{proof}

We have now all the ingredients to complete the proof of Theorem~\ref{ThmLowerBound}.
\begin{proof}[Proof of Theorem~\ref{ThmLowerBound}]
We have
	\begin{equation}
		\begin{aligned}
			F(u;s_1,s_2)\stackrel{\eqref{FdK}}{=}&f(s_1)f(s_2)\det(\Id-\tilde{K})\\
			\stackrel{\eqref{ubd}}{=}&f(s_1)f(s_2)[\det(\Id-K_{1,2}K_{2,1})+R_1(u;s_1,s_2)]\\
			\stackrel{\eqref{asy2}}{=}&f(s_1)f(s_2)[1-\Tr(K_{1,2}K_{2,1})+R(u;s_1,s_2)],
		\end{aligned}
	\end{equation}
where
\begin{equation}
R(u;s_1,s_2)=R_1(u;s_1,s_2)+R_2(u;s_1,s_2).
\end{equation}
It follows that
\begin{equation}
	F(u;s_1,s_2)-f(s_1)f(s_2)=-f(s_1)f(s_2)\Tr(K_{1,2}K_{2,1})+f(s_1)f(s_2)R(u;s_1,s_2).
\end{equation}
We shall apply the lower bound \eqref{eq1.7} for the covariance for an appropriate choice of $\alpha=\alpha(u)$ and $\beta=\beta(u)$ such the contribution of error term $f(s_1)f(s_2)R(u;s_1,s_2)$ is of a subleading order.

We shall choose $\alpha>0$ depending on $u$ and for concreteness set $\beta=\alpha+1$. Thus we integrate over a region of area $1$. By Proposition~\ref{propR2}, the error term $R_2(u;s_1,s_2)$ is much smaller than the leading term coming from the trace, see Proposition~\ref{propTrace} (of order $e^{-\frac43 u^3}$ smaller) for $s_1,s_2\ll u^2$. However, the exponential term in $u$ in the error bound of $R_1(u;s_1,s_2)$ coming from \eqref{ubd} is of the same order, namely $e^{-2(s_1+s_2)u-\frac43 u^3}$. The difference is that in the trace we have a prefactor $\sim \frac{s_1 s_2}{u^4}$, while in the bound for $R_1$ we have a prefactor $\sim\frac{e^{-\min\{s_1,s_2\}}}{u^2}$. Thus, in order to ensure that the contribution from $R_1$ is subleading, we can take $s_1,s_2\sim c\ln(u)$ for any $c\geq 3$. Therefore choosing $\alpha=3\ln(u)$ and using the fact that for all $x\geq 0$,  $(x)=F_{\rm GOE}(2^{2/3}x)$ is bounded away from 0 (in fact, one can numerically check $f(x)=F_{\rm GOE}(2^{2/3}x)\in [F_{\rm GOE}(0),1]=[0.83...,1]$), we finally get the claimed result.
\end{proof}

\subsection{Proof of Proposition~\ref{PropR1}}\label{sectProofProp34}
To prove Proposition~\ref{PropR1}, we begin with the following well known bound (see e.g.\ Lemma~4(d), Chapter XIII.17 of~\cite{RS78III})
	\begin{equation}\label{ub13}
	|\det(\Id-\tilde{K})-\det(\Id-K_{1,2}K_{2,1})|\leq \|Q\|_1 e^{\|Q\|_1+2\|K_{1,2}K_{2,1}\|_1+1}.
	\end{equation}
	where $Q=\tilde{K}-K_{1,2}K_{2,1}$, where $\|\cdot\|_1$ is the trace-norm (see e.g.~\cite{Sim00}). From Lemmas~\ref{lem:K12K21} and~\ref{lem:Qb} below, the exponent in the display above is bounded and the result follows.
	\qed

In the remainder of this section we prove the two results Lemma~\ref{lem:K12K21} and Lemma~\ref{lem:Qb} used above. We first need to prove some auxiliary bounds. From the identity
\begin{equation}\label{id}
	(\Id-K_{1,1})^{-1}=\Id+(\Id-K_{1,1})^{-1}K_{1,1}
\end{equation}
we see that
\begin{equation}\label{id2}
	\begin{aligned}
		Q=&\tilde{K} - K_{1,2} K_{2,1}=(\Id-K_{1,1})^{-1}K_{1,1}K_{1,2}(\Id-K_{2,2})^{-1}K_{2,2}K_{2,1}\\
		&+K_{1,2}(\Id-K_{2,2})^{-1}K_{2,2}K_{2,1}+(\Id-K_{1,1})^{-1}K_{1,1}K_{1,2}K_{2,1}.
	\end{aligned}
\end{equation}
Recall that from \eqref{ub13} we need to bound $\|Q\|_1$. Thus it is enough to bound the $\|\cdot\|_1$-norm of each of the terms on the right hand side of \eqref{id2}.
Since $K_{1,2}(x,y)$ is not decaying as $x=y\to \infty$, we could either work in weighted $L^2$ spaces, or as we do here, introduce some weighting in the kernel elements. Namely define
\begin{equation}
	\begin{aligned}
		\bar{K}_{1,2}^L(x,y)&=e^{-\frac x2} K_{1,2}(x,y), \quad
		\bar{K}_{1,2}^R(x,y)= K_{1,2}(x,y)e^{-\frac y2},\\
		\bar{K}_{1,1}(x,y)&= K_{1,1}(x,y) e^{\frac y2},\quad
		\bar{K}_{2,2}(x,y)= e^{\frac x2}K_{2,2}(x,y).
	\end{aligned}
\end{equation}
Also, we use the norm inequalities $\|A B\|_1\leq \|A\|_2 \|B\|_2$ and $\|A\|_2\leq \|A\|_1$, with $\|\cdot\|_2$ the Hilbert-Schmidt norm given by
$\|K\|_2=\Big(\int dxdy (K(x,y))^2\Big)^{1/2}$.

These norm inequalities, the fact that $K_{i,i}$ and $(\Id-K_{i,i})^{-1}$ commute and the identity $K_{1,1}K_{1,2}=\bar{K}_{1,1}\bar{K}_{1,2}^L$ lead to
	\begin{equation}\label{ub17a}
		\|(\Id-K_{1,1})^{-1}K_{1,1}K_{1,2}K_{2,1}\|_1\leq \|(\Id-K_{1,1})^{-1}\|_2\|\bar{K}_{1,1}\|_2\|\bar{K}^L_{1,2}\|_2\|K_{2,1}\|_2.
	\end{equation}	
Moreover, using $K_{1,2}K_{2,2}=\bar{K}^R_{1,2}\bar{K}_{2,2}$, we get
	\begin{equation}\label{ub17b}
		\begin{aligned}
			\|K_{1,2}(\Id-K_{2,2})^{-1}K_{2,2}K_{2,1}\|_1 &=\|K_{1,2}K_{2,2}(\Id-K_{2,2})^{-1}K_{2,1}\|_1 \\
			&\leq \|\bar{K}_{1,2}^R\|_2\|\bar{K}_{2,2}\|_2\|(\Id-K_{2,2})^{-1}\|_2\|K_{2,1}\|_2,
		\end{aligned}
	\end{equation}	
and finally
	\begin{equation}\label{ub17c}
		\begin{aligned}
			& \|(\Id-K_{1,1})^{-1}K_{1,1}K_{1,2}(\Id-K_{2,2})^{-1}K_{2,2}K_{2,1}\|_1 \\
			&\leq \|(\Id-K_{1,1})^{-1}\|_2\|\bar{K}_{1,1}\|_2\|\bar{K}^L_{1,2}\|_2\|(\Id-K_{2,2})^{-1}\|_2\|K_{2,2}\|_2\|K_{2,1}\|_2.
		\end{aligned}
	\end{equation}	
	We turn to bound each of the terms on the right hand side of each of the inequalities in \eqref{ub17a}-\eqref{ub17c}.

We also use the following change of variables: for $a,b\in \R$, define
\begin{equation}\label{LT}
\begin{aligned}
		\omega = (x-a)+(y-b),\quad	\zeta=(x-a)-(y-b),
	\end{aligned}
\end{equation}
which give $x=a+\frac12(\omega+\zeta)$ and $y=b+\frac12(\omega-\zeta)$. So the integral over $(x,y)\in [a,\infty)\times[b,\infty)$ becomes an integral over $(\omega,\zeta)$ with $\omega\geq 0$ and $\zeta\in [-\omega,\omega]$.
\begin{lem}\label{lem1.6}
Uniformly for all $s_1,s_2\geq 0$,
\begin{equation}\label{Ki}
\|K_{i,i}\|_2\leq \tfrac12 e^{-2 s_i},\quad \|\bar{K}_{i,i}\|_2\leq e^{-s_i},\quad \|(\Id-K_{i,i})^{-1}\|_2\leq 2,
\end{equation}
for $i\in\{1,2\}$.
\end{lem}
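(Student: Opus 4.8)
The three bounds are all obtained by direct estimation of the relevant Hilbert--Schmidt norms using the classical super-exponential decay of the Airy function on the positive real axis, namely $|\Ai(x)|\leq C e^{-\frac23 x^{3/2}}$ and, more to the point, the cruder consequence $|\Ai(x)|\leq e^{-x}$ valid for $x$ large enough (with an appropriate constant in general); I will quote this from the estimate labelled \eqref{asy} referenced earlier in the paper. First consider $\|K_{i,i}\|_2$. From \eqref{Ker} we have $K_{i,i}(x,y)=\Id_{x>s_i}\Id_{y>s_i}\Ai(x+y)$, so
\[
\|K_{i,i}\|_2^2=\int_{s_i}^\infty\!\!\int_{s_i}^\infty \Ai(x+y)^2\,dx\,dy.
\]
Applying the change of variables \eqref{LT} with $a=b=s_i$, so that $x+y=2s_i+\omega$ and the Jacobian is $\tfrac12$, the $\zeta$-integral over $[-\omega,\omega]$ contributes a factor $2\omega$, giving $\|K_{i,i}\|_2^2=\int_0^\infty \omega\,\Ai(2s_i+\omega)^2\,d\omega$. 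Bounding $\Ai(2s_i+\omega)^2\leq e^{-2(2s_i+\omega)}$ (again invoking \eqref{asy}; for small arguments one absorbs the bounded region into the constant) yields $\|K_{i,i}\|_2^2\leq e^{-4s_i}\int_0^\infty \omega e^{-2\omega}\,d\omega=\tfrac14 e^{-4s_i}$, hence $\|K_{i,i}\|_2\leq\tfrac12 e^{-2s_i}$.

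For the weighted kernels, recall $\bar K_{i,i}$ inserts an extra factor $e^{y/2}$ (for $i=1$) or $e^{x/2}$ (for $i=2$); by the symmetry $K_{i,i}(x,y)=K_{i,i}(y,x)$ the two cases give the same norm, so it suffices to treat $\bar K_{1,1}(x,y)=\Id_{x>s_1}\Id_{y>s_1}\Ai(x+y)e^{y/2}$. Then
\[
\|\bar K_{1,1}\|_2^2=\int_{s_1}^\infty\!\!\int_{s_1}^\infty \Ai(x+y)^2 e^{y}\,dx\,dy.
\]
Using $\Ai(x+y)^2\leq e^{-2(x+y)}$ one gets $\int_{s_1}^\infty e^{-2x}\,dx\cdot\int_{s_1}^\infty e^{-2y}e^{y}\,dy=\tfrac12 e^{-2s_1}\cdot e^{-s_1}$, so $\|\bar K_{1,1}\|_2^2\leq \tfrac12 e^{-3s_1}\leq e^{-2s_1}$ for $s_1\geq 0$, i.e.\ $\|\bar K_{1,1}\|_2\leq e^{-s_1}$; again the finitely many small-argument contributions are absorbed into the constant one is free to choose in \eqref{asy}. (One should double-check the numerical constants, adjusting the claimed bound or the reference constant if the crude $e^{-x}$ estimate needs a prefactor; this is the one mildly delicate bookkeeping point.)

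Finally, for $\|(\Id-K_{i,i})^{-1}\|_2$: since $K_{i,i}$ is a trace-class, self-adjoint, nonnegative operator (it is the Airy$_1$ one-point kernel restricted to $(s_i,\infty)$, whose Fredholm determinant is the GOE-type distribution function $f(s_i)\in(0,1]$), its spectrum lies in $[0,1)$, so $\Id-K_{i,i}$ is invertible with $\|(\Id-K_{i,i})^{-1}\|_{\mathrm{op}}=(1-\|K_{i,i}\|_{\mathrm{op}})^{-1}$. Combining $\|K_{i,i}\|_{\mathrm{op}}\leq\|K_{i,i}\|_2\leq\tfrac12 e^{-2s_i}\leq\tfrac12$ gives $\|(\Id-K_{i,i})^{-1}\|_{\mathrm{op}}\leq 2$. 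To pass to the Hilbert--Schmidt norm as stated, write $(\Id-K_{i,i})^{-1}=\Id+(\Id-K_{i,i})^{-1}K_{i,i}$ and note $\|(\Id-K_{i,i})^{-1}K_{i,i}\|_2\leq \|(\Id-K_{i,i})^{-1}\|_{\mathrm{op}}\|K_{i,i}\|_2\leq 2\cdot\tfrac12 e^{-2s_i}\leq 1$ — but the identity is not Hilbert--Schmidt, so the clean statement must be read either as an operator-norm bound or, as is clearly intended downstream, one keeps the resolvent estimate in operator norm and only ever pairs it with a Hilbert--Schmidt factor via $\|AB\|_1\leq\|A\|_{\mathrm{op}}\|B\|_2$ (or the $\|A\|_2\|B\|_2$ inequality already quoted). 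I would state the resolvent bound in the operator norm, which is what \eqref{ub17a}--\eqref{ub17c} actually use, and the main (very minor) obstacle is simply tracking the numerical constants in the Airy decay estimate so that the stated clean constants $\tfrac12$, $1$, $2$ come out.
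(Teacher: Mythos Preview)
Your approach is essentially the same as the paper's: direct Hilbert--Schmidt computation using the pointwise Airy bound, then a Neumann-series estimate for the resolvent. Two remarks. First, your hedging about constants is unnecessary: the paper quotes the exact inequality $|\Ai(x)|\leq e^{-x}$ valid for \emph{all} $x\in\R$ (equation \eqref{eqAiExpBound}), so the constants $\tfrac12$ and $1$ drop out cleanly with no ``small-argument'' adjustment---you should cite that rather than the asymptotic \eqref{asy}. Second, your observation about the resolvent is spot-on: $(\Id-K_{i,i})^{-1}$ is not Hilbert--Schmidt, and the paper's own proof writes only the Neumann bound $\|(\Id-K_{i,i})^{-1}\|\leq(1-\|K_{i,i}\|_2)^{-1}\leq 2$ without specifying the norm; as you say, this must be read as an operator-norm bound, which is all that is used in \eqref{ub17a}--\eqref{ub17c}.
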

\begin{proof}
The bounds for $i=1$ and $i=2$ are fully analogous, thus we restrict to the case $i=1$ here. To bound $\|K_{1,1}\|_2^2$ we use \eqref{eqAiExpBound} which gives
	\begin{equation}
		\begin{aligned}
		\|K_{1,1}\|^2_2&=\int_{s_1}^\infty dx \int_{s_1}^\infty dy \big[\Ai(x+y)\big]^2 \leq \int_{s_1}^\infty dx \int_{s_1}^\infty dy e^{-2(x+y)} \leq \tfrac14 e^{-4 s_1}.
		\end{aligned}
	\end{equation}
Thus $\|K_{1,1}\|_2\leq \frac12 e^{-2 s_1}$. Similarly
\begin{equation}
\begin{aligned}
	\|\bar{K}_{1,1}\|^2_2&=\int_{s_1}^\infty dx \int_{s_1}^\infty dy \big[e^{\frac{y}{2}}\Ai(x+y)\big]^2\leq\int_{s_1}^\infty dx \int_{s_1}^\infty dy e^{-(x+2y)}=\tfrac12 e^{-3 s_1},
\end{aligned}
\end{equation}
which implies the second bound in \eqref{Ki}. Finally using
\begin{equation}
	\|\Id-K_{i,i}\|_2^{-1}\leq (1-\|K_{i,i}\|_2)^{-1},
\end{equation}
which holds whenever $\|K_{i,i}\|_2<1$, we get the last inequality.
\end{proof}
	
	\begin{lem}\label{lem1.7}
		For $s_1,s_2\geq 0$ and $u>0$
	\begin{equation}\label{K21}
		\begin{aligned}
	\|K_{2,1}\|_2\leq \frac{1}{2u^{3/2}}\exp\Big[-2(s_1+s_2)u-\tfrac43 u^3\Big].
	\end{aligned}
	\end{equation}
	\end{lem}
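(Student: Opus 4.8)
The plan is to start from the explicit integral representation of $K_{2,1}$ and bound its Hilbert--Schmidt norm directly. Recall from \eqref{eq1.24} that
\begin{equation}
K_{2,1}(x,y)=\Id_{x>s_2}\Id_{y>s_1}\Ai(x+y+u^2)e^{-(x+y)u-\tfrac23 u^3},
\end{equation}
so that
\begin{equation}
\|K_{2,1}\|_2^2=e^{-\tfrac43 u^3}\int_{s_2}^\infty dx\int_{s_1}^\infty dy\,\big[\Ai(x+y+u^2)\big]^2 e^{-2(x+y)u}.
\end{equation}
First I would use the standard super-exponential decay bound for the Airy function (the estimate \eqref{asy}/\eqref{eqAiExpBound} cited earlier, of the form $|\Ai(\zeta)|\le C e^{-\tfrac23\zeta^{3/2}}$ for $\zeta\ge 0$, or even just the crude $|\Ai(\zeta)|\le e^{-\zeta}$ once $\zeta$ is large). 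Applying this with $\zeta=x+y+u^2\ge u^2$ we can bound $\big[\Ai(x+y+u^2)\big]^2$ and then perform the two resulting elementary integrals in $x$ and $y$.

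The key step is to extract exactly the exponent $-2(s_1+s_2)u-\tfrac43 u^3$ and the polynomial prefactor $u^{-3/2}$, with constant $\tfrac12$, uniformly in $s_1,s_2\ge 0$. The cleanest route is to change variables as in \eqref{LT} with $a=s_2$, $b=s_1$: setting $\omega=(x-s_2)+(y-s_1)\ge 0$, the argument of the Airy function becomes $\omega+(s_1+s_2)+u^2$, and the exponential weight becomes $e^{-2(\omega+s_1+s_2)u}$. The $\zeta$-integral over $[-\omega,\omega]$ contributes a factor $2\omega$ (the Jacobian of \eqref{LT} is $\tfrac12$, so it gives exactly $\omega$), leaving
\begin{equation}
\|K_{2,1}\|_2^2 = e^{-\tfrac43 u^3}e^{-2(s_1+s_2)u}\int_0^\infty \omega\,\big[\Ai(\omega+s_1+s_2+u^2)\big]^2 e^{-2\omega u}\,d\omega.
\end{equation}
Now bound $\big[\Ai(\omega+s_1+s_2+u^2)\big]^2\le \big[\Ai(u^2)\big]^2\le C u^{-2} e^{-\tfrac43 u^3}$ using monotonicity of $\Ai$ on the positive axis together with its asymptotics, or alternatively keep the $\omega$ and $s_i$ dependence inside the exponent to get an even cleaner bound; then $\int_0^\infty \omega e^{-2\omega u}d\omega = (2u)^{-2}$. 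Combining these gives $\|K_{2,1}\|_2^2 \le C u^{-4}e^{-4(s_1+s_2)u}\cdot$(something), and taking square roots and tracking constants carefully yields the claimed $\tfrac12 u^{-3/2}e^{-2(s_1+s_2)u-\tfrac43 u^3}$. If the naive constant is slightly worse than $\tfrac12$, I would instead retain part of the Gaussian-type decay $e^{-\tfrac43(\omega+s_1+s_2+u^2)^{3/2}}$ from the Airy bound and expand $(\,\cdot\,)^{3/2}$ around $u^2$ to gain an extra $e^{-2(s_1+s_2)u}$ factor for free, which both sharpens the $s_i$-dependence and leaves enough room to absorb constants into the polynomial prefactor (noting $u$ is bounded away from $0$).

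The main obstacle is purely bookkeeping: getting the precise constant $\tfrac12$ and the precise power $u^{-3/2}$ rather than merely $O(u^{-3/2})$, since the lemma is stated with explicit constants. This requires being slightly careful about which form of the Airy asymptotic one invokes and whether one expands $(\omega+s_1+s_2+u^2)^{3/2}$ to first or second order in the small parameter $(\omega+s_1+s_2)/u^2$. There is no conceptual difficulty — no contour deformation or saddle-point analysis is needed here, unlike in Proposition~\ref{propTrace} — so the proof should be short once the change of variables \eqref{LT} is in place and the elementary integrals are evaluated.
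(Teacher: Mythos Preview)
Your approach is essentially the paper's: change variables via \eqref{LT}, reduce to a one-dimensional $\omega$-integral, bound the Airy factor, and evaluate the resulting elementary integral. A couple of points to clean up.

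First, your initial crude bound $[\Ai(\omega+s_1+s_2+u^2)]^2\le [\Ai(u^2)]^2$ drops the $(s_1+s_2)$-dependence of the Airy argument and therefore yields only $e^{-(s_1+s_2)u}$ after the square root, not the claimed $e^{-2(s_1+s_2)u}$. You notice this and propose the fix; in fact the paper goes directly to the sharper bound \eqref{eqAiBetterBound}, namely $[\Ai(\omega+s_1+s_2+u^2)]^2\le u^{-1}e^{-\frac43 u^3-2(\omega+s_1+s_2)u}$, which comes from the convexity inequality $(x+u^2)^{3/2}\ge u^3+\tfrac32 u x$. With this bound in place the remaining integral is $\int_0^\infty \omega\,e^{-4\omega u}\,d\omega=(4u)^{-2}$, and the constant and the power $u^{-3/2}$ fall out exactly --- no absorbing of constants is needed, and no expansion to second order either.

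Second, the lemma is stated for \emph{all} $u>0$, not for $u$ bounded away from $0$; your parenthetical remark at the end is misleading. The bound \eqref{eqAiBetterBound} is valid for every $u>0$, so nothing special happens at small $u$.

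Finally, your Jacobian computation (factor $\tfrac12$, hence the $\zeta$-integral contributes $\omega$) is correct; the paper records the Jacobian as $2$, which appears to be a typo, and with your value one actually obtains the slightly better constant $\tfrac14$ in place of $\tfrac12$.
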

\begin{proof}
	Using the change of variables \eqref{LT} with $a=s_2,b=s_1$, it follows that
\begin{equation}\label{eq4}
\begin{aligned}
		\|K_{2,1}\|^2_2&=\int_{s_2}^\infty dx\int_{s_1}^\infty dy\big[\Ai(x+y+u^2)\big]^2 e^{-2(x+y)u-\tfrac43 u^3}\\
		&=2\int_0^\infty d\omega \int_{-\omega}^\omega d\zeta \big[\Ai(\omega+s_1+s_2+u^2)\big]^2 e^{-2(\omega+s_1+s_2)u-\tfrac43 u^3},
\end{aligned}
\end{equation}
where we used that the Jacobian of the transformation in \eqref{LT} is 2.

Next using \eqref{eqAiBetterBound} (with $x=\omega+s_1+s_2$) we get
\begin{equation}\label{ub5}
\begin{aligned}
\eqref{eq4} &= 4e^{-2(s_1+s_2)u-\tfrac43 u^3}\int_0^\infty d\omega \big[\Ai(\omega+s_1+s_2+u^2)\big]^2\omega e^{-2\omega u}\\
&\leq \frac{4e^{-4(s_1+s_2)u-\tfrac83 u^3}}{u}\int_0^\infty d\omega\, \omega e^{-4\omega u}= \frac1{4 u^3}e^{-4(s_1+s_2)u-\tfrac83 u^3}.
\end{aligned}
\end{equation}
	\end{proof}

	\begin{lem}\label{lem1.8}
		For $s_1,s_2\geq 0$, there exists a constant $C_1>0$ such that
		\begin{equation}\label{Krl}
			\begin{aligned}
				\|\bar{K}^R_{1,2}\|_2,\|\bar{K}^L_{1,2}\|_2\leq \frac{C_1}{\sqrt{u}} e^{-s_1/2}
			\end{aligned}
		\end{equation}
for all $u\geq \sqrt{s_1+s_2}$.
	\end{lem}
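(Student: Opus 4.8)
\emph{Proof plan.} The content of the lemma is that $K_{1,2}$ does not decay along the diagonal $x=y$, so bounding its two terms separately only gives $O(u^{-1/4})$; the plan is to exploit their near‑cancellation. First I would write
\[
K_{1,2}(x,y)=\alpha(x+y)-\beta(x-y),\qquad \alpha(t):=\Ai(t+u^2)\,e^{tu+\frac23u^3},\quad \beta(r):=\frac{e^{-r^2/4u}}{\sqrt{4\pi u}},
\]
and decompose $K_{1,2}(x,y)=\bigl(\alpha(x+y)-\beta(x+y)\bigr)+\bigl(\beta(x+y)-\beta(x-y)\bigr)=:P(x,y)+Q(x,y)$. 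Both $\alpha(x+y)$ and $\beta(x-y)$ are symmetric under $x\leftrightarrow y$, so $\|\bar K^R_{1,2}\|_2$ with parameters $(s_1,s_2)$ equals $\|\bar K^L_{1,2}\|_2$ with $(s_2,s_1)$; since $e^{-s_j/2}\le1$ it is enough to bound $\|\bar K^L_{1,2}\|_2$, and there the weight $e^{-x/2}$ sits on the variable whose lower limit is $s_1$, so the $x$‑integration will return the factor $e^{-s_1}$.

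The crux is two pointwise estimates. For $Q$, the identity $\beta(x+y)-\beta(x-y)=\beta(x-y)(e^{-xy/u}-1)$ gives at once $|Q(x,y)|\le \tfrac{1}{\sqrt{4\pi u}}e^{-(x-y)^2/4u}\,(1\wedge\tfrac{xy}{u})$. For $P$ I would use the Airy asymptotics $\Ai(z)=\tfrac{e^{-\frac23z^{3/2}}}{2\sqrt\pi z^{1/4}}(1+O(z^{-3/2}))$ from Appendix~\ref{a:LPP} together with $\tfrac23(u^2+t)^{3/2}=\tfrac23u^3+ut+\tfrac{t^2}{4u}-\tfrac{t^3}{24u^3}+\cdots$, which yields $\alpha(t)/\beta(t)=(1+t/u^2)^{-1/4}e^{t^3/(24u^3)-\cdots}=1+O(t/u^2+t^3/u^3)$ for $0\le t\le u$, hence $|P(x,y)|\le \tfrac{C}{\sqrt u}e^{-t^2/4u}\tfrac{t}{u}$ with $t=x+y$; for $u\le t\le 4u^2$ the convexity bound $(u^2+t)^{3/2}\ge u^3+\tfrac32ut+ct^2/u$ gives the individual estimate $\alpha(t)\le\tfrac{C}{\sqrt u}e^{-ct^2/u}$ and hence again $|P|\le\tfrac{C}{\sqrt u}e^{-ct^2/u}\tfrac{t}{u}$ (now $t/u\ge1$), while for $t>4u^2$ the Airy term is $O(e^{-\frac23u^3}/\sqrt u)$, so that regime is negligible. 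A rigorous alternative avoiding uniform asymptotics is to start from \eqref{eq1.24}: the bracket there is $e^{u\xi^2+(x+y)\xi}(e^{-\xi^3/3}-e^{-2y\xi})$, which vanishes at $\xi=0$, so one may factor out $\xi$ and integrate by parts through $\xi e^{u\xi^2}=\tfrac1{2u}(e^{u\xi^2})'$ to produce the gain $\tfrac1u$, after which the remaining contour integral on the line $\Re\xi=-(x+y)/(2u)$ is bounded by a Gaussian.

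Finally I would insert these into $\|\bar K^L_{1,2}\|_2^2=\int_{s_1}^\infty\!\!\int_{s_2}^\infty e^{-x}|K_{1,2}|^2\,dxdy$ via $(P+Q)^2\le2P^2+2Q^2$. For $P$: with $\sigma=x+y$, $\int_{s_2}^\infty P^2\,dy\le\tfrac{C^2}{u^3}\int_0^\infty e^{-c\sigma^2/u}\sigma^2\,d\sigma=O(u^{-3/2})$ uniformly in $x$, and $\int_{s_1}^\infty e^{-x}dx$ supplies $e^{-s_1}$. For $Q$: $(1\wedge\tfrac{xy}{u})^2\le\tfrac{xy}{u}$ and $\int_{\R}\tfrac{e^{-(x-y)^2/2u}}{4\pi u}\,y\,dy=\tfrac{x}{\sqrt{8\pi u}}$ give $\int_{s_2}^\infty Q^2\,dy\le \tfrac{x^2}{\sqrt{8\pi}\,u^{3/2}}$, and $\int_{s_1}^\infty e^{-x}x^2\,dx=O(e^{-s_1}(1+s_1^2))$; both pieces are thus $O(u^{-3/2}\cdot\mathrm{poly}(s_1))\,e^{-s_1}$, well inside $\tfrac{C_1^2}{u}e^{-s_1}$ (in the extreme range where $s_1$ is comparable to $u^2$, $e^{-x}$ with $x\gtrsim u^2$ is already $e^{-u^2}$‑small and the crude bounds $|Q|\le\beta(x-y)$, $|P|\le\alpha+\beta$ suffice). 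The hard part is precisely the pointwise estimate for $P=\alpha-\beta$: one must control the error in the Airy asymptotics — equivalently the integration‑by‑parts remainder — \emph{uniformly} in the relevant range of $s_1,s_2$, and extract the extra factor $1\wedge\tfrac{t}{u}$ that upgrades the naive $u^{-1/4}$ to $u^{-1/2}$ (the true order being $u^{-3/4}$, there is some slack); the $Q$‑estimate and the concluding integrals are then routine Gaussian computations.
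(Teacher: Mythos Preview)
The paper's proof is much simpler than yours and does \emph{not} exploit any cancellation: it just applies $(A-B)^2\le2(A^2+B^2)$ and bounds the Airy and Gaussian pieces of $K_{1,2}$ separately. The Gaussian piece alone gives
\[
2\int_{s_1}^\infty\!\!\int_{s_2}^\infty e^{-x}\,\frac{e^{-(x-y)^2/(2u)}}{4\pi u}\,dy\,dx \;\le\; \frac{e^{-s_1}}{\sqrt{2\pi u}},
\]
and the Airy piece is handled by the substitution $\omega=(x-s_1)+(y-s_2)$ together with $|\Ai(z)|^2\le e^{-\frac43 z^{3/2}}$ and a linearization of the exponent. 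As you correctly diagnose, this separate-term argument only delivers $\|\bar K^{L}_{1,2}\|_2\le C u^{-1/4}e^{-s_1/2}$; the exponent $u^{-1/2}$ in the lemma appears to be a slip. The weaker $u^{-1/4}$ is all that is used downstream: it changes the prefactor in Proposition~\ref{PropR1} from $u^{-2}$ to $u^{-7/4}$, and the choice $s_i\sim 3\ln u$ in the proof of Theorem~\ref{ThmLowerBound} still makes $R_1$ subleading.

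Your $P+Q$ decomposition is the natural route to the sharper rate, and for $s_1,s_2=O(1)$ (the only regime the paper ever uses) it would indeed give $u^{-1/2}$, in fact $u^{-3/4}$. But the bound as \emph{stated}, uniformly over $u\ge\sqrt{s_1+s_2}$, is actually false, so no argument can close that gap: take $s_1=s_2=u$, then for $x\in[u,u+1]$ and $|y-x|\le\sqrt u$ one has $\alpha(x+y)\lesssim u^{-1/2}e^{-u}\ll\beta(x-y)\asymp u^{-1/2}$, hence $|K_{1,2}|\gtrsim u^{-1/2}$ and $\|\bar K^L_{1,2}\|_2^2\gtrsim e^{-u}u^{-1/2}$, contradicting $C^2u^{-1}e^{-u}$. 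This is exactly where your write-up does not close: the $Q$-integral produces $\int_{s_1}^\infty e^{-x}x^2\,dx=(s_1^2{+}2s_1{+}2)e^{-s_1}$, and your fallback to ``crude bounds'' for large $s_1$ returns only $O(u^{-1/2})e^{-s_1}$ on the squared norm, with no spare decay to trade. So your approach is the right one for the sharper estimate under an added hypothesis like $s_1,s_2\le c\sqrt u$, but for the lemma as literally stated the paper's crude argument (with exponent corrected to $u^{-1/4}$) is both what is written and what is true.
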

	\begin{proof}[Proof of Lemma~\ref{lem1.8}]
By symmetry in $x,y$ the bounds for $\|\bar{K}^L_{1,2}\|_2^2$ and for $\|\bar{K}^R_{1,2}\|_2^2$ are the same. So we consider $\|\bar{K}^L_{1,2}\|_2^2$ only. Using $(A-B)^2\leq 2 (A^2+B^2)$ we get
\begin{equation}\label{eq1.68}
\begin{aligned}
\|\bar{K}^L_{1,2}\|_2^2&=\int_{s_1}^\infty dx \int_{s_2}^\infty dy e^{-x}\Big[\Ai(x+y+u^2) e^{(x+y)u+\tfrac23 u^3}-\frac{e^{-(x-y)^2/4u}}{\sqrt{4\pi u}}\Big]^2\\
&\leq 2 \int_{s_1}^\infty dx \int_{s_2}^\infty dy e^{-x}\Big[(\Ai(x+y+u^2))^2 e^{2(x+y)u+\tfrac43 u^3}+\frac{e^{-(x-y)^2/2u}}{4\pi u}\Big].
\end{aligned}
\end{equation}
The second term in \eqref{eq1.68} can be bounded by integrating in $y$ over $\R$ and then integrating over $x\geq s_2$. This gives
\begin{equation}
 2 \int_{s_1}^\infty dx \int_{s_2}^\infty dy e^{-x} \frac{e^{-(x-y)^2/2u}}{4\pi u}\leq \frac{1}{\sqrt{2\pi u}} e^{-s_1}.
\end{equation}
For the first term in \eqref{eq1.68}, we use the change of variable \eqref{LT} with $a=s_1$, $b=s_2$ and obtain
\begin{equation}\label{ub9}
\begin{aligned}
&2 \int_{s_1}^\infty dx \int_{s_2}^\infty dy e^{-x}(\Ai(x+y+u^2))^2 e^{2(x+y)u+\tfrac43 u^3}\\
&=4 e^{-s_1} \int_0^\infty d\omega (\Ai(\omega+s_1+s_2+u^2))^2 e^{2 (\omega+s_1+s_2) u+\tfrac43 u^3}\!\int_{-\omega}^\omega \! d\zeta e^{-\frac12(\omega+\zeta)} \\
&\leq 8 e^{-s_1} \int_0^\infty d\omega (\Ai(\omega+s_1+s_2+u^2))^2 e^{2 (\omega+s_1+s_2) u+\tfrac43 u^3}\\
&\stackrel{\eqref{eqAiExpBound2}}{\leq} 8 e^{-s_1} \int_0^\infty d\omega e^{-\frac43 (\omega+s_1+s_2+u^2)^{3/2}+2 (\omega+s_1+s_2) u+\tfrac43 u^3}.
\end{aligned}
\end{equation}
Let $A(\omega)=-\frac43 (\omega+s_1+s_2+u^2)^{3/2}+2 (\omega+s_1+s_2) u+\tfrac43 u^3$. Then for all $s_1,s_2,u\geq 0$ it is a concave function in $\omega$ and thus
\begin{equation}
\begin{aligned}
A(\omega)&\leq A(0)+A'(0)\omega\\
&=2 (s_1+s_2) u + \frac43 u^3 - \frac43 (s_1+s_2 + u^2)^{3/2}-2(\sqrt{s_1+s_2+u^2}-u)\omega.
\end{aligned}
\end{equation}
The term independent of $\omega$ is always negative. Indeed, $x\mapsto \frac43(x+u^2)^{3/2}$ is convex and thus greater than its linear approximation at $x=0$, which is $\frac43 u^3+2ux$. So we have
\begin{equation}
\begin{aligned}
\eqref{ub9}\leq \frac{4 e^{-s_1} }{\sqrt{s_1+s_2+u^2}-u}\leq \frac{4 e^{-s_1}}{(\sqrt{2}-1)u},
\end{aligned}
\end{equation}
where in the last step we used the assumption that $u\geq \sqrt{s_1+s_2}$.
\end{proof}

	\begin{lem}\label{lem:K12K21}
There exists a constant $C_1>0$ such that for $s_1,s_2\geq 0$,
		\begin{equation}\label{ub16}
			\|K_{1,2}K_{2,1}\|_1\leq \frac{C_1}{2u^{2}} e^{-\frac43 u^3-2(s_1+s_2)u} e^{s_2}\leq \frac{C_1}{2u^{2}} e^{-\frac43 u^3}
		\end{equation}
for all $u\geq \max\{\frac12,\sqrt{s_1+s_2}\}$.
	\end{lem}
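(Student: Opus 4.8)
The plan is to estimate $\|K_{1,2}K_{2,1}\|_1$ by writing the operator as a composition of two Hilbert--Schmidt operators and applying $\|AB\|_1\le\|A\|_2\|B\|_2$. The only obstruction is that $K_{1,2}$ is itself not Hilbert--Schmidt, since its Gaussian part $\tfrac1{\sqrt{4\pi u}}e^{-(x-y)^2/4u}$ does not decay along the diagonal; this is exactly what the weighting already set up in the paper is for. Concretely, I would set $\tilde K_{2,1}(z,y):=e^{z/2}K_{2,1}(z,y)$ and note the operator identity
\[
K_{1,2}K_{2,1}=\bar K^R_{1,2}\,\tilde K_{2,1},
\]
because $\int \bar K^R_{1,2}(x,z)\tilde K_{2,1}(z,y)\,dz=\int K_{1,2}(x,z)e^{-z/2}e^{z/2}K_{2,1}(z,y)\,dz=(K_{1,2}K_{2,1})(x,y)$, the contracted variable $z$ ranging over $z>s_2$ (where both $\bar K^R_{1,2}(x,\cdot)$ and $\tilde K_{2,1}(\cdot,y)$ are supported). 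Both factors are Hilbert--Schmidt, so $\|K_{1,2}K_{2,1}\|_1\le\|\bar K^R_{1,2}\|_2\,\|\tilde K_{2,1}\|_2$, and it remains to estimate the two norms.

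The first norm is supplied directly by Lemma~\ref{lem1.8}: $\|\bar K^R_{1,2}\|_2\le C_1 u^{-1/2}e^{-s_1/2}$, valid for $u\ge\sqrt{s_1+s_2}$. For $\|\tilde K_{2,1}\|_2$ I would rerun the computation in the proof of Lemma~\ref{lem1.7} essentially verbatim, the only new ingredient being the extra factor $e^{x}$ (with $x>s_2$) in the integrand of $\|\tilde K_{2,1}\|_2^2=\int_{s_2}^\infty dx\int_{s_1}^\infty dy\,e^{x}[\Ai(x+y+u^2)]^2 e^{-2(x+y)u-\frac43u^3}$. After the change of variables \eqref{LT} with $a=s_2$, $b=s_1$ this factor becomes $e^{s_2}e^{(\omega+\zeta)/2}$, and the point is to bound the inner integral by $\int_{-\omega}^\omega e^{\zeta/2}\,d\zeta\le 2\omega e^{\omega/2}$, \emph{keeping} the polynomial factor $\omega$ rather than discarding it in favour of the cruder $2e^{\omega/2}$. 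Then inserting the pointwise Airy bound used in the proof of Lemma~\ref{lem1.7}, i.e.\ $[\Ai(\omega+s_1+s_2+u^2)]^2\le u^{-1}e^{-2(\omega+s_1+s_2)u-\frac43u^3}$ (cf.\ \eqref{eqAiBetterBound}), leaves $\int_0^\infty\omega\,e^{(1-4u)\omega}\,d\omega=(4u-1)^{-2}\le(2u)^{-2}$ for $u\ge\tfrac12$, which gives
\[
\|\tilde K_{2,1}\|_2\le u^{-3/2}\,e^{s_2/2}\,e^{-2(s_1+s_2)u-\frac43u^3},
\]
the same order $u^{-3/2}$ as $\|K_{2,1}\|_2$ in Lemma~\ref{lem1.7}, up to the harmless factor $e^{s_2/2}$.

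Combining the two estimates yields
\[
\|K_{1,2}K_{2,1}\|_1\le \frac{C_1}{u^{2}}\,e^{(s_2-s_1)/2}\,e^{-\frac43u^3-2(s_1+s_2)u}\le \frac{C_1}{u^{2}}\,e^{s_2}\,e^{-\frac43u^3-2(s_1+s_2)u},
\]
where the last step uses $s_1\ge 0$; relabelling the constant gives the first inequality in \eqref{ub16}. The second inequality there follows because for $u\ge\tfrac12$ one has $s_2-2(s_1+s_2)u=s_2(1-2u)-2s_1u\le 0$, so the prefactor $e^{s_2-2(s_1+s_2)u}$ is at most $1$. I expect the only genuinely delicate point to be the bound on $\|\tilde K_{2,1}\|_2$: one must check that introducing the weight $e^{z/2}$ to repair the non-Hilbert--Schmidt nature of $K_{1,2}$ costs no power of $u$. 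It does not, precisely because $u\ge\tfrac12$ forces $e^{z/2-zu}\le1$ so the $z$-decay of $K_{2,1}$ is preserved, and because retaining the factor $\omega$ in the $\zeta$-integral supplies the extra $u^{-2}$ that, together with $\|\bar K^R_{1,2}\|_2\sim u^{-1/2}$, produces the claimed $u^{-2}$ rather than a weaker $u^{-3/2}$.
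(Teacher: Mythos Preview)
Your proposal is correct and follows essentially the same approach as the paper: the same factorization $K_{1,2}K_{2,1}=\bar K^R_{1,2}\,\bar K_{2,1}$ (your $\tilde K_{2,1}$ is the paper's $\bar K_{2,1}$), the same use of Lemma~\ref{lem1.8} for the first factor, and the same Airy bound \eqref{eqAiBetterBound} for the second. The only cosmetic difference is that the paper integrates $\int_{s_2}^\infty dx\int_{s_1}^\infty dy\, e^{x}e^{-4(x+y)u}$ directly in $(x,y)$ to obtain $\tfrac{1}{4u(4u-1)}$, whereas you pass through the $(\omega,\zeta)$ change of variables and the estimate $\int_{-\omega}^\omega e^{\zeta/2}d\zeta\le 2\omega e^{\omega/2}$ to reach $(4u-1)^{-2}$; both give the required $u^{-3/2}$ for $\|\bar K_{2,1}\|_2$.
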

\begin{proof}
Denote
	\begin{equation}
		\bar{K}_{2,1}(x,y)=e^{\frac{x}2}K_{2,1}(x,y).
	\end{equation}
Then we have the bound
	\begin{equation}\label{ub15}
		\begin{aligned}
		\|K_{1,2}K_{2,1}\|_1\leq \|\bar{K}_{1,2}^{R}\|_2\|\bar{K}_{2,1}\|_2.
	\end{aligned}
	\end{equation}
From Lemma~\ref{lem1.8}, we get $\|\bar{K}_{1,2}^{R}\|_2\leq\frac{C_1}{u^{1/2}}$. Furthermore, using \eqref{eqAiBetterBound}, we have
\begin{equation}
\begin{aligned}
\|\bar{K}_{2,1}\|_2^2&=\int_{s_2}^\infty dx\int_{s_1}^\infty dy\,e^{x}\big[\Ai(x+y+u^2)\big]^2 e^{-2(x+y)u-\tfrac43 u^3}\\
&\leq \frac{1}{u} \int_{s_2}^\infty dx\int_{s_1}^\infty dy\,e^{x} e^{-4(x+y)u-\tfrac83 u^3}\leq e^{-\frac83 u^3} e^{-4(s_1+s_2)u+s_2}\frac{1}{4u^2(4u-1)}\\
&\leq e^{-\frac83 u^3} e^{-4(s_1+s_2)u+s_2}\frac{1}{8u^3}
\end{aligned}
\end{equation}
for all $u\geq 1/2$.
\end{proof}

We are now ready to bound $\|Q\|_1$.
\begin{lem}\label{lem:Qb}
Uniformly for $s_1,s_2\geq 0$, there exists a constant $C_1$ such that
		\begin{equation}
\|Q\|_1\leq \frac{2 C_1 e^{-\min\{s_1,s_2\}}}{u^2}e^{-2(s_1+s_2)u-\tfrac43 u^3}
		\end{equation}
for all $u\geq \max\{\frac12,\sqrt{s_1+s_2}\}$.
	\end{lem}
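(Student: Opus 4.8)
The plan is to read the bound off directly from the decomposition \eqref{id2} of $Q$ into three operator products, together with the trace-norm estimates \eqref{ub17a}, \eqref{ub17b}, \eqref{ub17c} already established for those products. Since $\|Q\|_1$ does not exceed the sum of the trace norms of the three terms in \eqref{id2}, it suffices to bound each of the three right-hand sides of \eqref{ub17a}--\eqref{ub17c} by substituting the scalar Hilbert--Schmidt bounds of Lemmas~\ref{lem1.6}, \ref{lem1.7} and~\ref{lem1.8}.

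Carrying this out: for $(\Id-K_{1,1})^{-1}K_{1,1}K_{1,2}K_{2,1}$, inequality \eqref{ub17a} combined with $\|(\Id-K_{1,1})^{-1}\|_2\le 2$, $\|\bar K_{1,1}\|_2\le e^{-s_1}$, $\|\bar K_{1,2}^{L}\|_2\le C_1 u^{-1/2}e^{-s_1/2}$ and $\|K_{2,1}\|_2\le \tfrac12 u^{-3/2}e^{-2(s_1+s_2)u-\frac43 u^3}$ gives a bound $\le \frac{C_1}{u^2}e^{-\frac32 s_1}\,e^{-2(s_1+s_2)u-\frac43 u^3}$. The same three lemmas applied to \eqref{ub17b} (now using $\|\bar K_{2,2}\|_2\le e^{-s_2}$ and $\|(\Id-K_{2,2})^{-1}\|_2\le 2$) give $\le \frac{C_1}{u^2}e^{-\frac12 s_1-s_2}\,e^{-2(s_1+s_2)u-\frac43 u^3}$, and applied to \eqref{ub17c} (which additionally brings in $\|K_{2,2}\|_2\le \tfrac12 e^{-2s_2}$) give $\le \frac{C_1}{u^2}e^{-\frac32 s_1-2s_2}\,e^{-2(s_1+s_2)u-\frac43 u^3}$. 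Now I use $s_1,s_2\ge 0$ to replace each prefactor by the common envelope $e^{-\min\{s_1,s_2\}}$: indeed $\tfrac32 s_1\ge\min\{s_1,s_2\}$ (if $s_1\le s_2$ then $\tfrac32 s_1\ge s_1$; if $s_1>s_2$ then $\tfrac32 s_1>s_2$), while $\tfrac12 s_1+s_2\ge s_2\ge\min\{s_1,s_2\}$, and likewise for the third prefactor. Summing the three contributions gives $\|Q\|_1\le \frac{3C_1}{u^2}e^{-\min\{s_1,s_2\}}e^{-2(s_1+s_2)u-\frac43 u^3}$, which is of the asserted form once the constant is relabelled (the numerical factor $3$ versus $2$ being immaterial). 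The hypotheses $u\ge\frac12$ and $u\ge\sqrt{s_1+s_2}$ are exactly those required to invoke Lemmas~\ref{lem1.6}--\ref{lem1.8}.

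There is essentially no remaining obstacle: the genuine analytic content --- extracting the super-exponential factor $e^{-\frac43 u^3}$ and the $e^{-2(s_1+s_2)u}$ decay from the Airy asymptotics, and in particular controlling the weighted Hilbert--Schmidt norms $\|\bar K_{1,2}^{L}\|_2$ and $\|\bar K_{1,2}^{R}\|_2$ even though $K_{1,2}(x,y)$ fails to decay along the diagonal $x=y\to\infty$ (handled in Lemma~\ref{lem1.8} via the concavity of $A(\omega)$, which is precisely where $u\ge\sqrt{s_1+s_2}$ enters) --- has already been packaged into the preceding lemmas. What is left for the present lemma is the bookkeeping of multiplying five or six scalar bounds per term and checking that all three products share the same envelope; the only point deserving a line of care is the elementary inequality comparing the $s_i$-prefactors with $e^{-\min\{s_1,s_2\}}$ recorded above.
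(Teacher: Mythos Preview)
Your proof is correct and follows essentially the same approach as the paper's: both use the decomposition \eqref{id2}, the trace-norm inequalities \eqref{ub17a}--\eqref{ub17c}, and then substitute the Hilbert--Schmidt bounds of Lemmas~\ref{lem1.6}--\ref{lem1.8}. The paper organizes the arithmetic slightly differently (first collecting the common factor $\|K_{2,1}\|_2$ and writing $\|Q\|_1\le \frac{8C_1}{u^{1/2}}e^{-\min\{s_1,s_2\}}\|K_{2,1}\|_2$ before inserting Lemma~\ref{lem1.7}), and arrives at $4C_1/u^2$ rather than your $3C_1/u^2$, but these are immaterial; one minor remark is that the hypothesis $u\ge\tfrac12$ is not actually needed for Lemmas~\ref{lem1.6}--\ref{lem1.8} themselves (it enters only in Lemma~\ref{lem:K12K21}), so your closing sentence slightly overstates its role.
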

\begin{proof}
Applying the bounds of Lemmas~\ref{lem1.6} and~\ref{lem1.8} to \eqref{ub17a}-\eqref{ub17c}, we obtain
\begin{equation}
\|Q\|_1\leq \frac{8 C_1}{u^{1/2}} e^{-\min\{s_1,s_2\}} \|K_{2,1}\|_2
\end{equation}
for all $s_1,s_2\geq 0$ and $u$ as mentioned.
Then, applying the bound of Lemma~\ref{lem1.7} leads to
\begin{equation}
\|Q\|_1\leq \frac{4C_1 e^{-\min\{s_1,s_2\}}}{u^2}e^{-2(s_1+s_2)u-\tfrac43 u^3}.
\end{equation}
\end{proof}

\appendix

\section{Some tail estimates}\label{AppTails}
Here we collect and, in some cases, prove estimates on tails of the Airy function and tails of some LPPs (point-to-point and point-to-half line) that we have used earlier.

\subsection{Bounds on Airy functions}
\label{a:airy}
The Airy function $\Ai$ satisfied the following identity (see e.g.~Appendix~A of~\cite{BFP09})
\begin{equation}\label{ir3}
\frac{1}{2\pi\I}\int_{\gamma}dz \exp\Big(\frac{z^3}3+b z^2-z x\Big)= \Ai(b^2+x)e^{\frac23b^3+b x}
\end{equation}
for any $\gamma=\alpha+\I\R$ with $\alpha>-b$. The asymptotics of $\Ai$ is given by~\cite{AS84}
\begin{equation}\label{asy}
\Ai(x)=\frac{x^{-1/4}}{\sqrt{\pi}} \exp\Big(-\frac23 x^{3/2}\Big)[1+O(x^{-1})] \quad x\rightarrow \infty
\end{equation}
Furthermore, one has the following simple bounds\footnote{$\sup_{x\in\R}|\Ai(x)|\leq c=0.7857\ldots$ follows by $\lim_{n\to\infty} n^{1/3} J_{[2n+u n^{1/3} u]}(2n)=\Ai(u)$ (see also (3.2.23) of~\cite{AS84}) and the bound of Landau~\cite{Lan00}. For any $x\geq 0.01$, the bound \eqref{eqAiExpBound2}, is better that the bound in \eqref{eqAiExpBound} and $e^{-0.01}>c$.}
\begin{equation}\label{eqAiExpBound}
|\Ai(x)|\leq e^{-x},\quad x\in \R
\end{equation}
and, see Equation~9.7.15 of~\cite{NIST:DLMF},
\begin{equation}\label{eqAiExpBound2}
|\Ai(x)|\leq \frac{1}{2\sqrt{\pi}x^{1/4}}e^{-\frac23 x^{3/2}},\quad x\geq 0.
\end{equation}
from which it follows that for all $x,u\geq 0$,
\begin{equation}\label{eqAiBetterBound}
|\Ai(x+u^2)|\leq \frac{1}{\sqrt{u}} e^{-\frac23(x+u^2)^{3/2}}\leq \frac{1}{\sqrt{u}} e^{-\frac23 u^3- x u},
\end{equation}
using that $\frac23(x+u^2)^{3/2}\geq \frac23 u^3+ux$ since $y\mapsto y^{3/2}$ is convex on the positive real line.

\subsection{Bounds on LPP}
\label{a:LPP}
\begin{lem}[Theorem~2 of~\cite{LR10}]\label{lemUB1}
There exist constants $C,c>0$ such that
\begin{equation}\label{eqUB1}
\Pb(L_{(0,0),(N+v(2N)^{2/3},N-v(2N)^{2/3})}\leq 4N-2^{4/3}v^2N^{1/3}-x 2^{4/3} N^{1/3})\leq C e^{-c x^3},
\end{equation}
for all $N^{2/3}\gg x>0$ and $N^{1/3}\gg |v|$.
\end{lem}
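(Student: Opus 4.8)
The plan is to recognize the statement as the moderate-deviation \emph{lower} tail for point-to-point exponential LPP and to obtain it by reduction to the Laguerre unitary ensemble, essentially reproducing Theorem~2 of~\cite{LR10}. The first step is Johansson's correspondence: for i.i.d.\ $\mathrm{Exp}(1)$ weights the passage time $L_{(0,0),(a,b)}$ has the same law as the largest eigenvalue $\lambda_{\max}$ of a complex Wishart matrix of dimensions $a+1$ and $b+1$ (equivalently, of a Laguerre unitary ensemble). With $a=N+v(2N)^{2/3}$ and $b=N-v(2N)^{2/3}$ the two dimensions differ only at the subleading scale $|v|(2N)^{2/3}\ll N$, so one is looking at the soft right edge of a nearly square Wishart matrix, whose Marchenko--Pastur right edge is $(\sqrt{a+1}+\sqrt{b+1})^2=4N-2^{4/3}v^2N^{1/3}+\mathcal{O}(N^{1/3})$ with fluctuations of order $2^{4/3}N^{1/3}$ --- matching exactly the centering and scaling in the statement.

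The second step is to bound $\Pb(\lambda_{\max}\le t)$ with $t=4N-2^{4/3}v^2N^{1/3}-x\,2^{4/3}N^{1/3}$. This event confines all $\approx N$ eigenvalues below a level lying $\Theta(xN^{1/3})$ below the spectral edge, and the key point is that, because of the $\beta=2$ logarithmic repulsion, this costs an exponential factor with a \emph{cubic} exponent in $x$, in contrast to the $e^{-cx^{3/2}}$ decay of the upper tail. (The crude determinantal estimate $\Pb(\lambda_{\max}\le t)=\det(\Id-K)_{L^2(t,\infty)}\le e^{-\Tr K_{L^2(t,\infty)}}$ with $\Tr K_{L^2(t,\infty)}=\E[\#\{\text{eigenvalues}>t\}]\asymp x^{3/2}$ only reproduces the weaker rate and does not see the repulsion.) Two routes reach the $e^{-cx^3}$ bound: a Coulomb-gas one, writing $\Pb(\lambda_{\max}\le t)$ as a ratio $Z_n(t)/Z_n(\infty)$ of Selberg-type partition functions and comparing the constrained equilibrium energy (with a hard wall at $t$) to the unconstrained one, the excess being of order $x^3$ on the moderate-deviation scale; and the variational/tridiagonal one of~\cite{LR10}, which I would push through as it is the most robust in $N$ --- pass to the bidiagonal model $BB^{*}$ of the Laguerre ensemble, fix a deterministic unit vector $\psi$ localized near the top of the spectrum, use $\Pb(\lambda_{\max}\le t)\le\Pb(\langle\psi,BB^{*}\psi\rangle\le t)$, observe that $\langle\psi,BB^{*}\psi\rangle$ is a weighted sum of independent $\chi^2$ variables, and apply the Cram\'er bound $\Pb(\sum_k c_k\chi^2_{d_k}\le t)\le\inf_{\theta>0}e^{\theta t}\prod_k\E[e^{-\theta c_k\chi^2_{d_k}}]$, optimizing over $\theta$ and over $\psi$.

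The main obstacle is \emph{uniformity} in $N$: all estimates must hold throughout the window $N^{2/3}\gg x>0$ and $N^{1/3}\gg|v|$, not merely as an $N\to\infty$ asymptotic at a fixed deviation. Concretely one has to track the error terms in the equilibrium-energy comparison (respectively in the $\chi^2$ Chernoff exponent) and the $v$-dependence of the edge, and verify that the cubic contribution dominates uniformly over this range. A convenient simplification is that the statement only asks for \emph{some} $c>0$, not the sharp constant, so one may replace the exact variational optimum by any lower bound on the exponent that scales like $x^3$; this turns the remaining work into routine book-keeping. This is precisely what is carried out in Theorem~2 of~\cite{LR10}, to which we refer for the complete argument.
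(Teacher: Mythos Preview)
The paper gives no proof of this lemma: it is quoted directly as Theorem~2 of~\cite{LR10} and used as a black box. Your proposal correctly identifies the statement as the moderate-deviation lower tail for point-to-point exponential LPP, correctly invokes Johansson's LPP--Wishart correspondence and the resulting edge calculation $(\sqrt{a}+\sqrt{b})^2=4N-2^{4/3}v^2N^{1/3}+o(N^{1/3})$, and accurately describes the Ledoux--Rider strategy (bidiagonal Laguerre model, variational test vector, Chernoff bound on a weighted $\chi^2$ sum) together with the reason a naive determinantal bound only yields the weaker $e^{-cx^{3/2}}$ rate. In short, your sketch is a faithful outline of the argument in~\cite{LR10}, which is exactly what the paper defers to; there is nothing further to compare.
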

Using Riemann-Hilbert methods like as in the case of geometric and Poissonian LPP, see~\cite{BR99b,BR01b,Bai02,BBdF08} for the Riemann-Hilbert problem for exponential LPP, it should be possible to get the optimal constant $c$ as well (expected to be $\frac1{12}$ like for the lower tail of the GUE Tracy-Widom distribution function), but we do not require the optimal constant in the lower tail estimate for our purposes. A simple corollary is the following.

The following corollary follows from the inequality $L_{(0,0),\LL_{2N}}\geq L_{(0,0),(N,N)}$.
\begin{cor}\label{CorUB2}
There exist constants $C,c>0$ such that
\begin{equation}\label{eqUB1Bis}
\Pb(L_{(0,0),\LL_{2N}}\leq 4N-x 2^{4/3} N^{1/3})\leq C e^{-c x^3},
\end{equation}
for all $N^{2/3}\gg x>0$.
\end{cor}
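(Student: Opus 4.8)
The plan is to reduce the point-to-line lower tail directly to the point-to-point lower tail already recorded in Lemma~\ref{lemUB1}, exploiting that the diagonal endpoint $(N,N)$ lies on the anti-diagonal line $\LL_{2N}$.

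First I would observe that, since $(N,N)\in\LL_{2N}$, every up-right path from $(0,0)$ to $(N,N)$ is in particular an up-right path from $(0,0)$ to $\LL_{2N}$. The point-to-line passage time $L_{(0,0),\LL_{2N}}$ is the maximum of $\sum_{(i,j)\in\pi}\omega_{i,j}$ over a family of paths that strictly contains the family defining $L_{(0,0),(N,N)}$, so the maximum can only be larger; hence $L_{(0,0),\LL_{2N}}\geq L_{(0,0),(N,N)}$ holds deterministically, i.e.\ on every realization of the weights $(\omega_{i,j})$.

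Consequently the event $\{L_{(0,0),\LL_{2N}}\leq 4N-x\,2^{4/3}N^{1/3}\}$ is contained in the event $\{L_{(0,0),(N,N)}\leq 4N-x\,2^{4/3}N^{1/3}\}$, and therefore has no larger probability. I would then invoke Lemma~\ref{lemUB1} with $v=0$ (so that the condition $N^{1/3}\gg|v|$ is trivially met, and $4N-2^{4/3}v^2N^{1/3}=4N$): for $N^{2/3}\gg x>0$ this bounds the probability of the latter event by $Ce^{-cx^3}$ with the very same constants $C,c$, which gives the claimed estimate.

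There is essentially no obstacle here; the only point that needs care is that the comparison $L_{(0,0),\LL_{2N}}\geq L_{(0,0),(N,N)}$ is a pointwise (deterministic) inequality rather than merely a stochastic one, which is immediate from the definition of $L$ as a maximum over a path set that only grows when the target is enlarged from the point $(N,N)$ to the line $\LL_{2N}$ through it. Monotonicity of probability under set inclusion then finishes the argument.
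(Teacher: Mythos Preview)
Your proof is correct and is exactly the argument the paper gives: the corollary is stated to follow directly from the deterministic inequality $L_{(0,0),\LL_{2N}}\geq L_{(0,0),(N,N)}$, which you justify and then combine with Lemma~\ref{lemUB1} at $v=0$.
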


Next we would like to have a sharp upper tail moderate deviation estimate from the last passage time from $(0,0)$ to the half line $\{(N,N)+v(2N)^{2/3}(1,-1):v\ge u\}$.

\begin{lem}\label{lemUB2}
Let ${\cal D}_u=\cup_{v\geq u} J(v)$ with $J(v)=(N,N)+v(2N)^{2/3}(1,-1)$. Then, for $N^{1/9}\gg u>0$ and $N^{2/9}\gg s>0$,
\begin{equation}\label{eqUB2}
\Pb(L_{(0,0),{\cal D}_u}\geq 4N- u^2 2^{4/3} N^{1/3}+ s 2^{4/3} N^{1/3})\leq \frac{C e^{-\frac43 s^{3/2}}}{s \min\{\sqrt{s},u\}}
\end{equation}
for some constant $C$.
\end{lem}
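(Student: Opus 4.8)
The plan is to use the identity $L_{(0,0),{\cal D}_u}=\sup_{v\ge u}L_{(0,0),J(v)}$ and reduce the supremum over the half-line of endpoints, by a union bound over $\Or(1)$-scale pieces of the $v$-axis, to the sharp one-point moderate deviation estimate for the point-to-point passage time --- the one with the optimal exponent $\tfrac43$ \emph{and} the Tracy--Widom polynomial prefactor $r^{-3/2}$, which one gets by steepest descent on the Laguerre/Fredholm representation of the law of $L_{(0,0),(N,N)}$ (this is the ``asymptotic analysis'' referred to in the introduction, of the same flavour as Lemma~\ref{lemUB2B}). The only extra input is crude control of the oscillation of $v\mapsto L_{(0,0),J(v)}$ over a short $v$-interval, which comes either from a rescaled point-to-(short)-segment estimate (as Proposition~\ref{PropUB5} is used in the proof of Lemma~\ref{lemUB3B}) or from the local Brownian behaviour of the passage-time profile. (Alternatively, one could start from a Fredholm Pfaffian/determinant formula for $\Pb(L_{(0,0),{\cal D}_u}\le t)$ afforded by the half-flat exactly solvable structure and run a saddle-point analysis uniform in $u,s$; I describe the more self-contained route.)

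Concretely, split $[u,\infty)$ into $I_k=[u+k,u+k+1]$, $k\ge0$, the pieces with $u+k\gtrsim N^{1/3}$ being negligible (there $J(v)$ lies near the axes and exceeding the target is a large-deviation, not moderate-deviation, event; cf.\ Lemma~\ref{lemUB1}). Since $\E L_{(0,0),J(v)}=4N-2^{4/3}v^2N^{1/3}+\Or(N^{1/3})$ and $v^2$ is increasing, on $I_k$ the target level corresponds to a deviation above the mean of order $r_k:=s-u^2+(u+k)^2=s+2uk+k^2$. For $k\ge1$ one bounds $\sup_{v\in I_k}L_{(0,0),J(v)}$ by reducing to the value at the endpoint $v=u+k$ up to an $o(1)\cdot2^{4/3}N^{1/3}$ correction with error $\ll e^{-\tfrac43 s^{3/2}}$, then applies the one-point bound, and sums: with $r_k^{3/2}\ge s^{3/2}+\tfrac32 s^{1/2}(2uk+k^2)$, the $k\ge1$ contribution is $\Or(e^{-\tfrac43 s^{3/2}}\sum_{k\ge1}e^{-2s^{1/2}k})=\Or(e^{-\tfrac43 s^{3/2}-2\sqrt s})$, which is negligible. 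Hence the whole estimate is governed by $I_0$, i.e.\ by $\sup_{v\in[u,u+1]}L_{(0,0),J(v)}$: there the required deviation is $r\approx s+2u(v-u)$, so the near-optimal endpoints live in a $v$-window of width $\sim(u\sqrt s)^{-1}$, and a careful continuous-supremum estimate over that window (sub-discretising and summing the one-point bounds) yields $\le C\max\{s^{-3/2},(su)^{-1}\}e^{-\tfrac43 s^{3/2}}=C e^{-\tfrac43 s^{3/2}}/(s\min\{\sqrt s,u\})$, which is exactly \eqref{eqUB2}. The window $s\ll N^{2/9}$, $u\ll N^{1/9}$ is precisely what keeps every rescaled one-point input inside its moderate-deviation validity range.

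The hard part is this continuous supremum over the near-diagonal window. One genuinely needs the point-to-point upper tail with the correct constant $\tfrac43$ \emph{and} the $r^{-3/2}$ prefactor --- a mere $Ce^{-\tfrac43 r^{3/2}}$ would already be too lossy by a factor $\sim s^{3/2}$ --- and $[u,u+1]$ must be sub-discretised on a $v$-scale fine enough (of order $s^{-2}$) that the profile oscillation over each sub-interval shifts the deviation by a lower-order amount with superpolynomially small failure probability; the resulting sum over sub-intervals is then a geometric-type series in which the deviation increases by $\sim 2u$ per unit of $v$, so the ratio is $e^{-\Theta(u\sqrt s)}$ and the sum equals $\Theta(\min\{1,(u\sqrt s)^{-1}\})$ --- which is what turns the one-point prefactor $s^{-3/2}$ into $\max\{s^{-3/2},(su)^{-1}\}$ and produces the $\min\{\sqrt s,u\}$ in the denominator of \eqref{eqUB2}. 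Verifying that the $k\ge1$ pieces, the endpoint reductions, and the oscillation errors are all strictly lower order, and in particular keeping exact track of the polynomial prefactors so that $s^{-3/2}$ (and not $s^{-1/2}$ or $s^{1}$) survives, is the bulk and the delicate part of the argument.
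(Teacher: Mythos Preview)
Your approach is genuinely different from the paper's. The paper does \emph{not} reduce to one-point estimates via a union bound; instead it uses the LPP--TASEP correspondence to rewrite $\Pb(L_{(0,0),{\cal D}_u}\ge t)=\Pb(x_m(t)\le 2u(2N)^{2/3})$ for half-flat TASEP, invokes the explicit Fredholm determinant for the latter, and runs a steepest-descent analysis directly on the double contour integral kernel. The two poles $1/(w-v)$ and $1/(1+w+v)$ in that kernel produce exactly the two factors $1/\sqrt{s}$ and $1/u$ whose product with the $1/\sqrt{s}$ from the contour length gives the $1/(s\min\{\sqrt{s},u\})$ prefactor in one stroke. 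This is precisely the ``alternative'' you mention and dismiss as less self-contained.

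Your route is plausible in outline but is not actually more elementary, and the delicate step you flag at the end is a real gap as written. First, the one-point upper-tail bound with the sharp polynomial prefactor $r^{-3/2}$ is itself obtained by a Fredholm/steepest-descent computation of the same nature as the paper's, so nothing is saved by avoiding the half-flat formula. Second, and more seriously, your ``endpoint reduction'' on each sub-interval requires controlling the oscillation of $v\mapsto L_{(0,0),J(v)}$ over a window of $v$-length $\delta$ with failure probability $\ll e^{-\tfrac43 s^{3/2}}$, \emph{without} losing any polynomial factor in $s$. Invoking Proposition~\ref{PropUB5} here is circular in spirit (its proof already loses a factor $s$), and a naive sup over $\delta^{-1}$ grid points multiplies the one-point bound by $\delta^{-1}$, which must then be cancelled exactly by the geometric sum --- this works only if the oscillation error is genuinely of strictly lower order in the exponent, which you assert but do not establish. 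In short, your sketch identifies the correct mechanism (the geometric decay with ratio $e^{-\Theta(u\sqrt{s})}$ is what produces $\min\{\sqrt{s},u\}$), but turning it into a proof requires either a sharp segment-to-line bound you do not have, or the very Fredholm analysis the paper performs directly.
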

\begin{proof}
Let $\tilde \LL=\{(x,y)\in\Z^2\,|\, x+y=0, y\geq 0\}$. Then for any $a$,
\begin{equation}
\Pb(L_{(0,0),{\cal D}_u}\geq a)=\Pb(L_{\tilde \LL,J(u)}\geq a).
\end{equation}

Next we use the well-known connection between TASEP and LPP (see e.g.~Equation (1.15) of~\cite{BFP09}, which generalizes~\cite{Jo00b}). In our case, the connection with TASEP with half-flat initial condition, i.e., $x_k(0)=-2k$ for $k\geq 0$ are the positions of TASEP particles at time $0$. This gives
\begin{equation}\label{eq1.10}
\begin{aligned}
\Pb(L_{\tilde \LL,J(u)}\geq t)&=\Pb(x_{m}(t)\leq 2u (2N)^{2/3})\\
&=-\sum_{n\geq 1}\frac{(-1)^n}{n!}\sum_{x_1,\ldots,x_n<2u(2N)^{2/3}} \det[K_{m,t}(x_i,x_j)]_{1\leq i,j\leq n},
\end{aligned}
\end{equation}
with $m=N-u (2N)^{2/3}$ and the sum is on $x_1,\ldots,x_n$ being integers below $2u(2N)^{2/3}$; the kernel is given by~\cite{BFS07}
\begin{equation}
K_{m,t}(x,y)=\frac{1}{(2\pi\I)^2}\oint_{\Sigma_0}\frac{dv}{v+1}\oint_{\Sigma_{-1,-1-v}} \hspace{-0.5cm} dw \frac{e^{\Phi(v;m,y)}}{e^{\Phi(w;m,x)}} \left(\frac{1}{w-v}-\frac{1}{1+w+v}\right),
\end{equation}
with $\Phi(v;m,y)=-v t+(y+m)\ln(1+v)-m\ln(v)$.
The contours are simple paths anticlockwise oriented, with $\Sigma_0$ enclosing only the pole at $0$, while $\Sigma_{-1,-1-v}$ enclosing the poles at $-1$ and $-1-v$.
Consider the scaling
\begin{equation}
\begin{aligned}
&x_i=2u(2N)^{2/3}-\xi_i 2^{4/3}N^{1/3},\quad m=N-u(2N)^{2/3},\\
&t=4N-u^2 2^{4/3}N^{1/3}+s 2^{4/3}N^{1/3},\quad K_N(\xi_i,\xi_j)=2^{4/3} N^{1/3} K_{m,t}(x_i,x_j).
\end{aligned}
\end{equation}
Then, with $\delta=2^{-4/3}N^{-1/3}$, we have
\begin{equation}\label{eq1.16}
|\eqref{eq1.10}|\leq \sum_{n\geq 1}\frac{1}{n!} \sum_{\xi_1,\ldots,\xi_n\in \delta \N} \delta^n \left|\det[K_N(\xi_i,\xi_j)]_{1\leq i,j\leq n}\right|.
\end{equation}
The reason why we singled out the $\delta^n$ is because one can think that when $\delta$ is small, the Riemann sum $\sum_{\xi_1,\ldots,\xi_n\in \delta \N} \delta^n f(\xi_1,\ldots,\xi_n)$ is close to the integral $\int_{\R_+^n} d\xi_1\ldots d\xi_n f(\xi_1,\ldots,\xi_n)$. Actually, from the exponential bound we get for $f(\xi_1,\ldots,\xi_n)$ we can bound the estimate with the integral (starting from $-\delta$ instead of $0$ to be precise).

Now we give the choice of the paths for $v$ and $w$ (which are such that they are steep descent). Since it is quite standard, we just indicate some main steps (see e.g.~\cite{BFS07} or Lemma~2.7 of~\cite{CFS16} for a case very similar to the one in this paper). Choose the paths as
\begin{equation}
\begin{aligned}
v&=-(\tfrac12-\e_1)e^{\I \phi},\,\phi\in [-\pi,\pi),\quad \e_1=2^{-4/3}N^{-1/3} u (1+\sqrt{s} u^{-1}),\\
w&=-1+(\tfrac12+\e_2) e^{\I\theta},\,\theta\in [-\pi,\pi),\quad \e_2=2^{-4/3}N^{-1/3} u (1-\sqrt{s} u^{-1}).
\end{aligned}
\end{equation}
On the chosen paths, we have, for $u\ll N^{1/3}$ and $s\ll N^{2/3}$
\begin{equation}
\begin{aligned}
\Re(\Phi(v;m,x_j))&\leq \Phi(-1/2+\e_1;m;x_j) -(1-\cos(\phi)) 2 \sqrt{s}N^{2/3},\\
\Re(-\Phi(w;m,x_i))&\leq -\Phi(-1/2+\e_2;m;x_i) -(1-\cos(\theta)) 2 \sqrt{s}N^{2/3}.
\end{aligned}
\end{equation}
Furthermore
\begin{equation}
\left|\frac{1}{v+1}\right|\leq 2,\quad
\left|\frac{1}{w-v}\right|\leq \frac{2^{1/3}N^{1/3}}{\sqrt{s}},\quad \left|\frac{1}{1+w+v}\right|\leq \frac{2^{2/3}N^{1/3}}{u},
\end{equation}
as the maximal value of the first expression is obtained for the values \mbox{$\theta=\phi=0$}, i.e., when $v$ and $w$ are closest to each other, while $1+w$ and $v$ are two circles around $0$ with radius difference $\e_1-\e_2$.
Putting all together and noticing that $\int_{-\pi}^\pi d\theta e^{-c_1 (1-\cos(\theta))}\leq C/\sqrt{c_1}$ we finally obtain
\begin{equation}\label{eq1.20}
|K_N(\xi_i,\xi_j)|\leq \frac{C}{\sqrt{s}\min\{\sqrt{s},u\}} e^{\Phi(-1/2+\e_1;m;x_j)-\Phi(-1/2+\e_2;m;x_i)}.
\end{equation}
Taylor expansion gives
\begin{equation}\label{eq1.21B}
\begin{aligned}
\Phi(-\tfrac12+\e_1;m;x_j)&-\Phi(-\tfrac12+\e_2;m;x_i) \\
&= g(\xi_j)-g(\xi_i) -\frac43 s^{3/2}(1+E_2)-2\sqrt{s}(\xi_i+\xi_j)(1+E_1)\\
&\leq g(\xi_j)-g(\xi_i) -\frac43 s^{3/2}-\sqrt{s}(\xi_i+\xi_j)+1
\end{aligned}
\end{equation}
for all $N$ large enough and for some explicit conjugation terms $g(\xi)$ (which cancel out exactly when computing the determinant), where $E_1=\Or(s N^{-2/3};u N^{-1/3})=o(1)$ and $s^{3/2}E_2=\Or(s^{5/2} N^{-2/3}; s^{1/2}u^4 N^{-2/3})=o(1)$ do not depend on $\xi_i,\xi_j$.

Inserting \eqref{eq1.21B} and \eqref{eq1.20} in \eqref{eq1.16},using the Hadamard bound and interchanging the order of summation, we get
\begin{equation}
\begin{aligned}
\eqref{eq1.16}&\leq \sum_{n\geq 1}\frac{n^{n/2}}{n!} \frac{C^n e^n}{(\sqrt{s}\min\{\sqrt{s},u\})^n} e^{-\frac{4n}{3} s^{3/2}}\prod_{j=1}^n \sum_{\xi_j\in \delta \N} \delta
 e^{-2\sqrt{s}\xi_j}\\
 &\leq \sum_{n\geq 1}\frac{n^{n/2}}{n!}\left(\frac{C'}{\sqrt{s}\min\{\sqrt{s},u\}} e^{-\frac43 s^{3/2}} \frac{1}{2\sqrt{s}}\right)^n \leq \frac{C''}{s \min\{\sqrt{s},u\}} e^{-\frac43 s^{3/2}},
\end{aligned}
\end{equation}
for some constants $C',C''$. To bound the sum over $\xi_i$, we simply used $\sum_{\xi\in\delta\N}\delta e^{-\alpha \xi}\leq \int_0^\infty e^{-\alpha \xi} d\xi=\alpha^{-1}$, but one could also compute the geometric sum explicitly.
\end{proof}

The next result gives upper tail bounds for point-to-line LPP.

\begin{lem}\label{lemUB2B}
For $N^{2/9}\gg s\geq 0$,
\begin{equation}\label{eqUB2B}
\Pb(L_{(0,0),\LL_{2N}}\geq 4N+ s 2^{4/3} N^{1/3})\leq \frac{C e^{-\frac43 s^{3/2}}}{\max\{1,s^{3/4}\}}
\end{equation}
for some constant $C$.
\end{lem}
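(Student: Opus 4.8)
The plan is to follow the same strategy as the proof of Lemma~\ref{lemUB2}, with the half-flat TASEP replaced by the fully flat (periodic) one. We may assume $s$ is larger than any fixed constant, since for $s$ bounded the estimate is trivial: $\Pb(\cdot)\le1$ while the right-hand side is bounded below by a positive constant (depending on the bound on $s$) that can be absorbed into $C$. The first step is to invoke the standard correspondence between point-to-line exponential LPP and TASEP started from the periodic configuration $x_k(0)=-2k$, $k\in\Z$ --- the same correspondence that underlies the convergence~\eqref{eqUB5}; see~\cite{Sas05,BFPS06,BFP06}. This rewrites $\Pb(L_{(0,0),\LL_{2N}}\ge4N+s2^{4/3}N^{1/3})$ as the probability that a suitable tagged particle of flat TASEP lies weakly to the left of a prescribed site, which by Sasamoto's shift-antisymmetrization admits a Fredholm determinant representation $\det(\Id-\bar K_{m,t})$ on $L^2$ of a half-line, with $\bar K_{m,t}$ a double contour integral whose phase $\Phi$ is of the same type as in Lemma~\ref{lemUB2} (the flat case carries one extra single integration from the antisymmetrization, but it only contributes a bounded factor). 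Exactly as in~\eqref{eq1.16}, after the $2/3$-$1/3$ rescaling $t=4N+s2^{4/3}N^{1/3}$, $x_i=\cdots-\xi_i2^{4/3}N^{1/3}$, one bounds $\Pb(\cdot)$ by $\sum_{n\ge1}\frac1{n!}\sum_{\xi_1,\dots,\xi_n\in\delta\N}\delta^n\,|\det[\bar K_N(\xi_i,\xi_j)]_{i,j=1}^n|$ with $\delta=2^{-4/3}N^{-1/3}$.

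The heart of the proof is the steep-descent analysis of $\bar K_N$. Unlike in Lemma~\ref{lemUB2}, the relevant critical point of $\Phi$ is now \emph{degenerate} (a double critical point), reflecting that we are at the macroscopic maximizer of the flat limit shape; this is what produces the Airy$_1$/GOE behaviour and, in the tail, both the leading factor $e^{-\frac43 s^{3/2}}$ and the extra polynomial gain --- the GOE upper tail decays like $s^{-3/4}e^{-\frac43 s^{3/2}}$, as opposed to the GUE-type $s^{-3/2}e^{-\frac43 s^{3/2}}$ of Lemma~\ref{lemUB2}. I would deform the $v$- and $w$-contours to steep-descent paths through this point, parametrised at the Airy scale $N^{1/3}$ with a cubic local expansion of $\Phi$, tracking carefully how the shift $s$ enters. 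The outcome should be a pointwise bound of the form
\begin{equation*}
|\bar K_N(\xi_i,\xi_j)|\le\frac{C}{\max\{1,s^{3/4}\}}\,e^{g(\xi_j)-g(\xi_i)}\,e^{-\frac43 s^{3/2}}\,e^{-c\sqrt{s}\,(\xi_i+\xi_j)},
\end{equation*}
valid for $s\ll N^{2/9}$, with conjugation factors $g$ that cancel in the determinant; the restriction $s\ll N^{2/9}$ is precisely what makes the Taylor errors in $\Phi$ negligible, as in Lemma~\ref{lemUB2}. Feeding this into the Fredholm series, using Hadamard's inequality and $\sum_{\xi\in\delta\N}\delta\,e^{-c\sqrt{s}\,\xi}\le C/\sqrt{s}$ to carry out the $\xi_j$-sums, the $n$-sum converges and is dominated by its $n=1$ term, giving $\Pb(\cdot)\le C\,s^{-3/4}e^{-\frac43 s^{3/2}}$ for $s\ge1$, which together with the small-$s$ case is the claim.

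The main obstacle is precisely this steep-descent estimate at the coalescing saddle: choosing contours that are globally steep descent for the flat phase $\Phi$, controlling the extra single integral from the antisymmetrization uniformly, and --- most delicately --- bookkeeping the $s$-dependence so that it contributes exactly the factors $e^{-\frac43 s^{3/2}}$, $e^{-c\sqrt{s}(\xi_i+\xi_j)}$ and the correct power in the prefactor. Everything else --- the small-$s$ reduction, the Fredholm expansion, Hadamard's inequality, and the resummation over $n$ --- is routine and parallels the proof of Lemma~\ref{lemUB2}.
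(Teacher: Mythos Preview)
Your overall architecture --- TASEP correspondence, Fredholm expansion, steep descent on the kernel, Hadamard plus resummation --- is exactly what the paper does. However, there is a structural misconception that would derail the execution: the flat-TASEP kernel from~\cite{BFPS06} is a \emph{single} contour integral,
\[
K_{N,t}(x,y)=\frac{1}{2\pi\I}\oint_{\Sigma_0}\frac{dv}{v}\,\frac{(1+v)^{y+2N}}{(-v)^{2N+x}}\,e^{-t(1+2v)},
\]
not a double one; the antisymmetrization has already been carried out in deriving this formula and there is no residual ``extra single integration'' to control. This makes the analysis considerably simpler than you anticipate, and trying to mimic the two-contour structure of Lemma~\ref{lemUB2} here would send you looking for the wrong object.

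Two further corrections. First, your prefactor bookkeeping is off by one step: the pointwise kernel bound carries $s^{-1/4}$ (from integrating one steep-descent contour whose effective curvature is of order $\sqrt{s}\,N^{2/3}$), not $s^{-3/4}$; the additional $s^{-1/2}$ comes from the $\xi$-summation, so that the $n=1$ term yields $s^{-1/4}\cdot s^{-1/2}=s^{-3/4}$. With your stated kernel prefactor the $n=1$ term would already be $s^{-5/4}$, which is inconsistent with your own conclusion. Second, the paper does not work at the Airy scale with a cubic local expansion. It simply takes the circle $v=-\rho e^{\I\theta}$ with $\rho=\tfrac12-\sqrt{s}\,2^{-4/3}N^{-1/3}$, i.e.\ a displacement of order $\sqrt{s}\,N^{-1/3}$ from the double critical point $v=-\tfrac12$; this effectively resolves the degenerate saddle into a non-degenerate one, and a direct expansion of the phase at $\theta=0$ delivers both $-\tfrac43 s^{3/2}$ and $-\sqrt{s}(\xi_i+\xi_j)$ without any cubic analysis. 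The condition $s\ll N^{2/9}$ is precisely what keeps the Taylor errors $\Or(sN^{-2/3})$ small.
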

\begin{proof}We need only to prove the result for $s\geq 1$ (or any other constant). Then by appropriate choice of the constant $C$, the result holds for all $s\geq 0$ (just take $C$ such that the upper bound is larger than the trivial bound $1$).

We have
\begin{equation}
\Pb(L_{(0,0),\LL_{2N}}\geq t) = \Pb(L_{\LL_{0},(N,N)}\geq t) = \Pb(x_N(t)\leq 0),
\end{equation}
where $x_n(t)$ it the position of TASEP particle $n$ at time $t$ with initial condition $x_n(0)=-2n$, $n\in \Z$. The distribution of TASEP particles are given in terms of a Fredholm determinant
\begin{equation}\label{eqA16}
\Pb(x_N(t)\leq 0) = -\sum_{n\geq 1} \frac{(-1)^n}{n!} \sum_{x_1,\ldots,x_n\leq 0} \det[K_{N,t}(x_i,x_j)]_{1\leq i,j\leq n},
\end{equation}
with the kernel given by~\cite{BFPS06}
\begin{equation}
K_{N,t}(x,y)=\frac{1}{2\pi\I}\oint_{\Sigma_0} \frac{dv}{v} \frac{(1+v)^{y+2N}}{(-v)^{2N+x}} e^{-t(1+2v)}
\end{equation}
with $\Sigma_0$ a simple path anticlockwise oriented enclosing only the pole at $0$.
Setting $x_i=-\xi_i 2^{4/3}N^{1/3}$ and $t=4N+s 2^{4/3}N^{1/3}$ we obtain
\begin{equation}
K_{N,t}(x_i,x_j)=\frac{1}{2\pi\I}\oint_{\Sigma_0} \frac{dv}{v} e^{N f_0(v)+2^{4/3}N^{1/3} f_1(v)}
\end{equation}
with
\begin{equation}
\begin{aligned}
f_0(v)&=-4(1+2v)+2 \ln(1+v)-2 \ln(-v),\\
f_1(v)&=-s(1+2v)-\xi_i \ln(1+v)+\xi_j \ln(-v).
\end{aligned}
\end{equation}
Consider the path parameterized by $v=-\rho e^{\I \theta}$, $\theta\in [-\pi,\pi)$. Then due to
\begin{equation}
\frac{d\Re(f_0(v))}{d\theta}=-2\rho\sin(\theta)\left[4-\frac{1}{|1+v|^2}\right],
\end{equation}
the path is steep descent for any $\rho\in (0,1/2]$. Let us choose the radius by
\begin{equation}
\rho=1/2-\sqrt{s}2^{-4/3}N^{-1/3}.
\end{equation}
For any $s,\xi_i,\xi_j\geq 0$,
\begin{equation}
\Re(2^{4/3} N^{1/3} f_1(v))\leq 2^{4/3}N^{1/3} f_1(-\rho)
\end{equation}
and for $0<s\ll N^{2/3}$,
\begin{equation}
\Re(N f_0(v))\leq N f_0(-\rho)-(1-\cos(\theta)) 4 \sqrt{s} N^{2/3}.
\end{equation}
Finally, integrating over $\theta$ leads to the following estimate on the kernel
\begin{equation}
2^{4/3}N^{1/3} |K_{N,t}(x_i,x_j)|\leq C e^{N f_0(-\rho)+2^{4/3}N^{1/3} f_1(-\rho)}.
\end{equation}
A computation gives
\begin{equation}
\begin{aligned}
&N f_0(-\rho)+2^{4/3}N^{1/3} f_1(-\rho)\\
&= g(\xi_i)-g(\xi_j)-\frac{4}{3} s^{3/2}(1+\Or(s N^{-2/3}))-2\sqrt{s}(\xi_i+\xi_j)(1+\Or(s N^{-2/3}))\\
&\leq g(\xi_i)-g(\xi_j)-\frac{4}{3} s^{3/2}-\sqrt{s}(\xi_i+\xi_j)+1
\end{aligned}
\end{equation}
for some conjugation function $g$, where the error terms do not depend on $\xi_i,\xi_j$. Thus we have the following estimate on the kernel
\begin{equation}
2^{4/3}N^{1/3} |K_{N,t}(x_i,x_j)|\leq \frac{C}{s^{1/4}} e^{g(\xi_i)-g(\xi_j)-\frac{4}{3} s^{3/2}-(\xi_i+\xi_j)\sqrt{s}}
\end{equation}
for some constant $C$.

Plugging this estimate into \eqref{eqA16} and using Hadamard inequality like in the proof of Lemma~\ref{lemUB2} we finally get the claimed bound \eqref{eqUB2B}.
\end{proof}

The final result we need is an upper tail bound for interval-to-line LPP. The proof is similar to that of Lemma 10.6 in \cite{BSS14}, but with the optimal exponent.

\begin{prop}\label{PropUB5} Let ${\cal M}=\{(k,-k)|-\tfrac12(2N)^{2/3}\leq k\leq \tfrac12(2N)^{2/3}\}$ and $\LL_{2N}=\{(N+k,N-k),k\in\Z\}$. Then, for $N^{2/3}\gg s\geq 0$,
\begin{equation}
\Pb\bigg(\sup_{p\in \cal M}L_{p,\LL_{2N}}\geq 4N+s 2^{4/3}N^{1/3}\bigg)\leq C \max\{1,s\} e^{-\frac{4}{3} s^{3/2}},
\end{equation}
for some constant $C>0$.
\end{prop}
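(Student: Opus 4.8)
Plan. We may assume $s\ge 1$, since for $0\le s<1$ the asserted inequality is trivial once $C$ is enlarged. Write $\sup_{p\in{\cal M}}L_{p,\LL_{2N}}=\sup_{q\in\LL_{2N}}L_{{\cal M},q}$ with $L_{{\cal M},q}:=\sup_{p\in{\cal M}}L_{p,q}$, and partition $\LL_{2N}$ into anti-diagonal cells ${\cal J}_v$ of transversal length $(2N)^{2/3}$ centred at $(N,N)+v(2N)^{2/3}(1,-1)$, $v\in\Z$, so that by a union bound it suffices to control $\Pb(L_{{\cal M},{\cal J}_v}\ge 4N+s2^{4/3}N^{1/3})$ for each $v$ and to check that the sum over $v$ converges at the right rate. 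For $|v|$ larger than a fixed multiple of $\sqrt s$ I would use that, since the source ${\cal M}$ sits at transversal position $O(1)$, any path from ${\cal M}$ to ${\cal J}_v$ has transversal displacement $\ge |v|-O(1)$ over $2N$ steps; dominating ${\cal M}$ by a half-line and invoking an interval-to-half-line analogue of Lemma~\ref{lemUB2} (which follows by the same argument, or by combining Lemma~\ref{lemUB2} with the localization estimate Lemma~\ref{lemUB3} and the ordering of geodesics) gives $\Pb(L_{{\cal M},{\cal J}_v}\ge 4N+s2^{4/3}N^{1/3})\le C\,\mathrm{poly}(s+v^2)\,e^{-\frac43(s+cv^2)^{3/2}}$. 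Since $(s+cv^2)^{3/2}\ge s^{3/2}+2c\sqrt s\,v^2$, the sum $\sum_{v}e^{-\frac43(s+cv^2)^{3/2}}\le Ce^{-\frac43 s^{3/2}}$, so the far cells are within budget and the problem reduces to the ``core'' regime $|v|\lesssim\sqrt s$.

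In the core regime ${\cal J}_v$ is itself an $O(1)$-scale interval, so $L_{{\cal M},{\cal J}_v}$ is an interval-to-interval passage time with essentially no displacement gain to exploit, and the estimate must be of the same quality as the point-to-line bound of Lemma~\ref{lemUB2B} but with a starting interval in place of a point. The route I would follow here is the multiscale (chaining) argument in the spirit of~\cite{BSS14}: decompose $\LL_{2N}$ and the intermediate lines dyadically, attribute to a hypothetical path of weight $4N+s2^{4/3}N^{1/3}$ a fluctuation budget split across scales, at each scale bound the relevant cell-to-cell passage times using the sharp one-point inputs Lemma~\ref{lemUB1}, Lemma~\ref{lemUB2}, Lemma~\ref{lemUB2B} together with the transversal-localization inputs Lemma~\ref{lemUB3}, Lemma~\ref{lemUB5} (combined exactly as in the proofs of Lemmas~\ref{lemUB3}--\ref{ThmUB6Bis}), and then optimise: putting essentially the whole budget $s$ at the coarsest scale, where there is a single relevant cell and the cost is of order $e^{-\frac43 s^{3/2}}$, while the finer scales contribute only a convergent product of $(1+o(1))$-factors. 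Summing over the $O(\sqrt s)$ core cells and absorbing the resulting polynomial-in-$s$ prefactors then yields $C\max\{1,s\}e^{-\frac43 s^{3/2}}$.

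The main obstacle — and the reason the statement is not immediate from Lemmas~\ref{lemUB2},\ref{lemUB2B} — is that the source ${\cal M}$ has rescaled width $1$, i.e.\ lattice width $(2N)^{2/3}\gg N^{1/3}$: it cannot be dominated by a single point, since extending it backwards far enough to produce a dominating corner shifts the mean passage time by $\Theta(N^{2/3})$, which dwarfs the $N^{1/3}$ fluctuation scale; and it cannot be handled by a crude union bound over $\Theta(N^{1/3})$ sub-intervals of lattice width $N^{1/3}$, since that loses a fatal factor $N^{1/3}$. The supremum over ${\cal M}$ must therefore be controlled through the correlation structure of the weight field, and the delicate point is to do so while preserving the \emph{exact} leading exponent $\tfrac43 s^{3/2}$, allowing not even a lower-order slippage of the form $e^{O(s^{1/2})}$ in the prefactor. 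This is precisely why the optimal-constant one-point estimate Lemma~\ref{lemUB2} (rather than the cruder bound of~\cite{LR10}, which does not carry the sharp upper-tail constant) is needed as input, and why the chaining of~\cite{BSS14} has to be rerun with these sharp ingredients.
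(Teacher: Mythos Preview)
Your diagnosis of the obstacle is accurate, but your dismissal of the backward-extension idea is too hasty, and this causes you to miss the short argument the paper actually uses. You consider only two extreme scales: extending back far enough that a single point geometrically dominates all of ${\cal M}$ (mean shift $\Theta(N^{2/3})$, which as you say is fatal), or a crude union bound over $\Theta(N^{1/3})$ pieces (fatal $N^{1/3}$ prefactor). The paper's proof works at an intermediate, $s$-dependent scale: put $w=(-\tilde\e N,-\tilde\e N)$ with $\tilde\e=s^{-3/2}$, and split ${\cal M}$ into $\tilde\e^{-2/3}=s$ sub-segments each of width $(2\tilde\e N)^{2/3}$, i.e.\ of the \emph{natural} transversal scale seen from $w$. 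By translation invariance and a union bound it suffices to handle one such sub-segment. Now use the superadditivity
\[
L_{w,\LL_{2N}}\;\ge\;\inf_{|v|\le\frac12\tilde\e^{2/3}}\hat L_{w,I(v)}\;+\;\sup_{|v|\le\frac12\tilde\e^{2/3}}L_{I(v),\LL_{2N}},
\]
where the two terms are independent (disjoint randomness). By tightness of the rescaled point-to-interval profile (\cite{FO17,Pim17}), the event ${\cal H}=\{\inf_{|v|}\hat L_{w,I(v)}\ge 4\tilde\e N-C'2^{4/3}(\tilde\e N)^{1/3}\}$ has probability $\ge\tfrac12$; hence $\Pb({\cal G})\le 2\,\Pb\!\big(L_{w,\LL_{2N}}\ge 4(1{+}\tilde\e)N+(s-C'\tilde\e^{1/3})2^{4/3}N^{1/3}\big)$, and Lemma~\ref{lemUB2B} gives the bound $Ce^{-\frac43(s-C'\tilde\e^{1/3})^{3/2}/(1+\tilde\e)^{1/2}}$. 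With $\tilde\e=s^{-3/2}$ one has $\tilde\e^{1/3}=s^{-1/2}$ and $(1+\tilde\e)^{1/2}=1+O(s^{-3/2})$, so the exponent is $\tfrac43 s^{3/2}+O(1)$ and the total prefactor is the union-bound factor $\tilde\e^{-2/3}=s$, exactly the $C\max\{1,s\}$ in the statement.

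So the step you flagged as impossible (``extending backwards\ldots shifts the mean by $\Theta(N^{2/3})$'') is in fact the whole proof, once one chooses the backward distance to depend on $s$ rather than on $N$. Your endpoint-partition plus chaining route may also be workable, but as written the core-regime step is only a pointer to \cite{BSS14} and does not explain how the dyadic budget-splitting preserves the exact constant $\tfrac43$; by contrast the paper's argument needs only Lemma~\ref{lemUB2B} and a one-line tightness input, with no chaining at all.
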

\begin{proof}
Notice that it is enough to prove the bound only for $s\geq 1$ (or any other positive constant).

Recall the notation $I(v)=v(2N)^{2/3}(1,-1)$. So ${\cal M}$ is the union of points $I(v)$ with $|v|\leq \tfrac12$. Define the point $w$
\begin{equation}
w=(-\tilde\e N,-\tilde\e N).
\end{equation}

Let us divide $\cal M$ into union of $\tilde\e^{-2/3}$ segments of size $\tilde\e^{2/3}(2N)^{2/3}$, say ${\cal M}=\bigcup_{k=1}^{\tilde\e^{-2/3}} {\cal M}_k$. Then
\begin{equation}
\sup_{p\in {\cal M}}L_{p,\LL_{2N}}=\sup_{1\leq k\leq \tilde\e^{-2/3}} \sup_{p\in {\cal M}_k} L_{p,\LL_{2N}}.
\end{equation}
Using union bound the the fact that each of the $\sup_{p\in {\cal M}_k} L_{p,\LL_{2N}}$ has the same distribution, we get
\begin{equation}\label{eqA31}
\Pb\bigg(\sup_{p\in \cal M}L_{p,\LL_{2N}}\geq S\bigg)\leq \tilde\e^{-2/3} \Pb\bigg(\sup_{|v|\leq \frac12 \tilde\e^{2/3}}L_{I(v),\LL_{2N}}\geq S\bigg).
\end{equation}
Next we want to bound the last probability in \eqref{eqA31}.

The idea is the following. For fixed $v$, we know that the same tail estimates we want to prove hold for $L_{I(v),\LL_{2N}}$. We also have an estimate of the upper tail for $L_{w,\LL_{2N}}$ and that with positive probability, $L_{w,I(v)}$ can not be too small in the $N^{1/3}$ scale. So, the fluctuations coming from maximizing $L_{I(v),\LL_{2N}}$ over $|v|\leq \tfrac12(1-\tilde\e)^{2/3}$ can not be compensated fully from the fluctuations of $L_{w,I(v)}$. This will imply our claim.

Let $\hat L_{w,I(u)}=L_{w,I(u)}-\omega_{I(u)}$ be the LPP from $w$ to $I(u)$ without the random variable at the end-point $I(u)$~\footnote{Removing the end-point does not influence the asymptotics and bounds, but it has the property that $\hat L_{w,I(u)}$ and $L_{I(u),\LL_{2N}}$ are independent random variables and the concatenation property holds true.}.
We know from~\cite{FO17,Pim17} that
\begin{equation}
v\mapsto \frac{\hat L_{w,I(v \tilde\e^{2/3})}-4\tilde\e N}{2^{4/3}(\tilde\e N)^{1/3}}
\end{equation}
is tight in the space of continuous functions on compact sets. As a consequence there exists a constant $C'>0$ such that the event
\begin{equation}
{\cal H}=\Big\{\inf_{|v|\leq \frac12 \tilde\e^{2/3}} \hat L_{w,I(v)}\geq 4\tilde\e N- C' 2^{4/3}(\tilde\e N)^{1/3}\Big\}
\end{equation}
satisfies $\Pb({\cal H})\geq 1/2$.

We also have
\begin{equation}
L_{w,\LL_{2N}}\geq \sup_{|v|\leq \frac12 \tilde\e^{2/3}}(\hat L_{w,I(v)}+L_{I(v),\LL_{2N}})\geq \inf_{|v|\leq \frac12 \tilde\e^{2/3}} \hat L_{w,I(v)}+\sup_{|v|\leq \frac12 \tilde\e^{2/3}} L_{I(v),\LL_{2N}}.
\end{equation}
Define the event
\begin{equation}
{\cal G}=\Big\{\sup_{|v|\leq \frac12 \tilde\e^{2/3}}L_{I(v),\LL_{2N}}\geq 4N+s 2^{4/3}N^{1/3}\Big\}.
\end{equation}
Then,
\begin{equation}
\Pb(L_{w,\LL_{2N}}\geq 4(1+\tilde\e)N+(s-C' \tilde\e^{1/3}) N^{1/3})\geq \Pb({\cal H}\cap {\cal G})=\Pb({\cal H})\Pb({\cal G}),
\end{equation}
where we used independence of $\cal H$ and $\cal G$.
Thus we have shown that
\begin{equation}
\Pb({\cal G})\leq 2 \Pb(L_{w,\LL_{2N}}\geq 4(1+\tilde\e)N+(s-C'\tilde\e^{1/3}) N^{1/3}) \leq 2 C e^{-\frac43\frac{(s-C'\tilde\e^{1/3})^{3/2}}{(1+\tilde\e)^{1/2}}}
\end{equation}
where the last inequality holds for $N^{2/3}\gg s-C'\tilde\e^{1/3}>1$ by Lemma~\ref{lemUB2B}. Replacing this into \eqref{eqA31} we get
\begin{equation}
\Pb\bigg(\sup_{p\in \cal M}L_{p,\LL_{2N}}\geq 4N+s 2^{4/3}N^{1/3}\bigg)\leq 2 C \tilde\e^{-2/3} e^{-\frac43\frac{(s-C'\tilde\e^{1/3})^{3/2}}{(1+\tilde\e)^{1/2}}}.
\end{equation}
Consider first $s\geq 1$. Then taking $\tilde\e=1/s^{3/2}$ (notice that $\tilde\e N \gg 1$ since we assumed $s\ll N^{2/3}$), we get
\begin{equation}
\Pb\bigg(\sup_{p\in \cal M}L_{p,\LL_{2N}}\geq 4N+s 2^{4/3}N^{1/3}\bigg) \leq C s e^{-\frac43 s^{3/2}}
\end{equation}
for some new constant $C>0$.
\end{proof}


\begin{thebibliography}{10}

\bibitem{AS84}
M.~Abramowitz and I.A. Stegun.
\newblock {\em Pocketbook of Mathematical Functions}.
\newblock Verlag Harri Deutsch, Thun-Frankfurt am Main, 1984.

\bibitem{aVm03}
M.~Adler and P.~van Moerbeke.
\newblock {PDE}'s for the joint distribution of the {D}yson, {A}iry and {S}ine
  processes.
\newblock {\em Ann. Probab.}, 33:1326--1361, 2005.

\bibitem{AGH18}
R.~Arratia, S.~Garibaldi, and A.W. Hales.
\newblock The van den {B}erg--{K}esten--{R}eimer operator and inequality for
  infinite spaces.
\newblock {\em Bernoulli}, 24(1):433--448, 2018.

\bibitem{Bai02}
J.~Baik.
\newblock {Painlev\'e expressions for LOE, LSE and interpolating ensembles}.
\newblock {\em Int. Math. Res. Notices}, 2002:1739--1789, 2002.

\bibitem{BBdF08}
J.~Baik, R.~Buckingham, and J.~DiFranco.
\newblock {Asymptotics of Tracy-Widom distributions and the total integral of a
  Painleve II function}.
\newblock {\em Comm. Math. Phys.}, 280:463--497, 2008.

\bibitem{BFP09}
J.~Baik, P.L. Ferrari, and S.~P{\'e}ch{\'e}.
\newblock {Limit process of stationary TASEP near the characteristic line}.
\newblock {\em Comm. Pure Appl. Math.}, 63:1017--1070, 2010.

\bibitem{BKS12}
J.~Baik, K.~Liechty, and G.~Schehr.
\newblock {On the joint distribution of the maximum and its position of the
  Airy$_2$ process minus a parabola}.
\newblock {\em J. Math. Phys.}, 53:083303, 2012.

\bibitem{BR01b}
J.~Baik and E.M. Rains.
\newblock Algebraic aspects of increasing subsequences.
\newblock {\em Duke Math. J.}, 109:1--65, 2001.

\bibitem{BR99b}
J.~Baik and E.M. Rains.
\newblock The asymptotics of monotone subsequences of involutions.
\newblock {\em Duke Math. J.}, 109:205--281, 2001.

\bibitem{BKLD22}
G.~Barraquand, A.~Krajenbrink, and P.~Le~Doussal.
\newblock {Half-space stationary Kardar-Parisi-Zhang equation beyond the
  Brownian case}.
\newblock {\em arXiv:2202.10487}, 2022.

\bibitem{BG18}
R.~Basu and S.~Ganguly.
\newblock Time correlation exponents in last passage percolation.
\newblock In M.E. Vares, R.~Fern\'andez, L.R. Fontes, and C.M. Newman, editors,
  {\em In and Out of Equilibrium 3: Celebrating Vladas Sidoravicius}, volume~77
  of {\em Progress in Probability}. Birkh{\"a}user, 2021.

\bibitem{BGZ19}
R.~Basu, S.~Ganguly, and L.~Zhang.
\newblock Temporal correlation in last passage percolation with flat initial
  condition via brownian comparison.
\newblock {\em Communications in Mathematical Physics}, 383(3):1805--1888,
  2021.

\bibitem{BSS19}
R.~Basu, S.~Sarkar, and A.~Sly.
\newblock Coalescence of geodesics in exactly solvable models of last passage
  percolation.
\newblock {\em Journal of Mathematical Physics}, 60(9):093301, 2019.

\bibitem{BSS14}
R.~Basu, V.~Sidoravicius, and A.~Sly.
\newblock Last passage percolation with a defect line and the solution of the
  slow bond problem.
\newblock {\em arXiv:1408.3464}, 2014.

\bibitem{Born08}
F.~Bornemann.
\newblock {On the numerical evaluation of Fredholm determinants}.
\newblock {\em Math. Comput.}, 79:871--915, 2009.

\bibitem{BFP08}
F.~Bornemann, P.L. Ferrari, and M.~Pr{\"a}hofer.
\newblock {The Airy$_1$ process is not the limit of the largest eigenvalue in
  GOE matrix diffusion}.
\newblock {\em J. Stat. Phys.}, 133:405--415, 2008.

\bibitem{BF07}
A.~Borodin and P.L. Ferrari.
\newblock {Large time asymptotics of growth models on space-like paths I:
  PushASEP}.
\newblock {\em Electron. J. Probab.}, 13:1380--1418, 2008.

\bibitem{BFP06}
A.~Borodin, P.L. Ferrari, and M.~Pr{\"a}hofer.
\newblock {Fluctuations in the discrete TASEP with periodic initial
  configurations and the Airy$_1$ process}.
\newblock {\em Int. Math. Res. Papers}, 2007:rpm002, 2007.

\bibitem{BFPS06}
A.~Borodin, P.L. Ferrari, M.~Pr{\"a}hofer, and T.~Sasamoto.
\newblock {Fluctuation properties of the TASEP with periodic initial
  configuration}.
\newblock {\em J. Stat. Phys.}, 129:1055--1080, 2007.

\bibitem{BFS07}
A.~Borodin, P.L. Ferrari, and T.~Sasamoto.
\newblock {Transition between Airy$_1$ and Airy$_2$ processes and TASEP
  fluctuations}.
\newblock {\em Comm. Pure Appl. Math.}, 61:1603--1629, 2008.

\bibitem{BG12}
A.~Borodin and V.~Gorin.
\newblock Lectures on integrable probability.
\newblock In {\em Probability and Statistical Physics in St. Petersburg},
  volume~91, pages 155--214. Proceedings of Symposia in Pure Mathematics, AMS,
  2016.

\bibitem{BP07}
A.~Borodin and S.~P\'ech\'e.
\newblock {Airy Kernel with Two Sets of Parameters in Directed Percolation and
  Random Matrix Theory}.
\newblock {\em J. Stat. Phys.}, 132:275--290, 2008.

\bibitem{BF20b}
O.~Busani and P.L. Ferrari.
\newblock Universality of the geodesic tree in last passage percolation.
\newblock {\em Ann. Probab.}, 50:90--130, 2022.

\bibitem{CFS16}
S.~Chhita, P.L. Ferrari, and H.~Spohn.
\newblock {Limit distributions for KPZ growth models with spatially homogeneous
  random initial conditions}.
\newblock {\em Ann. Appl. Probab.}, 28:1573--1603, 2018.

\bibitem{Cor11}
I.~Corwin.
\newblock {The Kardar-Parisi-Zhang equation and universality class}.
\newblock {\em Random Matrices: Theory Appl.}, 01:1130001, 2012.

\bibitem{CFP10b}
I.~Corwin, P.L. Ferrari, and S.~P{\'e}ch{\'e}.
\newblock {Universality of slow decorrelation in KPZ models}.
\newblock {\em Ann. Inst. H. Poincar\'e Probab. Statist.}, 48:134--150, 2012.

\bibitem{Dim20}
E.~Dimitrov.
\newblock {Two-point convergence of the stochastic six-vertex model to the Airy
  process}.
\newblock {\em arXiv:2006.15934}, 2020.

\bibitem{NIST:DLMF}
{\it NIST Digital Library of Mathematical Functions}.
\newblock http://dlmf.nist.gov/, Release 1.1.5 of 2022-03-15.
\newblock F.~W.~J. Olver, A.~B. {Olde Daalhuis}, D.~W. Lozier, B.~I. Schneider,
  R.~F. Boisvert, C.~W. Clark, B.~R. Miller, B.~V. Saunders, H.~S. Cohl, and
  M.~A. McClain, eds.

\bibitem{Fer07}
P.L. Ferrari.
\newblock {The universal Airy$_1$ and Airy$_2$ processes in the Totally
  Asymmetric Simple Exclusion Process}.
\newblock In J.~Baik, T.~Kriecherbauer, L-C. Li, K.~McLaughlin, and C.~Tomei,
  editors, {\em Integrable Systems and Random Matrices: In Honor of Percy
  Deift}, Contemporary Math., pages 321--332. Amer. Math. Soc., 2008.

\bibitem{Fer10b}
P.L. Ferrari.
\newblock {From interacting particle systems to random matrices}.
\newblock {\em J. Stat. Mech.}, page P10016, 2010.

\bibitem{FO17}
P.L. Ferrari and A.~Occelli.
\newblock {Universality of the GOE Tracy-Widom distribution for TASEP with
  arbitrary particle density}.
\newblock {\em Eletron. J. Probab.}, 23(51):1--24, 2018.

\bibitem{FO18}
P.L. Ferrari and A.~Occelli.
\newblock Time-time covariance for last passage percolation with generic
  initial profile.
\newblock {\em Math. Phys. Anal. Geom.}, 22:1, 2019.

\bibitem{FO22}
P.L. Ferrari and A.~Occelli.
\newblock Time-time covariance for last passage percolation in half-space.
\newblock {\em arXiv:2204.06782}, 2022.

\bibitem{FS10}
P.L. Ferrari and H.~Spohn.
\newblock {Random Growth Models}.
\newblock In G.~Akemann, J.~Baik, and P.~{Di Francesco}, editors, {\em The
  Oxford handbook of random matrix theory}, pages 782--801. Oxford Univ. Press,
  Oxford, 2011.

\bibitem{FS16}
P.L. Ferrari and H.~Spohn.
\newblock {On time correlations for KPZ growth in one dimension}.
\newblock {\em SIGMA}, 12:074, 2016.

\bibitem{HS20}
A.~Hammond and S.~Sarkar.
\newblock Modulus of continuity for polymer fluctuations and weight profiles in
  {P}oissonian last passage percolation.
\newblock {\em Electron. J. Probab.}, 25:38 pp., 2020.

\bibitem{Hoef40}
W.~Hoeffding.
\newblock {Masstabinvariante Korrelationstheorie}.
\newblock {\em Schriften Math. Inst. Univ. Berlin}, 5:181--233, 1940.

\bibitem{Jo00b}
K.~Johansson.
\newblock Shape fluctuations and random matrices.
\newblock {\em Comm. Math. Phys.}, 209:437--476, 2000.

\bibitem{Jo03b}
K.~Johansson.
\newblock Discrete polynuclear growth and determinantal processes.
\newblock {\em Comm. Math. Phys.}, 242:277--329, 2003.

\bibitem{Jo03}
K.~Johansson.
\newblock The arctic circle boundary and the {Airy} process.
\newblock {\em Ann. Probab.}, 33:1--30, 2005.

\bibitem{KPZ86}
M.~Kardar, G.~Parisi, and Y.Z. Zhang.
\newblock Dynamic scaling of growing interfaces.
\newblock {\em Phys. Rev. Lett.}, 56:889--892, 1986.

\bibitem{kemperman1977fkg}
J.H.B. Kemperman.
\newblock On the {FKG}-inequality for measures on a partially ordered space.
\newblock {\em Indagationes Mathematicae (Proceedings)}, 80(4):313--331, 1977.
\newblock North-Holland.

\bibitem{Kesten2003}
H.~Kesten.
\newblock First-passage percolation.
\newblock In P.~Picco and J.~San Martin, editors, {\em From Classical to Modern
  Probability: CIMPA Summer School 2001}, pages 93--143. Birkh{\"a}user Basel,
  Basel, 2003.

\bibitem{Lan00}
L.J. Landau.
\newblock Bessel functions: monotonicity and bounds.
\newblock {\em J. London Math. Soc.}, 61:197--215, 2000.

\bibitem{LR10}
M.~Ledoux and B.~Rider.
\newblock Small deviations for beta ensembles.
\newblock {\em Electron. J. Probab.}, 15:1319--1343, 2010.

\bibitem{Pim16}
L.P.R. Pimentel.
\newblock Duality between coalescence times and exit points in last-passage
  percolation models.
\newblock {\em Ann. Probab.}, 44(5):3187--3206, 2016.

\bibitem{Pim17}
L.P.R. Pimentel.
\newblock {Local Behavior of Airy Processes}.
\newblock {\em J. Stat. Phys.}, 173:1614--1638, 2018.

\bibitem{PS02}
M.~Pr{\"a}hofer and H.~Spohn.
\newblock Scale invariance of the {PNG} droplet and the {A}iry process.
\newblock {\em J. Stat. Phys.}, 108:1071--1106, 2002.

\bibitem{Qua11}
J.~Quastel.
\newblock {Introduction to KPZ}.
\newblock {\em Current Developments in Mathematics}, pages 125--194, 2011.

\bibitem{QS15}
J.~Quastel and H.~Spohn.
\newblock The one-dimensional kpz equation and its universality class.
\newblock {\em J. Stat. Phys.}, 160:965--984, 2015.

\bibitem{RS78III}
M.~Reed and B.~Simon.
\newblock {\em Methods of Modern Mathematical Physics {III}: {S}cattering
  theory}.
\newblock Academic {P}ress, New York, 1978.

\bibitem{Sas05}
T.~Sasamoto.
\newblock Spatial correlations of the {1D KPZ} surface on a flat substrate.
\newblock {\em J. Phys. A}, 38:L549--L556, 2005.

\bibitem{Sch12}
G.~Schehr.
\newblock {Extremes of $N$ vicious walkers for large $N$: application to the
  directed polymer and KPZ interfaces}.
\newblock {\em J. Stat. Phys.}, 149:385--410, 2012.

\bibitem{Sim00}
B.~Simon.
\newblock {\em Trace Ideals and Their Applications}.
\newblock American Mathematical Society, second edition edition, 2000.

\bibitem{Tak16}
K.A. Takeuchi.
\newblock {An appetizer to modern developments on the Kardar--Parisi--Zhang
  universality class}.
\newblock {\em Physica A}, 504:77--105, 2016.

\bibitem{Wid03}
H.~Widom.
\newblock On asymptotic for the {Airy} process.
\newblock {\em J. Stat. Phys.}, 115:1129--1134, 2004.

\bibitem{Zha20}
L.~Zhang.
\newblock Optimal exponent for coalescence of finite geodesics in exponential
  last passage percolation.
\newblock {\em Electron. Commun. Probab.}, 25:14 pp., 2020.

\bibitem{Zyg18}
N.~Zygouras.
\newblock {Some algebraic structures in the KPZ universality}.
\newblock {\em arXiv:1812.07204}, 2018.

\end{thebibliography}

\end{document}